\documentclass[11pt,a4paper]{article}

\usepackage{epsf,epsfig,amsfonts,amsgen,amsmath,amstext,amsbsy,amsopn,amsthm,cases,listings,color
}
\usepackage{ebezier,eepic}
\usepackage{color}
\usepackage{multirow}
\usepackage{epstopdf}
\usepackage{graphicx}   
\usepackage{pgf,tikz}
\usepackage{mathrsfs}
\usepackage[marginal]{footmisc}
\usepackage{enumitem}
\usepackage[titletoc]{appendix}
\usepackage{booktabs}
\usepackage{url}
\usepackage{mathtools}

\usepackage{pgfplots}
\usepackage{authblk}
\usepackage{amssymb}

\usepackage{wasysym}

\usepackage{empheq}

\usepackage{dsfont}

\usepackage{tikz}
\usepackage{longtable}

\usepackage{subfigure}
\pgfplotsset{compat=1.18}
\usepackage{mathrsfs}
\usepackage{wasysym} 
\usetikzlibrary{arrows}
\usepackage{aligned-overset}
\usepackage{bm}
\usepackage{bbm}
\usepackage[backref=page]{hyperref}


\usepackage{float}

\allowdisplaybreaks[1]

\definecolor{uuuuuu}{rgb}{0.27,0.27,0.27}
\definecolor{sqsqsq}{rgb}{0.1255,0.1255,0.1255}

\setlength{\textwidth}{150mm} \setlength{\oddsidemargin}{7mm}
\setlength{\evensidemargin}{7mm} \setlength{\topmargin}{-5mm}
\setlength{\textheight}{245mm} \topmargin -18mm

\newtheorem{definition}{Definition} [section]
\newtheorem{theorem}[definition]{Theorem}
\newtheorem{lemma}[definition]{Lemma}
\newtheorem{proposition}[definition]{Proposition}

\newtheorem{claim}[definition]{Claim}

\newtheorem{fact}[definition]{Fact}


\newcommand{\multiset}[1]{\{\hspace{-0.25em}\{\hspace{0.1em}#1\hspace{0.1em}\}\hspace{-0.25em}\}}

\setlength{\parindent}{0pt}
\parskip=8pt
\begin{document}
\title{\bf\Large Spectral generalized Tur\'{a}n problems}
\date{\today}
\author[ ]{Xizhi Liu\thanks{Research supported by the Excellent Young Talents Program (Overseas) of the National Natural Science Foundation of China. Email: \texttt{liuxizhi@ustc.edu.cn}}}
\affil[ ]{School of Mathematical Sciences, 
            USTC,
            Hefei, 230026, China}
\maketitle
\begin{abstract}
Combining two well-studied variants of the classical Tur\'{a}n problem, the generalized Tur\'{a}n problem and the spectral Tur\'{a}n problem, we introduce the spectral generalized Tur\'{a}n problem and establish a general theorem that extends the result of Keevash--Lenz--Mubayi~\cite{KLM14} on the spectral Tur\'{a}n problem in this broader setting.   
As a quick application, we obtain the spectral Erd\H{o}s Pentagon Theorem.  
We also introduce the notion of entropic density for generalized Tur\'{a}n problems, and show that it coincides with the generalized spectral radius, extending a recent result of Chao--Hans on entropic Tur\'{a}n density.
\end{abstract}
\section{Introduction}\label{SEC:Introduction}
For an integer $r\ge 2$, an \textbf{$r$-uniform hypergraph} (henceforth $r$-graph) $\mathcal{H}$ is a collection of $r$-subsets of some finite set $V$.
We identify a hypergraph $\mathcal{H}$ with its edge and use $|\mathcal{H}|$ to denote the number of edges in it. 
The vertex set of $\mathcal{H}$ is denoted by $V(\mathcal{H})$, and its size is denoted by $v(\mathcal{H})$.

Given a family $\mathcal{F}$ of $r$-graphs, an $r$-graph $\mathcal{H}$ is $\mathcal{F}$-free if it does not contain any member of $\mathcal{F}$ as a subgraph. 
Following the seminal work of Tur\'{a}n~\cite{TU41}, a central problem in extremal combinatorics is to determine the maximum number of edges in an $\mathcal{F}$-free $r$-graph on $n$ vertices, known as the Tur\'{a}n number of $\mathcal{F}$:
\begin{align*}
    \mathrm{ex}(n,\mathcal{F})
    & \coloneqq \max\left\{ |\mathcal{H}| \colon \text{$v(\mathcal{H}) = n$ and $\mathcal{H}$ is $\mathcal{F}$-free}\right\}.
\end{align*}
An extensively studied variant of the Tur\'{a}n problem is the generalized Tur\'{a}n problem, initiated by Erd\H{o}s~\cite{Erdos62} and studied systematically by Alon--Shikhelman~\cite{AS16}. 
This line of research investigates the maximum number of copies of a fixed $r$-graph $Q$, rather than edges, in an $\mathcal{F}$-free $r$-graph on $n$ vertices.
More specifically, let $\mathrm{N}(Q,\mathcal{H})$ denote the number of copies\footnote{The precise definition will be given in the next subsection.} of $Q$ in $\mathcal{H}$. The generalized Tur\'{a}n problem concerns the following function:
\begin{align*}
    \mathrm{ex}(n, Q, \mathcal{F})
    & \coloneqq \max\left\{ \mathrm{N}(Q,\mathcal{H}) \colon \text{$v(\mathcal{H}) = n$ and $\mathcal{H}$ is $\mathcal{F}$-free}\right\}.
\end{align*}
Another extensively studied variant is the spectral Tur\'{a}n problem, proposed by Nikiforov (see~\cite{BS86,Nik07,Nik10}), which concerns the maximum spectral radius of an $n$-vertex $\mathcal{F}$-free $r$-graph.
Formally, given an $r$-graph $\mathcal{H}$ on the vertex set $[n]$, the Lagrangian polynomial\footnote{Also referred to as the polynomial form of $\mathcal{H}$ in Nikiforov’s survey~\cite{Nik14}.} of $\mathcal{H}$ is defined by
\begin{align}\label{equ:Lagrangian-polynomial}
    P_{\mathcal{H}}(X_1, \ldots, X_n)
    \coloneqq r! \sum_{e \in \mathcal{H}} \, \prod_{i \in e} X_i.
\end{align}
For a real number $\alpha > 0$, define the domain
\begin{align*}
    \Delta^{n-1}_{\alpha}
    \coloneqq 
    \left\{(x_1, \ldots, x_{n}) \in \mathbb{R}^n \colon |x_{1}|^{\alpha} + \cdots + |x_{n}|^{\alpha} = 1 \right\}.
\end{align*}
The \textbf{$\alpha$-spectral radius} of $\mathcal{H}$ is then given by  
\begin{align*}
    \lambda_{\alpha}(\mathcal{H})
    \coloneqq \max\left\{P_{\mathcal{H}}(x_1, \ldots, x_{n}) \colon (x_1, \ldots, x_{n}) \in \Delta^{n-1}_{\alpha} \right\}. 
\end{align*}
The spectral Tur\'{a}n problem asks for the value of the following function:
\begin{align*}
    \mathrm{spex}_{\alpha}(n,\mathcal{F})
    & \coloneqq \max\left\{\lambda_{\alpha}(\mathcal{H}) \colon \text{$v(\mathcal{H}) = n$ and $\mathcal{H}$ is $\mathcal{F}$-free}\right\}. 
\end{align*}

In this work, we initiate the study of the spectral generalized Tur\'{a}n problem and extend the general result of Keevash--Lenz--Mubayi~\cite{KLM14} on spectral Tur\'{a}n problem to the spectral generalized Tur\'{a}n problem setting. 
To provide a gentle introduction, we begin with a classical example from the generalized Tur\'{a}n problem.

\subsection{Example: the spectral Erd\H{o}s Pentagon Problem}
Let $C_5$ denote the graph on the vertex set $\{1,2,3,4,5\}$ with edge set 
\begin{align*}
    \big\{\{1,2\}, \{2,3\}, \{3,4\}, \{4,5\}, \{5,1\}\big\}.
\end{align*}
A graph $G$ is said to be \textbf{$C_5$-colorable} if there exists a homomorphism from $G$ to $C_5$, that is, a map $\phi \colon V(G) \to V(C_5)$ such that $\phi(e) \in C_5$ for every $e \in G$. 
Let $\mathfrak{C}_{5}$ denote the collection of all $C_{5}$-colorable graphs. 

In 1984, Erd\H{o}s~\cite{Erd84} conjectured that for every $n \in \mathbb{N}$, the triangle-free graph on $n$ vertices that contains the maximum number of copies of $C_5$ is $C_5$-colorable. 
Building on Razborov’s powerful flag algebra machinery~\cite{Raz07}, his conjecture was resolved for large $n$ independently by Grzesik~\cite{Gre12} and Hatami--Hladk\'{y}--Kr\'{a}\v{l}--Norine--Razborov~\cite{HHKNR13}, and later for all $n$ by Lidick\'{y}--Pfender~\cite{LP18}.  

\begin{theorem}[\cite{Gre12,HHKNR13,LP18}]\label{THM:Pentagon-exact}
    For every $n \in \mathbb{N}$,  
    \begin{align*}
        \mathrm{ex}(n,C_5,K_{3})
        = \max\left\{\mathrm{N}(C_5, G) \colon \text{$G \in \mathfrak{C}_5$ and $v(G) = n$}\right\}.
    \end{align*}
\end{theorem}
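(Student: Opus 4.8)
The plan is to follow the flag-algebra-plus-stability route that underlies the three cited papers. Set $b(n)\coloneqq\max\{\mathrm{N}(C_5,G)\colon G\in\mathfrak{C}_5,\ v(G)=n\}$. A short smoothing argument shows that this maximum is attained by the balanced blow-up $C_5[n/5]$ (part sizes as equal as possible), with $b(n)=(1+o(1))\,n^5/3125$. Since every member of $\mathfrak{C}_5$ is a subgraph of a blow-up of $C_5$ and hence triangle-free, the inequality $\mathrm{ex}(n,C_5,K_3)\ge b(n)$ is immediate, and the entire difficulty lies in the matching upper bound.

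\textbf{Step 1: the asymptotic bound.} I would first prove $\mathrm{ex}(n,C_5,K_3)\le(1+o(1))\,n^5/3125$ by Razborov's flag algebra calculus: express the $C_5$-density as a linear functional on the cone of triangle-free graph limits and exhibit a sum-of-squares certificate — found numerically via semidefinite programming and then rationalized to an exact rational certificate — witnessing that this functional is at most $10/3125$ on that cone. Equivalently, in graphon language, the target is $t(C_5,W)\le 10/3125$ for every triangle-free graphon $W$, with equality only for the blow-up graphon of $C_5$.

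\textbf{Step 2: stability.} Next I would upgrade this to the statement that for every $\delta>0$ there is $\varepsilon>0$ such that any triangle-free $n$-vertex graph $G$ with $\mathrm{N}(C_5,G)\ge(1-\varepsilon)\,n^5/3125$ can be turned into a blow-up of $C_5$ by adding and deleting at most $\delta n^2$ edges. This follows from the uniqueness part of Step~1 by the usual compactness argument: a hypothetical sequence of counterexamples would converge to a triangle-free graphon of maximum $C_5$-density that is not the $C_5$-blow-up graphon, a contradiction.

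\textbf{Step 3: from approximate to exact.} This is where I expect the real work. Let $G$ be extremal on $n$ vertices; by Step~2 there is a partition $V(G)=V_1\cup\cdots\cup V_5$ with $G$ within $o(n^2)$ edges of $C_5[V_1,\dots,V_5]$, which pins down almost all degrees and codegrees. I would then run a vertex-local symmetrization: for each $v$, compare its present contribution to $\mathrm{N}(C_5,G)$ with its contribution after relocating it to the part that, given the rest of $G$, is locally optimal for $v$; using the stability estimates to bound the interaction with the remainder of the graph, one shows such a move never decreases $\mathrm{N}(C_5,\cdot)$, never creates a triangle, and is \emph{strictly} better whenever $v$ still has a non-conforming edge. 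Iterating to a fixed point leaves a subgraph of a blow-up of $C_5$, i.e. a member of $\mathfrak{C}_5$, proving $\mathrm{ex}(n,C_5,K_3)=b(n)$ for all large $n$; the small values of $n$ below the stability threshold must be handled separately, either by a direct finite check or, as in Lidick\'y--Pfender, by feeding the explicit extremal construction into the SDP so that the bound is tight for every $n$ simultaneously (I would aim to reuse their certificate). The main obstacle is exactly this last step together with the ``all $n$'' requirement: a single symmetrization move could a priori destroy many copies of $C_5$ elsewhere, and it is the quantitative control from Steps~1--2 that rules this out.
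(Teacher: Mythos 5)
This is a cited result --- Theorem~\ref{THM:Pentagon-exact} is attributed to Grzesik~\cite{Gre12}, Hatami--Hladk\'{y}--Kr\'{a}\v{l}--Norine--Razborov~\cite{HHKNR13}, and Lidick\'{y}--Pfender~\cite{LP18} --- and the paper gives no proof of its own, so there is nothing in the source to compare your proposal against line by line. What you have written is an accurate route map for how those references proceed (flag-algebra/SDP bound, stability, local cleanup or a refined certificate for exactness), and you correctly identify where the genuine difficulty sits. But a route map is not a proof: the entire mathematical content of Step~1 is the exact rational SDP certificate, which you gesture at but do not produce; Step~2 depends on a uniqueness statement for the maximizing graphon that itself has to be extracted from the certificate's slackness structure, not merely from compactness; and your Step~3 symmetrization, as described, is not what Lidick\'{y}--Pfender actually do --- their ``all $n$'' result comes from a strengthened SDP formulation, not from a vertex-relocation argument, so the claim that a single move ``never decreases $\mathrm{N}(C_5,\cdot)$ and never creates a triangle'' would need its own proof and is not obviously true without the quantitative input you defer to Steps~1--2.

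One smaller gap worth flagging: your opening ``short smoothing argument'' that $b(n)$ is attained by the balanced blow-up $C_5[n/5]$ needs two separate observations, not one. For \emph{full} blow-ups with part sizes $a_1,\dots,a_5$ the count is $\prod a_i$ and AM--GM balances the parts; but $\mathfrak{C}_5$ also contains proper subgraphs of blow-ups, so you must additionally note that $\mathrm{N}(C_5,\cdot)$ is monotone under adding back missing edges inside a fixed $C_5$-coloring, which forces the maximizer to be a full blow-up before AM--GM applies. Neither step is hard, but as written the claim skips the monotonicity half. None of this is a defect in your understanding of the literature --- it is simply that the statement is quoted, not proved, in this paper, and your sketch would need the underlying computations to become a proof.
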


We now introduce the spectral version of Theorem~\ref{THM:Pentagon-exact}. 

Given a graph $G$, and for convenience assuming that its vertex set is $[n]$, we define the \textbf{Lagrangian $C_5$-polynomial} of $G$ as
\begin{align*}
    P_{C_5, G}(X_1, \ldots, X_{n})
    \coloneqq \sum_{\phi \in \mathrm{Inj}(C_5, G)} \, \prod_{i\in \phi(V(C_5))} X_{i}, 
\end{align*}
where $\mathrm{Inj}(C_5, G)$ is the collection of all injective homomorphisms from $C_5$ to $G$.

For every $\alpha \ge 1$, the \textbf{$(\alpha, C_5)$-spectral radius} of $G$ is defined as
\begin{align*}
    \lambda_{\alpha, C_5}(G)
    \coloneqq \max\left\{P_{C_5, G}(x_1, \ldots, x_{n}) \colon (x_1, \ldots, x_{n}) \in \Delta^{n-1}_{\alpha} \right\}. 
\end{align*}
The spectral analogue of the Erd\H{o}s Pentagon Theorem is as follows. 
\begin{theorem}\label{THM:C5-K3-spectral}
    Let $\alpha > 1$ be a real number. 
    There exists a constant $N_{\ref{THM:C5-K3-spectral}}$ such that for every $n \ge N_{\ref{THM:C5-K3-spectral}}$, every $K_{3}$-free graph $G$ on $n$ vertices satisfies 
    \begin{align*}
        \lambda_{\alpha}(G)
        \le \max\left\{ \lambda_{\alpha}(H) \colon \text{$H \in \mathfrak{C}_5$ and $v(H) = n$}\right\},
    \end{align*}
    with equality only if $G \in \mathfrak{C}_5$. 
\end{theorem}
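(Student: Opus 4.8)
Write $\lambda_{\alpha,C_5}$ for the $(\alpha,C_5)$-spectral radius in the statement, fix the real number $\alpha>1$, and let $G$ be a $K_3$-free graph on $[n]$ that maximises $\lambda_{\alpha,C_5}$, attained at a nonnegative vector $\mathbf x=(x_1,\dots,x_n)$ with $\sum_i x_i^{\alpha}=1$; set $\lambda\coloneqq P_{C_5,G}(\mathbf x)$. Evaluating $P_{C_5,\cdot}$ at the uniform vector on a balanced blow-up $C_5[\lfloor n/5\rfloor]$ (padded with isolated vertices) gives $\lambda=\Omega(n^{5-5/\alpha})$; the exponent is positive precisely because $\alpha>1$, so large graphs win, and this is the first use of the hypothesis. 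Note also that every member of $\mathfrak C_5$ is $K_3$-free, since $K_3$ has no homomorphism to $C_5$; hence the asserted inequality goes in the nontrivial direction, and the real content is that \emph{every} $K_3$-free maximiser of $\lambda_{\alpha,C_5}$ lies in $\mathfrak C_5$.

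The first main step is to establish that $\mathbf x$ is asymptotically uniform. As $P_{C_5,G}$ is multilinear and homogeneous of degree $5$, Euler's identity together with the Lagrange/KKT conditions give $\partial_iP_{C_5,G}(\mathbf x)=5\lambda\,x_i^{\alpha-1}$ for every $i$ in the support, where $\partial_iP_{C_5,G}(\mathbf x)$ is the $\mathbf x$-weighted number of copies of $C_5$ through $i$. The crude bound $\partial_iP_{C_5,G}(\mathbf x)=O\!\big(\|\mathbf x\|_1^4\big)=O(n^{4-4/\alpha})$ (H\"older: $\|\mathbf x\|_1\le n^{1-1/\alpha}$) combined with $\lambda=\Omega(n^{5-5/\alpha})$ forces $x_i\le C_1 n^{-1/\alpha}$ for all $i$, so $\mathrm{supp}(\mathbf x)$ has $\Omega(n)$ vertices. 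I would then show $G$ has no zero-weight vertex --- deleting one and reinserting a lightly weighted new vertex as a non-adjacent twin of a vertex of maximum weighted $C_5$-degree preserves $K_3$-freeness and, since $\alpha>1$, produces a linear-in-$t$ gain that beats the $O(t^{\alpha})$ loss --- and, by the analogous twin-relocation keeping weights, that no $v$ has $x_v\le(1-\delta)\max_i x_i$ for a fixed $\delta>0$: that move changes the objective by $x_v\big(5\lambda(x_u^{\alpha-1}-x_v^{\alpha-1})-x_v\gamma\big)$, and the gain $5\lambda(x_u^{\alpha-1}-x_v^{\alpha-1})=\Omega_{\delta}(n^{4-4/\alpha})$ dominates the error $x_v\gamma=O(n^{3-4/\alpha})$, where $\gamma=O(n^{3-3/\alpha})$ is the weighted count of copies of $C_5$ through both $u$ and $v$. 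Hence $x_i=(1+o(1))n^{-1/\alpha}$ for all $i$.

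Uniformity converts this into a counting statement: $P_{C_5,G}(\mathbf x)=(1+o(1))\,|\mathrm{Inj}(C_5,G)|\,n^{-5/\alpha}$, so the blow-up lower bound together with the fact that $\mathrm{ex}(n,C_5,K_3)=(1+o(1))\,\mathrm N\!\big(C_5,C_5[\lfloor n/5\rfloor]\big)$ --- which follows from Theorem~\ref{THM:Pentagon-exact} and the elementary fact that a product with fixed sum is maximised when balanced --- yields $\mathrm N(C_5,G)\ge(1-o(1))\,\mathrm{ex}(n,C_5,K_3)$. At this point I would invoke the stability version of Theorem~\ref{THM:Pentagon-exact} (established alongside, or extractable from, the flag-algebra proofs of Grzesik and of Hatami--Hladk\'{y}--Kr\'{a}\v{l}--Norine--Razborov) to obtain a partition $V(G)=V_1\cup\dots\cup V_5$ with $|V_i|=(1+o(1))n/5$ under which $G$ differs from the complete blow-up $C_5[V_1,\dots,V_5]$ in only $o(n^2)$ pairs. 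The remaining step is to remove all residual errors: calling an edge of $G$ that is not an edge of this blow-up a \emph{bad edge}, one uses the exact Lagrange conditions (which force every vertex to have the same weighted $C_5$-degree up to a factor $1+o(1)$) and the twin-relocation once more to show that each bad edge either lies in no copy of $C_5$ --- and may then be deleted without changing $\lambda_{\alpha,C_5}$, since in a $K_3$-free graph a copy of $C_5$ through a bad edge at an otherwise ``ideal'' vertex would have a chord --- or else comes with a missing blow-up edge, whose $\Theta(n^{3-4/\alpha})$ contribution to a vertex's weighted $C_5$-degree again beats the relocation error. After setting aside the $o(n)$ vertices incident to abnormally many bad edges, this shows $G$ has no bad edges at all, so $v\mapsto i\ (v\in V_i)$ is a homomorphism $G\to C_5$ and $G\in\mathfrak C_5$; this gives both the inequality and the conclusion that equality forces $G\in\mathfrak C_5$, the same argument applying to any $K_3$-free maximiser. (All of this is precisely the $(C_5,K_3)$-instance of the general spectral generalized Tur\'an theorem of this paper, whose hypotheses --- a blow-up construction and its counting stability --- have just been verified.)

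The main obstacle is this last step. In contrast with the ordinary edge-count spectral Tur\'an problem, deleting an edge can only \emph{decrease} $\lambda_{\alpha,C_5}$, so one cannot make progress by deleting bad edges directly; every gain must come from vertex-relocation moves whose magnitude is of order $n^{4-4/\alpha}$, while the error terms --- governed by copies of $C_5$ meeting two fixed vertices and by the $o(n^2)$ residual discrepancies from the stability step --- lie only one factor of $n$ below. Keeping these competing quantities under control uniformly, and correctly disposing of the bounded number of atypical high-bad-degree vertices, is what makes the argument delicate and what forces the threshold $N_{\ref{THM:C5-K3-spectral}}$.
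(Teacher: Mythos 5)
Your proposal takes a genuinely different route from the paper, and it has a real gap. The paper proves Theorem~\ref{THM:C5-K3-spectral} as a short black-box application of Theorem~\ref{THM:generalized-spectral}: it checks $(\delta,M,C_5)$-smoothness directly from Theorem~\ref{THM:Pentagon-exact}, it \emph{cites} the required $(\varepsilon,N',C_5)$-degree stability of $K_3$ with respect to $\mathfrak{C}_5$ from~\cite[Theorem~2.3]{CL24}, and it verifies the spectral-balance condition by mapping each $G\in\mathfrak{C}_5$ to the $5$-partite $5$-graph $\mathcal{H}_G$ and invoking Lemma~\ref{LEMMA:spectral-balanced-Turan-r-graph} together with Theorem~\ref{THM:KNY_spectral-upper-bound-k-partite-graph}. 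All of the iterative ``light-vertex deletion'' machinery that drives the result lives inside the already-proved Theorem~\ref{THM:generalized-spectral}, so the proof of Theorem~\ref{THM:C5-K3-spectral} itself is just hypothesis-checking.

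Two concrete problems with your sketch. First, your concluding parenthetical asserts that ``the hypotheses --- a blow-up construction and its counting stability --- have just been verified,'' but these are not the hypotheses of Theorem~\ref{THM:generalized-spectral}: the theorem requires \emph{degree} stability (Definition~\ref{DEF:generalized-spectral}\ref{DEF:Q-degree-stable}), i.e.\ that any $K_3$-free graph whose minimum weighted $C_5$-degree is close to $5\hat\pi n^4$ is already $C_5$-colorable, which is a strictly stronger statement than the copy-count/edit-distance stability you invoke and is exactly the nontrivial external input the paper cites from~\cite{CL24}. You cannot get degree stability for free from copy-count stability. Second, even read as a self-contained proof bypassing Theorem~\ref{THM:generalized-spectral}, the ``clean-up'' step does not close. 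You concede it is the main obstacle, and your sketch for it is not an argument: the claim that a bad edge at an ``ideal'' vertex either lies in no $C_5$ or is paired with a missing blow-up edge of compensating weight is asserted rather than derived, and deleting an edge in no $C_5$ does not make the target homomorphism to $C_5$ valid, it merely leaves the spectral radius unchanged while the graph is still not in $\mathfrak{C}_5$. You would in effect be re-deriving the hard direction of~\cite{CL24}, which is precisely what the paper avoids. The earlier parts of your sketch (near-uniformity of the optimal vector via Lagrange conditions and H\"older, and the passage to a near-extremal $C_5$-count) are sound and parallel Lemmas~\ref{LEMMA:Lagrange-multiplier},~\ref{LEMMA:minimum-degree-weight}, and Fact~\ref{FACT:derivative-Lagrange-poly}, but they do not carry you across the stability-to-exactness gap on their own.
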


\subsection{General definitions and main result}
In this subsection, we introduce the general definitions for the spectral generalized Tur\'{a}n problems and state the main result.

Given two $r$-graphs $Q$ and $\mathcal{H}$, a map $\phi \colon V(Q) \to V(\mathcal{H})$ is called a \textbf{homomorphism} from $Q$ to $\mathcal{H}$ if $\phi(e) \in \mathcal{H}$ for every edge $e \in Q$.
Let $\mathrm{Inj}(Q,\mathcal{H})$ be the set of all injective homomorphisms\footnote{For clarity, we assume that every $r$-graph is vertex-labeled, with distinct vertices assigned distinct labels.} from $Q$ to $\mathcal{H}$, and write $\mathrm{inj}(Q,\mathcal{H}) \coloneqq |\mathrm{Inj}(Q,\mathcal{H})|$. 
The number of copies of $Q$ in $\mathcal{H}$ is given by
\begin{align}\label{equ:def-number-Q}
    \mathrm{N}(Q, \mathcal{H})
    \coloneqq \mathrm{inj}(Q, \mathcal{H})/{|\mathrm{Aut}(Q)|}, 
\end{align}
where $\mathrm{Aut}(Q) \coloneqq \mathrm{Inj}(Q, Q)$ denotes the automorphism group of $Q$.

Following the definition in~\cite{CL24}, we define the \textbf{Lagrangian $Q$-polynomial} of a hypergraph $\mathcal{H}$ as
\begin{align*}
    P_{Q,\mathcal{H}}(X_1, \ldots, X_{n})
    \coloneqq \sum_{\phi \in \mathrm{Inj}(Q,\mathcal{H})} \prod_{i\in \phi(V(Q))}X_i. 
\end{align*}
Note that in the special case where $Q = K_r^r$, this reduces to the classical Lagrangian polynomial given in~\eqref{equ:Lagrangian-polynomial}, which was first introduced by Motzkin--Straus~\cite{MS65} for graphs and later extended to hypergraphs by Frankl--R\"{o}dl~\cite{FR84}. 

Define the \textbf{$(\alpha, Q)$-spectral radius} of a hypergraph $\mathcal{H}$ by
\begin{align*}
    \lambda_{\alpha, Q}(\mathcal{H})
    \coloneqq \max\left\{P_{Q,\mathcal{H}}(x_1, \ldots, x_{n}) \colon (x_1, \ldots, x_{n}) \in \Delta^{n-1}_{\alpha} \right\}. 
\end{align*}
The \textbf{$(\alpha, Q)$-spectral Tur\'{a}n number} of $\mathcal{F}$ is then defined as 
\begin{align*}
    \mathrm{spex}_{\alpha}(n,Q,\mathcal{F})
    & \coloneqq \max\left\{\lambda_{\alpha, Q}(\mathcal{H}) \colon \text{$v(\mathcal{H}) = n$ and $\mathcal{H}$ is $\mathcal{F}$-free}\right\}.  
\end{align*}
%


Suppose that the number of vertices in $Q$ is $q$.
For our purposes, it will be convenient to work with the following definition instead of $\mathrm{ex}(n, Q, \mathcal{F})$: 
\begin{align*}
    \mathrm{inj}(n,Q,\mathcal{F})
    \coloneqq \max\left\{\mathrm{inj}(Q,\mathcal{H}) \colon  \text{$v(\mathcal{H}) = n$ and $\mathcal{H}$ is $\mathcal{F}$-free} \right\}. 
\end{align*}
Define the limit  
\begin{align*}
    \hat{\pi}(Q,\mathcal{F})
    \coloneqq \lim_{n \to \infty} {\mathrm{inj}(n,Q,\mathcal{F})}/{n^{q}}. 
\end{align*}
Note from~\eqref{equ:def-number-Q} that $\mathrm{ex}(n,Q,\mathcal{F}) = {\mathrm{inj}(n,Q,\mathcal{F})}/{|\mathrm{Aut}(Q)|}$. Thus 
\begin{align*}
    \hat{\pi}(Q,\mathcal{F})
    = q! \cdot |\mathrm{Aut}(Q)| \cdot \pi(Q,\mathcal{F}), 
    \quad\text{where}\quad 
    \pi(Q,\mathcal{F}) 
    \coloneqq \lim_{n \to \infty} {\mathrm{ex}(n,Q,\mathcal{F})}/{\binom{n}{q}}. 
\end{align*}
By substituting the vector $(n^{-1/\alpha}, \ldots, n^{-1/\alpha}) \in \Delta_{\alpha}^{n-1}$ into the polynomial $P_{Q,\mathcal{H}}$, where $\mathcal{H}$ is an $r$-graph on $n$-vertices, we obtain the following simple inequalities.
\begin{fact}\label{FACT:lower-bound-spex}
    Let $\alpha \ge 1$ be a real number and $r \ge 2$  be an integer. 
    Let $Q$ be an $r$-graph on $q$ vertices. 
    Then, for every $n \in \mathbb{N}$ and every $n$-vertex $r$-graph $\mathcal{H}$, 
    \begin{align*}
        \lambda_{\alpha, Q}(\mathcal{H}) \ge |\mathcal{H}| n^{-\frac{q}{\alpha}}. 
    \end{align*}
    In particular, for every family $\mathcal{F}$ of $r$-graphs, 
    \begin{align*}
        \mathrm{spex}_{\alpha}(n,Q,\mathcal{F})
        \ge {\mathrm{inj}(n,Q,\mathcal{F})} \cdot n^{-\frac{q}{\alpha}}.
    \end{align*}
\end{fact}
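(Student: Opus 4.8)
The plan is to test the optimization defining $\lambda_{\alpha,Q}(\mathcal{H})$ against the single explicit point $\mathbf{x} = (n^{-1/\alpha},\ldots,n^{-1/\alpha})$, exactly as suggested by the paragraph preceding the statement. First I would verify that $\mathbf{x}$ is a legitimate test point, i.e. that $\mathbf{x}\in\Delta^{n-1}_{\alpha}$: this is immediate since $\sum_{i=1}^{n}\lvert n^{-1/\alpha}\rvert^{\alpha} = n\cdot n^{-1} = 1$.

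Next I would evaluate $P_{Q,\mathcal{H}}$ at $\mathbf{x}$. Every $\phi\in\mathrm{Inj}(Q,\mathcal{H})$ is injective, so its image $\phi(V(Q))$ is a set of exactly $q$ vertices of $\mathcal{H}$; hence $\prod_{i\in\phi(V(Q))}x_i = \bigl(n^{-1/\alpha}\bigr)^{q} = n^{-q/\alpha}$, a quantity that does not depend on $\phi$. Summing over the $\mathrm{inj}(Q,\mathcal{H})$ terms gives
\[
    P_{Q,\mathcal{H}}(\mathbf{x}) \;=\; \mathrm{inj}(Q,\mathcal{H})\cdot n^{-q/\alpha}.
\]
Since $\lambda_{\alpha,Q}(\mathcal{H})$ is the maximum of $P_{Q,\mathcal{H}}$ over $\Delta^{n-1}_{\alpha}$, and $\mathbf{x}$ lies in this domain, the pointwise bound $\lambda_{\alpha,Q}(\mathcal{H})\ge \mathrm{inj}(Q,\mathcal{H})\cdot n^{-q/\alpha}$ follows at once. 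The ``in particular'' clause is then obtained by taking the maximum over all $n$-vertex $\mathcal{F}$-free $r$-graphs $\mathcal{H}$ on both sides: the left-hand side becomes $\mathrm{spex}_{\alpha}(n,Q,\mathcal{F})$ and the maximum of $\mathrm{inj}(Q,\mathcal{H})$ over such $\mathcal{H}$ is $\mathrm{inj}(n,Q,\mathcal{F})$, both by definition.

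I do not expect any genuine obstacle: the argument is a one-line substitution together with the elementary observation that an injective map out of a $q$-vertex set has image of size exactly $q$. The hypotheses $\alpha\ge 1$ and $r\ge 2$ play no role in the computation itself — they only serve to fix $\Delta^{n-1}_{\alpha}$ as the domain over which the maximum defining $\lambda_{\alpha,Q}$ is taken.
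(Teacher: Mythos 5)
Your proof is correct and is precisely the substitution the paper itself indicates in the sentence immediately preceding the statement, so there is no genuine difference in approach. One small thing worth flagging: the first displayed inequality of the Fact as printed reads $\lambda_{\alpha,Q}(\mathcal{H}) \ge |\mathcal{H}|\,n^{-q/\alpha}$, whereas your computation (correctly) yields $\lambda_{\alpha,Q}(\mathcal{H}) \ge \mathrm{inj}(Q,\mathcal{H})\,n^{-q/\alpha}$; these two agree only in special cases, and $\mathrm{inj}(Q,\mathcal{H}) \ge |\mathcal{H}|$ is false in general (e.g.\ when $\mathcal{H}$ contains no copy of $Q$). Everything downstream — the ``in particular'' clause, the subsequent bound on $\mathrm{ex}(n,Q,\mathcal{F})$, and the use of this Fact in Lemma~\ref{LEMMA:spectral-balanced-Turan-r-graph} (where it produces $q!\,|T_{m,n}^q|\,n^{-q/\alpha} = \mathrm{inj}(K_q^q,T_{m,n}^q)\,n^{-q/\alpha}$) — is consistent with $\mathrm{inj}(Q,\mathcal{H})$ rather than $|\mathcal{H}|$, so the printed $|\mathcal{H}|$ appears to be a typo and your version is the intended one; you simply silently corrected it without remarking on the discrepancy.
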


Conversely, Fact~\ref{FACT:lower-bound-spex} provides an upper bound on the generalized Tur\'{a}n number: 
\begin{align*}
    \mathrm{ex}(n,Q, \mathcal{F})
    \le \frac{\mathrm{spex}_{\alpha}(n,Q,\mathcal{F})}{|\mathrm{Aut}(Q)|} n^{\frac{q}{\alpha}}. 
\end{align*}
We now introduce definitions that extend the corresponding notions from the work of Keevash–Lenz–Mubayi~\cite{KLM14} to the setting of the generalized Tur\'{a}n problem.

Fix an $r$-graph $Q$ and let $q \coloneqq v(Q)$ be the number of vertices in $Q$. 
Let $\mathcal{F}$ be a family of $r$-graphs and $\mathfrak{H}$ be a hereditary family of $\mathcal{F}$-free $r$-graphs. 
Here, \textbf{hereditary} means that for every $\mathcal{H} \in \mathfrak{H}$, all subgraphs of $\mathcal{H}$ also belong to $\mathfrak{H}$. 
For every $n \in \mathbb{N}$, let 
\begin{align*}
    \lambda_{\alpha, Q}(n, \mathfrak{H})
    \coloneqq \max\left\{\lambda_{\alpha, Q}(\mathcal{H}) \colon \text{$\mathcal{H} \in \mathfrak{H}$ and $v(\mathcal{H}) = n$}\right\}.
\end{align*}
For an $r$-graph $\mathcal{H}$ and a vertex $v\in V(\mathcal{H})$, the \textbf{$Q$-degree} of $v$ in $\mathcal{H}$ is defined as 
\begin{align*}
    d_{Q,\mathcal{H}}(v)
    \coloneqq \left\{\phi \in \mathrm{Inj}(Q,\mathcal{H}) \colon v\in \phi(V(Q))\right\}. 
\end{align*}
We use $d_{Q}(\mathcal{H})$ and $\delta_{Q}(\mathcal{H})$ to denote the \textbf{average} and \textbf{minimum} $Q$-degree of $\mathcal{H}$, respectively.
Since every member in $\mathrm{Inj}(Q, \mathcal{H})$ has exactly $q$ vertices, we have  
\begin{align*}
    d_{Q}(\mathcal{H})
    = \frac{1}{n} \sum_{v\in V(\mathcal{H})} d_{Q,\mathcal{H}}(v)
    = \frac{q \cdot \mathrm{inj}(Q, \mathcal{H})}{n}. 
\end{align*}

\begin{definition}\label{DEF:generalized-spectral}
    Let $\alpha \ge 1$ and $\delta \ge 0$ be real numbers. Let $N \ge 1$ be an integer. 
    \begin{enumerate}[label=(\roman*)]
        \item\label{DEF:Q-smoothness} We say that $\mathcal{F}$ is $(\delta, N, Q)$-smooth if for every $n \ge N$ the following inequality holds: 
        \begin{align}\label{equ:Turan-smoothness}
            \left|\mathrm{inj}(n, Q, \mathcal{F}) - \mathrm{inj}(n-1, Q, \mathcal{F}) - q \hat{\pi}(Q, \mathcal{F}) \cdot n^{q-1}\right|
            \le \delta n^{q-1}. 
        \end{align}
        \item\label{DEF:Q-degree-stable} We say that $\mathcal{F}$ is $(\delta, N, Q)$-degree stable with respect to $\mathfrak{H}$ if every $\mathcal{F}$-free $r$-graph $\mathcal{H}$ on $n \ge N$ vertices with $\delta_{Q}(\mathcal{H}) \ge (1-\delta) q \hat{\pi}(Q, \mathcal{F}) n^{q-1}$ is contained in $\mathfrak{H}$.
        \item\label{DEF:spectral-balanced} We say that $\mathfrak{H}$ is $(\alpha, \delta, N, Q, \mathcal{F})$-balanced in spectral if for every $n \ge N$ the following inequality holds: 
        \begin{align}\label{equ:def-spectral-balance}
            \left| \lambda_{\alpha, Q}(n, \mathfrak{H}) - {\mathrm{inj}(n,Q,\mathcal{F})}/{n^{q/\alpha}}\right|
            \le \delta n^{q - q/\alpha - 1}.
        \end{align}
    \end{enumerate}
\end{definition}

The main result of this work is as follows. 
\begin{theorem}\label{THM:generalized-spectral}
    Let $q \ge r \ge 2$ be integers and $\alpha > 1$, $\varepsilon > 0$ be real numbers.
    Let $Q$ be an $r$-graph on $q$ vertices, $\mathcal{F}$ be a family of $r$-graphs with $\hat{\pi}(Q, \mathcal{F}) > 0$, and $\mathfrak{H}$ be a hereditary family of $\mathcal{F}$-free $r$-graphs. 
    There exist constants $\delta > 0$, $M$, and $N$ such that the following holds.
    Suppose that 
    \begin{enumerate}[label=(\roman*)]
        \item\label{THM:generalized-spectral-a} $\mathcal{F}$ is $(\delta, M, Q)$-smooth
        \item\label{THM:generalized-spectral-b} $\mathcal{F}$ is $(\varepsilon, M, Q)$-degree stable with respect to $\mathfrak{H}$, and 
        \item\label{THM:generalized-spectral-c} $\mathfrak{H}$ is $(\alpha, \delta, M, Q, \mathcal{F})$-balanced in spectral. 
    \end{enumerate}
    Then for every $\mathcal{F}$-free $r$-graph $\mathcal{H}$ on $n \ge N$ vertices, we have 
    \begin{align*}
        \lambda_{\alpha, Q}(\mathcal{H})
        \le \lambda_{\alpha, Q}(n, \mathfrak{H}), 
    \end{align*}
    with equality only if $\mathcal{H} \in \mathfrak{H}$.
    In particular, for every $n \ge N$, 
    \begin{align*}
        \mathrm{spex}_{\alpha}(n, Q, \mathcal{F})
        = \lambda_{\alpha, Q}(n, \mathfrak{H}). 
    \end{align*}
\end{theorem}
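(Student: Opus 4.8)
The plan is to reduce the statement to a minimum–$Q$–degree estimate and then run an induction on $n$ driven by deleting a single vertex from an optimal configuration. Since $\mathfrak{H}$ consists of $\mathcal{F}$-free $r$-graphs we always have $\lambda_{\alpha,Q}(n,\mathfrak{H})\le \mathrm{spex}_\alpha(n,Q,\mathcal{F})$, so both conclusions follow once we show: every $\mathcal{F}$-free $r$-graph $\mathcal{H}$ on $n\ge N$ vertices with $\lambda_{\alpha,Q}(\mathcal{H})\ge \lambda_{\alpha,Q}(n,\mathfrak{H})$ lies in $\mathfrak{H}$ (this also forces $\lambda_{\alpha,Q}(\mathcal{H})=\lambda_{\alpha,Q}(n,\mathfrak{H})$). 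By hypothesis~\ref{THM:generalized-spectral-b} it is enough to prove that $\delta_Q(\mathcal{H})\ge(1-\varepsilon)\,q\,\hat{\pi}(Q,\mathcal{F})\,n^{q-1}$.

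Write $\hat{\pi}:=\hat{\pi}(Q,\mathcal{F})$ and $\lambda:=\lambda_{\alpha,Q}(\mathcal{H})$, and let $x\in\Delta_\alpha^{n-1}$ be an optimal vector, which we may take nonnegative. Since $P_{Q,\mathcal{H}}$ is homogeneous of degree $q$, the Lagrange conditions read $\partial_i P_{Q,\mathcal{H}}(x)=q\lambda\,x_i^{\alpha-1}$ at every $i$ with $x_i>0$; multiplying by $x_i$, the total weight of the copies of $Q$ meeting vertex $i$ equals $q\lambda\,x_i^\alpha$. For any vertex $v$, the $r$-graph $\mathcal{H}-v$ is $\mathcal{F}$-free on $n-1$ vertices, and splitting $P_{Q,\mathcal{H}}(x)$ according to whether a copy meets $v$ (and using homogeneity together with the definition of $\lambda_{\alpha,Q}$),
\begin{align*}
\lambda \;=\; P_{Q,\mathcal{H}-v}\!\left(x|_{V(\mathcal{H})\setminus\{v\}}\right)+q\lambda\,x_v^\alpha \;\le\; \lambda_{\alpha,Q}(\mathcal{H}-v)\,(1-x_v^\alpha)^{q/\alpha}+q\lambda\,x_v^\alpha .
\end{align*}
Hence $\lambda\,h(x_v^\alpha)\le \lambda_{\alpha,Q}(\mathcal{H}-v)$, where $h(t):=(1-qt)(1-t)^{-q/\alpha}$ has $h(0)=1$, $h'(0)=-q(\alpha-1)/\alpha<0$, and (a one-line check) is strictly decreasing on $[0,1/q)$ for every $\alpha>1$. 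We now induct on $n$ (for the smallest values one argues directly). Inductively $\lambda_{\alpha,Q}(\mathcal{H}-v)\le \lambda_{\alpha,Q}(n-1,\mathfrak{H})$, while hypotheses~\ref{THM:generalized-spectral-a} and~\ref{THM:generalized-spectral-c} give $\mathrm{inj}(m,Q,\mathcal{F})=(\hat{\pi}+o(1))m^q$, $\lambda_{\alpha,Q}(m,\mathfrak{H})=\mathrm{inj}(m,Q,\mathcal{F})\,m^{-q/\alpha}+O(\delta m^{q-q/\alpha-1})$, and hence (using $\hat{\pi}>0$ and $\alpha>1$) that $\lambda_{\alpha,Q}(m,\mathfrak{H})$ is strictly increasing in $m$ with $\lambda_{\alpha,Q}(n-1,\mathfrak{H})/\lambda_{\alpha,Q}(n,\mathfrak{H})=1-\big(q(\alpha-1)/\alpha+O(\delta)\big)/n$. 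Three consequences: $x$ has no zero coordinate (else deleting it would, by the induction hypothesis and strict monotonicity of $\lambda_{\alpha,Q}(\cdot,\mathfrak{H})$, yield a contradiction); applying the displayed bound to each $v$ and inverting $h$ near $0$ gives $x_v^\alpha\ge(1-c_1\delta)/n$ for an absolute $c_1$; and applying it to the lightest vertex (weight $\le 1/n$) together with $\lambda\ge\lambda_{\alpha,Q}(n,\mathfrak{H})$ pins down $\lambda=\mathrm{inj}(n,Q,\mathcal{F})\,n^{-q/\alpha}+O(\delta n^{q-q/\alpha-1})$.

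The crucial remaining step is to upgrade the lower bound on the entries to near-uniformity, $\max_v x_v^\alpha\le(1+o(1))/n$. Granting this, suppose $\delta_Q(\mathcal{H})<(1-\varepsilon)q\hat{\pi}n^{q-1}$, witnessed by a vertex $v$. The weight of the copies through $v$ is $q\lambda\,x_v^\alpha\le d_{Q,\mathcal{H}}(v)\,(\max_u x_u)^q\le(1-\varepsilon)q\hat{\pi}n^{q-1}(1+o(1))n^{-q/\alpha}$, so dividing by $q\lambda$ and using the sharp value of $\lambda$ forces $x_v^\alpha\le(1-\varepsilon/2)/n$ for $n$ large. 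Plugging this into $\lambda\le\lambda_{\alpha,Q}(n-1,\mathfrak{H})/h(x_v^\alpha)$ and using $h(t)=1-q(\alpha-1)t/\alpha+O(t^2)$ yields $\lambda\le\lambda_{\alpha,Q}(n-1,\mathfrak{H})\big(1+(1-\varepsilon/2)q(\alpha-1)/(\alpha n)+O(1/n^2)\big)$; but $\lambda\ge\lambda_{\alpha,Q}(n,\mathfrak{H})=\lambda_{\alpha,Q}(n-1,\mathfrak{H})\big(1+q(\alpha-1)/(\alpha n)+O(\delta/n)\big)$, and for $\delta$ small and $n$ large these are contradictory. Hence $\delta_Q(\mathcal{H})\ge(1-\varepsilon)q\hat{\pi}n^{q-1}$ and $\mathcal{H}\in\mathfrak{H}$, closing the induction.

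The hard part is precisely the near-uniformity bound $\max_v x_v^\alpha\le(1+o(1))/n$: the elementary arguments above give only the $\ell_1$-bound $\sum_v|x_v^\alpha-n^{-1}|=O(\delta)$, a constant, so a priori the excess weight could be concentrated on a few heavy vertices. I would attack it by a bootstrap. Using $\mathcal{F}$-freeness to bound the maximum $Q$-degree by $O(n^{q-1})$, the Lagrange condition first gives a crude $\max_v x_v=O(n^{-1/\alpha})$ (immediate for $\alpha>q$; for $1<\alpha\le q$ one has to work harder, e.g.\ by exploiting that $P_{Q,\mathcal{H}}$ is the $\alpha$-Lagrangian of the weighted $q$-graph whose edge $S$ carries weight $|\{\phi\in\mathrm{Inj}(Q,\mathcal{H}):\phi(V(Q))=S\}|$). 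One then iterates the deletion inequality over the set of heavy vertices and uses strict convexity of $t\mapsto t^\alpha$ to rule out any concentration of the $O(\delta)$ excess, since such concentration would beat the sharp estimate already obtained for $\lambda$. Finally, arranging the various $O(\delta)$ and $o(1)$ errors so the last comparison beats $\varepsilon$ fixes the quantifier order: fix $\alpha,\varepsilon,Q,\mathcal{F},\mathfrak{H}$, then choose $\delta$ small in terms of $\varepsilon,\hat{\pi},\alpha,q$, then $M$, and finally $N$ large in terms of everything.
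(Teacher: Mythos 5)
Your reduction to a minimum $Q$-degree bound is sound, and the vertex-deletion inequality $\lambda\cdot h(x_v^\alpha)\le\lambda_{\alpha,Q}(\mathcal{H}-v)$ with $h(t)=(1-qt)(1-t)^{-q/\alpha}$ is exactly what the paper uses (it is a rearrangement of the identity inside the proof of Lemma~\ref{LEMMA:delete-light-vertex}). The derived lower bound $x_v^\alpha\ge(1-O(\delta))/n$ via monotonicity of $h$ is also correct. But there is a genuine gap, which you yourself flag: the near-uniformity bound $\max_v x_v^\alpha\le(1+o(1))/n$ is not established, the proposed bootstrap only has a real argument when $\alpha>q$, and it is not clear near-uniformity is even true for an arbitrary $\mathcal{F}$-free $\mathcal{H}$ with large $(\alpha,Q)$-spectral radius. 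Your route to the $Q$-degree bound, $q\lambda x_v^\alpha\le d_{Q,\mathcal{H}}(v)(\max_u x_u)^q$, genuinely needs a uniform upper bound on every coordinate, and an $\ell_1$-type control of the excess $\sum_v|x_v^\alpha-n^{-1}|=O(\delta)$ does nothing to rule out concentration on a few heavy vertices.

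The paper avoids the upper bound entirely. In Lemma~\ref{LEMMA:minimum-degree-weight} one argues by contradiction: assume $x_j\ge(1-\delta)n^{-1/\alpha}$ for every $j$. Writing $q\cdot V^{q-1}$ for the multiset of all ordered $(q-1)$-tuples each taken with multiplicity $q$, the exact identity $\sum_{S\in q\cdot V^{q-1}}x_S^\alpha=q\big(\sum_j x_j^\alpha\big)^{q-1}=q$ holds, and the assumed coordinate lower bound bounds from below the contribution of the tuples \emph{not} in the $Q$-link $L_{Q,\mathcal{H}}^{o}(i_*)$ of a minimum $Q$-degree vertex $i_*$; subtracting gives the upper bound $\sum_{S\in L_{Q,\mathcal{H}}^{o}(i_*)}x_S^\alpha\le q\hat{\pi}$. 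Feeding this into H\"older's inequality $\partial_{i_*}P_{Q,\mathcal{H}}(x)\le d_{Q,\mathcal{H}}(i_*)^{(\alpha-1)/\alpha}\big(\sum_S x_S^\alpha\big)^{1/\alpha}$ and comparing against the Lagrange condition $\partial_{i_*}P_{Q,\mathcal{H}}(x)=q\lambda x_{i_*}^{\alpha-1}$ with $\lambda\ge\mu_n$ and $x_{i_*}\ge(1-\delta)n^{-1/\alpha}$ produces a contradiction with $\delta_Q(\mathcal{H})\le(1-\varepsilon)q\hat{\pi}n^{q-1}$, all with no upper bound on any coordinate. A second structural difference: the paper does not run an induction on $n$ (your induction leaves the base case $n=N$ unaddressed). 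Instead, the strengthened deletion inequality carries the slack factor $(1-\delta/2)$, and iterating the deletion from $n$ down to the fixed constant $N$ telescopes to $\lambda_{\alpha,Q}(\mathcal{H}_N)>N^q$, which is absurd; this direct-iteration-to-contradiction scheme needs no base case.
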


In the next section, we present some definitions and preliminary results. 
In Section~\ref{SEC:Proof-generlized-spectral}, we present the proof of Theorem~\ref{THM:generalized-spectral}. 
In Section~\ref{SEC:Proof-C5-K3}, we present the proof of Theorem~\ref{THM:C5-K3-spectral}. 
In Section~\ref{SEC:Entropy}, we introduce the entropic density of a generalized Tur\'{a}n problem and discuss its connection to the $(\alpha, Q)$-spectral radius. 
Section~\ref{SEC:remark} contains some concluding remarks. 

\section{Preliminaries}\label{SEC:Prelim}
Let $\alpha > 0$ be a real number and $\mathbf{x} = (x_1, \ldots, x_n) \in \Delta_{\alpha}^{n-1}$ be a vector. 
For any subset $I \subseteq [n]$ or ordered tuple $I \in [n]^{k}$ for some $k \in \mathbb{N}$, define 
\begin{align*}
    x_{I}
    \coloneqq \prod_{i \in I} x_i. 
\end{align*}
Let $Q$ and $\mathcal{H}$ be $r$-graphs.
Define the set $\mathrm{OPT}_{\alpha, Q}(\mathcal{H})$ of optimal vectors as
\begin{align*}
    \mathrm{OPT}_{\alpha, Q}(\mathcal{H})
    \coloneqq \left\{ (x_1, \ldots, x_n) \in \Delta_{\alpha}^{n-1} \colon P_{Q,\mathcal{H}}(x_1, \ldots, x_n) = \lambda_{\alpha, Q}(\mathcal{H}) \right\}. 
\end{align*}
We say that a vector $(x_1, \ldots, x_n) \in \mathbb{R}^{n}$ is \textbf{nonnegative} if $x_i \ge 0$ for every $i \in [n]$. 
We will use the following simple fact.
\begin{fact}\label{FACT:nonnegative-optimal-vector}
    The set $\mathrm{OPT}_{\alpha, Q}(\mathcal{H})$ contains a nonnegative vector. 
\end{fact}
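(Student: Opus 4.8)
The plan is to start from an arbitrary optimal vector and replace every coordinate by its absolute value, then argue that this operation does not decrease the value of $P_{Q,\mathcal{H}}$ while keeping the vector inside $\Delta_{\alpha}^{n-1}$. Concretely, let $\mathbf{x} = (x_1,\ldots,x_n) \in \mathrm{OPT}_{\alpha, Q}(\mathcal{H})$, which is nonempty because $P_{Q,\mathcal{H}}$ is continuous and $\Delta_{\alpha}^{n-1}$ is compact. Define $\mathbf{y} = (|x_1|,\ldots,|x_n|)$. Since $|x_i|^{\alpha} = |\,|x_i|\,|^{\alpha}$ for each $i$, we have $\mathbf{y} \in \Delta_{\alpha}^{n-1}$ as well, so $\mathbf{y}$ is a feasible point of the maximization defining $\lambda_{\alpha, Q}(\mathcal{H})$.

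The key step is the monomial-by-monomial comparison. Recall
\begin{align*}
    P_{Q,\mathcal{H}}(x_1, \ldots, x_n)
    = \sum_{\phi \in \mathrm{Inj}(Q,\mathcal{H})} \prod_{i\in \phi(V(Q))} x_i .
\end{align*}
For each $\phi \in \mathrm{Inj}(Q,\mathcal{H})$, the monomial $\prod_{i\in \phi(V(Q))} x_i$ satisfies
\begin{align*}
    \prod_{i\in \phi(V(Q))} x_i
    \le \left| \prod_{i\in \phi(V(Q))} x_i \right|
    = \prod_{i\in \phi(V(Q))} |x_i|
    = \prod_{i\in \phi(V(Q))} y_i .
\end{align*}
Summing over all $\phi \in \mathrm{Inj}(Q,\mathcal{H})$ gives $P_{Q,\mathcal{H}}(\mathbf{x}) \le P_{Q,\mathcal{H}}(\mathbf{y})$. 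Since $\mathbf{x}$ is optimal, $P_{Q,\mathcal{H}}(\mathbf{y}) \ge P_{Q,\mathcal{H}}(\mathbf{x}) = \lambda_{\alpha, Q}(\mathcal{H})$, and since $\mathbf{y}$ is feasible the reverse inequality $P_{Q,\mathcal{H}}(\mathbf{y}) \le \lambda_{\alpha, Q}(\mathcal{H})$ also holds. Hence $P_{Q,\mathcal{H}}(\mathbf{y}) = \lambda_{\alpha, Q}(\mathcal{H})$, so $\mathbf{y} \in \mathrm{OPT}_{\alpha, Q}(\mathcal{H})$ is the desired nonnegative optimal vector.

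There is essentially no obstacle here; the only point requiring a word of care is that the coefficients of $P_{Q,\mathcal{H}}$ are all nonnegative (indeed all equal to $1$ in this normalization), which is what makes the termwise bound $\prod x_i \le \prod |x_i|$ combine correctly after summation — if some coefficients were negative the argument would fail. Since $\mathrm{Inj}(Q,\mathcal{H})$ contributes each monomial with coefficient $1 \ge 0$, this is immediate, and no further estimates are needed.
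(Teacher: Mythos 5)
Your proof is correct and takes essentially the same approach as the paper: take an arbitrary optimizer, replace each coordinate by its absolute value, and observe that this stays in $\Delta_{\alpha}^{n-1}$ and cannot decrease $P_{Q,\mathcal{H}}$ because all monomial coefficients are nonnegative. The paper states the inequality $P_{Q,\mathcal{H}}(|x_1|,\ldots,|x_n|)\ge P_{Q,\mathcal{H}}(x_1,\ldots,x_n)$ without comment, whereas you spell out the termwise triangle-inequality justification, but the argument is identical.
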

\begin{proof}[Proof of Fact~\ref{FACT:nonnegative-optimal-vector}]
    Since $\Delta_{\alpha}^{n-1}$ is compact and $P_{Q, \mathcal{H}}$ is continuous, the set $\mathrm{OPT}_{\alpha, Q}(\mathcal{H})$ is nonempty. 
    Fix any vector $(x_1, \ldots, x_n) \in \mathrm{OPT}_{\alpha, Q}(\mathcal{H}) \subseteq \Delta_{\alpha}^{n-1}$. Note that the vector $(|x_1|, \ldots, |x_n|)$ also lies in $\Delta_{\alpha}^{n-1}$, and moreover, 
    \begin{align*}
        P_{Q, \mathcal{H}}(|x_1|, \ldots, |x_n|) 
        \ge P_{Q, \mathcal{H}}(x_1, \ldots, x_n). 
    \end{align*}
    It follows that $(|x_1|, \ldots, |x_n|) \in \mathrm{OPT}_{\alpha, Q}(\mathcal{H})$ as well. It is clear that this vector is nonnegative.
\end{proof}
Let $q \coloneqq v(Q)$. 
Fix an ordering $(u_1, \ldots, u_{q})$ of the vertex set of $Q$. 
We identify each map $\phi \in \mathrm{Inj}(Q,\mathcal{H})$ with the ordered $q$-tuple $\left(\phi(u_1), \ldots, \phi(u_{q})\right)$ (or with the unordered set $\left\{\phi(u_1), \ldots, \phi(u_{q})\right\}$ when the ordering is not crucial). 
Thus, we abbreviate $\phi(V(Q))$ by $\phi$. 

For a map $\phi \in \mathrm{Inj}(Q, \mathcal{H})$ and a subset $S\subseteq V(\mathcal{H})$, we write $\phi - S$ to denote, for notational convenience, the ordered tuple (resp. unordered set) obtained from $\left(\phi(u_1), \ldots, \phi(u_{q})\right)$ (resp. $\left\{\phi(u_1), \ldots, \phi(u_{q})\right\}$) by removing all entries (resp. elements) that lie in $S$. 
When $S = \{v\}$ consists of a single vertex, we simply write $\phi - v$ in place of $\phi - \{v\}$.

For a pair of vertices $(u, v) \in V(Q) \times V(\mathcal{H})$, define 
\begin{align*}
    \mathrm{Inj}(Q, \mathcal{H}, u \to v)
    \coloneqq \left\{  \phi \in \mathrm{Inj}(Q, \mathcal{H}) \colon \phi(u) = v \right\}.
\end{align*}
For every $v \in V(\mathcal{H})$, let 
\begin{align*}
    \mathrm{Inj}(Q, \mathcal{H}, v)
    \coloneqq \left\{ \phi \in \mathrm{Inj}(Q, \mathcal{H}) \colon v \in \phi \right\}
    = \bigcup_{w \in V(Q)} \mathrm{Inj}(Q, \mathcal{H}, w \to v).
\end{align*}
The (ordered) $Q$-link of $v$ in $\mathcal{H}$ is the following multiset of ordered\footnote{Here, we use $\multiset{}$ to indicate that the set is order multiset.} tuples:
\begin{align*}
    L_{Q, \mathcal{H}}^{o}(v)
    \coloneqq \multiset{\phi - v \colon  \phi \in \mathrm{Inj}(Q,\mathcal{H},v)}. 
\end{align*}
Note that the multiplicity of each element in $L_{Q, \mathcal{H}}^{o}(v)$ is at most $q$, since $\phi$ maps each of the $q$ vertices of $Q$ to $v$ in at most one coordinate.

For a subset $S \subseteq V(\mathcal{H})$, let $\mathcal{H}[S]$ denote the induced subgraph of $\mathcal{H}$ on $S$. 
We write $\mathcal{H} - S$ for the subgraph of $\mathcal{H}$ induced by $V(\mathcal{H}) \setminus S$, and when $S = \{v\}$ consists of a single vertex, we simply write $\mathcal{H} - v$.

\begin{fact}[H\"{o}lder's inequality]\label{FACT:Holder-Inequality}
    Suppose that $(x_1, \ldots, x_n)$ and $(y_1, \ldots, y_n)$ are nonnegative vectors and $p, q$ are positive integers satisfying $\frac{1}{p} + \frac{1}{q} = 1$. Then  
    \begin{align*}
        \sum_{i \in [n]} x_i y_i 
        \le \big(x_1^{p} + \cdots + x_{n}^{p} \big)^{\frac{1}{p}} \big( y_1^{q} + \cdots + y_n^{q} \big)^{\frac{1}{q}}. 
    \end{align*}
    In particular, taking $(x_1, \ldots, x_n) = (1,\ldots, 1)$ and $(p, q) = \big( \frac{\alpha}{\alpha-1} , \alpha\big)$ with $\alpha > 1$, we obtain 
    \begin{align*}
        \sum_{i \in [n]}  y_i 
        \le n^{\frac{\alpha-1}{\alpha}} \big( y_1^{\alpha} + \cdots + y_n^{\alpha} \big)^{\frac{1}{\alpha}}.  
    \end{align*}
\end{fact}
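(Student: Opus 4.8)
The plan is to derive Hölder's inequality from Young's inequality via a normalization argument, which is the classical route. First I would establish \emph{Young's inequality}: for nonnegative reals $a, b$ and conjugate exponents $p, q > 1$ with $\frac1p + \frac1q = 1$,
\[
    ab \le \frac{a^p}{p} + \frac{b^q}{q}.
\]
When $a = 0$ or $b = 0$ this is trivial, and otherwise it follows at once from the concavity of $\log$ on $(0,\infty)$: with weights $\frac1p, \frac1q$ summing to $1$ and with $u = a^p$, $v = b^q$ we have $\log\!\big(\tfrac1p u + \tfrac1q v\big) \ge \tfrac1p \log u + \tfrac1q \log v = \log a + \log b = \log(ab)$, and exponentiating (using that $\exp$ is increasing) gives the claim. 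Equivalently, one may invoke the convexity of $t \mapsto e^t$.

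Next I would handle the two degenerate cases of the main inequality. If $x_1^p + \cdots + x_n^p = 0$ then every $x_i = 0$ and both sides vanish; the same holds if $y_1^q + \cdots + y_n^q = 0$. So assume $A \coloneqq (x_1^p + \cdots + x_n^p)^{1/p} > 0$ and $B \coloneqq (y_1^q + \cdots + y_n^q)^{1/q} > 0$. Applying Young's inequality to $a = x_i/A$ and $b = y_i/B$ for each $i \in [n]$ and summing over $i$ yields
\[
    \sum_{i \in [n]} \frac{x_i y_i}{AB}
    \le \sum_{i \in [n]} \left( \frac{x_i^p}{p A^p} + \frac{y_i^q}{q B^q} \right)
    = \frac{1}{p} \cdot \frac{\sum_{i \in [n]} x_i^p}{A^p} + \frac{1}{q} \cdot \frac{\sum_{i \in [n]} y_i^q}{B^q}
    = \frac{1}{p} + \frac{1}{q} = 1.
\]
Multiplying through by $AB \ge 0$ gives $\sum_{i \in [n]} x_i y_i \le AB = \big(\sum_i x_i^p\big)^{1/p}\big(\sum_i y_i^q\big)^{1/q}$, which is the stated inequality.

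Finally, for the ``in particular'' part I would substitute $(x_1, \ldots, x_n) = (1, \ldots, 1)$ and $(p, q) = \big(\tfrac{\alpha}{\alpha-1}, \alpha\big)$, for which $\frac1p + \frac1q = \frac{\alpha-1}{\alpha} + \frac1\alpha = 1$. Then $x_1^p + \cdots + x_n^p = n$, so $\big(x_1^p + \cdots + x_n^p\big)^{1/p} = n^{(\alpha-1)/\alpha}$, and the general inequality specializes to $\sum_{i \in [n]} y_i \le n^{(\alpha-1)/\alpha}\big(y_1^\alpha + \cdots + y_n^\alpha\big)^{1/\alpha}$, as required. There is no genuine obstacle here; the only points needing any care are the reduction to Young's inequality and the separate treatment of the degenerate cases. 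I would also note that the argument works verbatim for arbitrary conjugate \emph{real} exponents $p, q > 1$, which is in fact what the invoked special case $(p,q) = \big(\tfrac{\alpha}{\alpha-1}, \alpha\big)$ needs.
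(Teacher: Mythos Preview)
Your proof is correct and complete; the normalization-plus-Young argument is the standard classical route, and you correctly flag that the ``in particular'' clause actually requires real conjugate exponents rather than the ``positive integers'' stated in the hypothesis. The paper, however, does not prove this fact at all---it is simply recorded as the classical H\"{o}lder inequality and used without proof---so there is no approach to compare against; your write-up supplies more than the paper does.
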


\begin{fact}\label{FACT:derivative-Lagrange-poly}
    Let $Q$ be an $r$-graph, and let $\mathcal{H}$ be an $r$-graph on vertex set $[n]$. 
    For every $i \in [n]$, the $i$-th partial derivative of $P(Q,\mathcal{H})(X_1, \ldots, X_n)$ satisfies
    \begin{align}\label{equ:derivative-link}
        \partial_{i} P_{Q,\mathcal{H}}(X_1, \ldots, X_n)
        = \sum_{S \in L_{Q, \mathcal{H}}^{o}(i)} X_{S}. 
    \end{align}
    In particular, by Fact~\ref{FACT:Holder-Inequality}, for every nonnegative vector $(x_1, \ldots, x_n) \in \mathbb{R}^{n}$, we have 
    \begin{align}\label{equ:derivative-link-holder}
        \partial_{i} P_{Q,\mathcal{H}}(x_1, \ldots, x_n)
        \le |\mathrm{Inj}(Q,\mathcal{H},i)|^{\frac{\alpha-1}{\alpha}} \left( \sum_{S \in L_{Q, \mathcal{H}}^{o}(i)} X_{S}^{\alpha} \right)^{1/\alpha}.
    \end{align}
\end{fact}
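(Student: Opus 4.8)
The plan is to compute the partial derivative directly, monomial by monomial, from the definition of the Lagrangian $Q$-polynomial, and then to read off the displayed inequality~\eqref{equ:derivative-link-holder} as an immediate application of H\"older's inequality (Fact~\ref{FACT:Holder-Inequality}). There is no real difficulty here; the statement is essentially a bookkeeping identity, and the proof will be short.

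First I would recall that, under the identification of each $\phi \in \mathrm{Inj}(Q,\mathcal{H})$ with the set $\phi(V(Q))$ of its (pairwise distinct, since $\phi$ is injective) image vertices, we have $P_{Q,\mathcal{H}}(X_1,\ldots,X_n) = \sum_{\phi \in \mathrm{Inj}(Q,\mathcal{H})} X_{\phi}$, a sum of squarefree degree-$q$ monomials with possible repetitions. Since differentiation is linear, $\partial_i P_{Q,\mathcal{H}} = \sum_{\phi \in \mathrm{Inj}(Q,\mathcal{H})} \partial_i X_{\phi}$. For a fixed $\phi$, the monomial $X_\phi = \prod_{j \in \phi} X_j$ is squarefree, so $\partial_i X_\phi = X_{\phi - i}$ if $i \in \phi$ (exactly one factor is removed, leaving a degree-$(q-1)$ monomial) and $\partial_i X_\phi = 0$ otherwise. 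Hence only the maps in $\mathrm{Inj}(Q,\mathcal{H},i) = \{\phi \colon i \in \phi\}$ contribute, and $\partial_i P_{Q,\mathcal{H}} = \sum_{\phi \in \mathrm{Inj}(Q,\mathcal{H},i)} X_{\phi - i}$. By the very definition of the ordered $Q$-link, $L^o_{Q,\mathcal{H}}(i) = \multiset{\phi - i \colon \phi \in \mathrm{Inj}(Q,\mathcal{H},i)}$, so summing $X_S$ over $S \in L^o_{Q,\mathcal{H}}(i)$ \emph{with multiplicity} reproduces exactly $\sum_{\phi \in \mathrm{Inj}(Q,\mathcal{H},i)} X_{\phi - i}$, which is~\eqref{equ:derivative-link}. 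The one point that needs to be kept straight is the handling of multiplicities: distinct injective homomorphisms may share the same image set, so the coefficient of a monomial in $P_{Q,\mathcal{H}}$ equals the number of contributing $\phi$'s, and using the multiset $L^o_{Q,\mathcal{H}}(i)$ rather than its underlying set records this faithfully; in particular $|L^o_{Q,\mathcal{H}}(i)| = |\mathrm{Inj}(Q,\mathcal{H},i)|$.

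For the ``in particular'' bound, I would evaluate~\eqref{equ:derivative-link} at a nonnegative vector $(x_1,\ldots,x_n)$, obtaining $\partial_i P_{Q,\mathcal{H}}(x_1,\ldots,x_n) = \sum_{S \in L^o_{Q,\mathcal{H}}(i)} x_S$, a sum of $m \coloneqq |L^o_{Q,\mathcal{H}}(i)| = |\mathrm{Inj}(Q,\mathcal{H},i)|$ nonnegative reals. Applying Fact~\ref{FACT:Holder-Inequality} with the all-ones vector of length $m$ and the exponent pair $(p,q) = \big(\frac{\alpha}{\alpha-1},\alpha\big)$ yields $\sum_{S \in L^o_{Q,\mathcal{H}}(i)} x_S \le m^{\frac{\alpha-1}{\alpha}}\big(\sum_{S \in L^o_{Q,\mathcal{H}}(i)} x_S^{\alpha}\big)^{1/\alpha}$, which is precisely~\eqref{equ:derivative-link-holder}. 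The main (and essentially only) thing to be careful about is that the index set in this application of H\"older's inequality is the multiset $L^o_{Q,\mathcal{H}}(i)$ of size $|\mathrm{Inj}(Q,\mathcal{H},i)|$, not $[n]$.
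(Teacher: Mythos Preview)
Your proposal is correct and matches the paper's own treatment: the paper states this as a Fact without a separate proof, the identity~\eqref{equ:derivative-link} being immediate from the definitions of $P_{Q,\mathcal{H}}$ and $L^{o}_{Q,\mathcal{H}}(i)$, and the bound~\eqref{equ:derivative-link-holder} being the displayed special case of H\"older's inequality from Fact~\ref{FACT:Holder-Inequality}. Your write-up fills in exactly these details, including the careful remark that the index set is the multiset $L^{o}_{Q,\mathcal{H}}(i)$ of size $|\mathrm{Inj}(Q,\mathcal{H},i)|$.
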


The following fact follows directly from the definition of $\hat{\pi}(Q,\mathcal{F})$. 
\begin{fact}\label{FACT:concentration-generalized-Turan-denisty}
    Let $Q$ be an $r$-graph and $\mathcal{F}$ be a family of $r$-graphs. 
    For every $\delta>0$, there exists a constant $N_{\ref{FACT:concentration-generalized-Turan-denisty}} = N_{\ref{FACT:concentration-generalized-Turan-denisty}}(\delta)$ such that for every $n \ge N_{\ref{FACT:concentration-generalized-Turan-denisty}}$, 
    \begin{align}\label{equ:concentration-generalized-Turan-denisty}
        \left| \mathrm{inj}(n,Q,\mathcal{F}) - \hat{\pi}(Q,\mathcal{F}) n^{q} \right|
        \le \delta n^{q}. 
    \end{align}
\end{fact}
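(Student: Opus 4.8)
The plan is to treat Fact~\ref{FACT:concentration-generalized-Turan-denisty} as essentially a rephrasing of the $\varepsilon$–$N$ definition of the limit $\hat{\pi}(Q,\mathcal{F}) = \lim_{n\to\infty}\mathrm{inj}(n,Q,\mathcal{F})/n^{q}$, once both sides of~\eqref{equ:concentration-generalized-Turan-denisty} are divided by $n^{q}$. The only point that deserves more than a line is the existence of that limit, which I would establish by the standard vertex-deletion averaging argument, included to keep the proof self-contained.

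For existence, fix $n>q$ and let $\mathcal{H}$ be any $\mathcal{F}$-free $r$-graph on $n$ vertices. For each $\phi\in\mathrm{Inj}(Q,\mathcal{H})$ the image $\phi(V(Q))$ has exactly $q$ vertices, so $\phi$ remains an injective homomorphism into $\mathcal{H}-v$ precisely for the $n-q$ vertices $v\notin\phi(V(Q))$; summing over $v$ gives $\sum_{v\in V(\mathcal{H})}\mathrm{inj}(Q,\mathcal{H}-v)=(n-q)\,\mathrm{inj}(Q,\mathcal{H})$. Since $\mathcal{F}$-freeness is inherited by induced subgraphs, each $\mathcal{H}-v$ is an $\mathcal{F}$-free $r$-graph on $n-1$ vertices, hence $\mathrm{inj}(Q,\mathcal{H}-v)\le\mathrm{inj}(n-1,Q,\mathcal{F})$; taking the maximum over $\mathcal{H}$ yields $(n-q)\,\mathrm{inj}(n,Q,\mathcal{F})\le n\,\mathrm{inj}(n-1,Q,\mathcal{F})$. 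Using $\binom{n}{q}=\tfrac{n}{n-q}\binom{n-1}{q}$, this rearranges to $\mathrm{inj}(n,Q,\mathcal{F})/\binom{n}{q}\le\mathrm{inj}(n-1,Q,\mathcal{F})/\binom{n-1}{q}$, so the sequence $\bigl(\mathrm{inj}(n,Q,\mathcal{F})/\binom{n}{q}\bigr)_{n>q}$ is non-increasing and bounded below by $0$, hence convergent; since $\binom{n}{q}/n^{q}\to 1/q!$, the sequence $\mathrm{inj}(n,Q,\mathcal{F})/n^{q}$ converges as well, and its limit is by definition $\hat{\pi}(Q,\mathcal{F})$.

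With existence in hand I would finish by invoking the definition of a limit: given $\delta>0$, choose $N_{\ref{FACT:concentration-generalized-Turan-denisty}}$ so that $\bigl|\mathrm{inj}(n,Q,\mathcal{F})/n^{q}-\hat{\pi}(Q,\mathcal{F})\bigr|\le\delta$ for all $n\ge N_{\ref{FACT:concentration-generalized-Turan-denisty}}$, and multiply through by $n^{q}$ to get~\eqref{equ:concentration-generalized-Turan-denisty}. There is no substantive obstacle: if the existence of $\hat{\pi}(Q,\mathcal{F})$ is taken for granted (as the notation in the statement of the limit already presumes), the proof collapses to this last paragraph, and the averaging step is only bookkeeping to justify that the defining limit actually exists.
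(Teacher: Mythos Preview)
Your proposal is correct and matches the paper's treatment: the paper gives no proof at all, stating only that the fact ``follows directly from the definition of $\hat{\pi}(Q,\mathcal{F})$,'' which is exactly your final paragraph. Your additional averaging argument for the existence of the limit is a welcome bonus that the paper omits entirely.
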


The following fact is an immediate consequence of~\eqref{equ:def-spectral-balance} and Fact~\ref{FACT:concentration-generalized-Turan-denisty}. 
\begin{fact}\label{FACT:mu-n-lower-bound}
    Let $Q$ be an $r$-graph, $\mathcal{F}$ be a family of $r$-graphs, and $\mathfrak{H}$ be a hereditary family of $\mathcal{F}$-free $r$-graphs. 
    Suppose that $\mathfrak{H}$ is $(\alpha, \delta, N, Q, \mathcal{F})$-balanced in spectral. Then for every $\delta' > 0$, there exists a constant $N_{\ref{FACT:mu-n-lower-bound}} = N_{\ref{FACT:mu-n-lower-bound}}(\delta')$ such that for every $n \ge N_{\ref{FACT:mu-n-lower-bound}}$, 
    \begin{align*}
        & \quad \left|\lambda_{\alpha, Q}(n, \mathfrak{H}) - \hat{\pi}(Q,\mathcal{F}) n^{q - \frac{q}{\alpha}}\right| \\
        & \le \left|\lambda_{\alpha, Q}(n, \mathfrak{H}) - \frac{\mathrm{inj}(n,Q,\mathcal{F})}{n^{q/\alpha}} \right| + \left|\frac{\mathrm{inj}(n,Q,\mathcal{F})}{n^{q/\alpha}} - \hat{\pi}(Q,\mathcal{F}) n^{q - \frac{q}{\alpha}} \right| \\
        & \le \frac{\delta}{n} n^{q-\frac{q}{\alpha}} + \frac{\delta'}{2} n^{q - \frac{q}{\alpha}}
        \le \delta' n^{q - \frac{q}{\alpha}}. 
    \end{align*}
\end{fact}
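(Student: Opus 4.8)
The plan is to obtain the bound by the triangle inequality displayed in the statement, inserting the intermediate quantity $\mathrm{inj}(n,Q,\mathcal{F})/n^{q/\alpha}$ between $\lambda_{\alpha, Q}(n, \mathfrak{H})$ and $\hat{\pi}(Q,\mathcal{F})\, n^{q - q/\alpha}$, and then controlling the two resulting pieces by the two facts already available: the spectral-balance hypothesis for the first piece, and Fact~\ref{FACT:concentration-generalized-Turan-denisty} for the second.

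First I would write, for every $n$,
\[
\left|\lambda_{\alpha, Q}(n, \mathfrak{H}) - \hat{\pi}(Q,\mathcal{F})\, n^{q - q/\alpha}\right|
\le \left|\lambda_{\alpha, Q}(n, \mathfrak{H}) - \tfrac{\mathrm{inj}(n,Q,\mathcal{F})}{n^{q/\alpha}}\right|
   + \left|\tfrac{\mathrm{inj}(n,Q,\mathcal{F})}{n^{q/\alpha}} - \hat{\pi}(Q,\mathcal{F})\, n^{q - q/\alpha}\right|.
\]
Since $\mathfrak{H}$ is $(\alpha, \delta, N, Q, \mathcal{F})$-balanced in spectral, inequality~\eqref{equ:def-spectral-balance} bounds the first term by $\delta\, n^{q - q/\alpha - 1} = (\delta/n)\, n^{q - q/\alpha}$ whenever $n \ge N$. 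For the second term, I would apply Fact~\ref{FACT:concentration-generalized-Turan-denisty} with parameter $\delta'/2$, obtaining a constant $N_{\ref{FACT:concentration-generalized-Turan-denisty}}(\delta'/2)$ so that $\left|\mathrm{inj}(n,Q,\mathcal{F}) - \hat{\pi}(Q,\mathcal{F})\, n^{q}\right| \le (\delta'/2)\, n^{q}$ for all $n \ge N_{\ref{FACT:concentration-generalized-Turan-denisty}}(\delta'/2)$; dividing through by $n^{q/\alpha}$ bounds the second term by $(\delta'/2)\, n^{q - q/\alpha}$.

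Finally I would set $N_{\ref{FACT:mu-n-lower-bound}}(\delta') \coloneqq \max\bigl\{N,\ N_{\ref{FACT:concentration-generalized-Turan-denisty}}(\delta'/2),\ \lceil 2\delta/\delta'\rceil\bigr\}$, so that for every $n \ge N_{\ref{FACT:mu-n-lower-bound}}(\delta')$ both bounds above apply and $\delta/n \le \delta'/2$; adding them gives $(\delta/n + \delta'/2)\, n^{q - q/\alpha} \le \delta'\, n^{q - q/\alpha}$, which is the claimed inequality. There is no genuine obstacle here: the only point worth noting is the scaling, namely that the spectral-balance error is of order $n^{q - q/\alpha - 1}$, one factor of $n$ smaller than the target error $n^{q - q/\alpha}$, so it is automatically absorbed once $n$ is large relative to $\delta/\delta'$, while the concentration fact contributes the matching-order term $(\delta'/2)\, n^{q - q/\alpha}$.
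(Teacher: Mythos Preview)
Your proposal is correct and follows exactly the route the paper takes: triangle inequality with the intermediate term $\mathrm{inj}(n,Q,\mathcal{F})/n^{q/\alpha}$, then bound the two pieces using the spectral-balance hypothesis~\eqref{equ:def-spectral-balance} and Fact~\ref{FACT:concentration-generalized-Turan-denisty}, respectively. Your explicit choice of $N_{\ref{FACT:mu-n-lower-bound}}(\delta')$ is slightly more detailed than the paper's presentation, but the argument is identical.
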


\section{Proof of Theorem~\ref{THM:generalized-spectral}}\label{SEC:Proof-generlized-spectral}
In this section, we present the proof of Theorem~\ref{THM:generalized-spectral}.
Throughout, we assume that 
\begin{enumerate}[label=(\roman*)]
    \item\label{Assume-a} $r \ge 2$ is an integer and $\alpha > 1$ is a real number; 
    \item\label{Assume-b} $Q$ is an $r$-graph on the vertex set $[q]$, $\mathcal{F}$ is a family of $r$-graphs satisfying $\hat{\pi}(Q,\mathcal{F}) > 0$, $\mathfrak{H}$ is a hereditary family of $\mathcal{F}$-free $r$-graphs, and $\mathcal{H}$ is an $\mathcal{F}$-free $r$-graph on $[n]$;
    \item\label{Assume-c} $\mathbf{x} = (x_1, \ldots, x_n) \in \mathrm{OPT}_{\alpha, Q}(\mathcal{H})$ is a nonnegative optimal vector, whose existence is guaranteed by Fact~\ref{FACT:nonnegative-optimal-vector}..
\end{enumerate}
For convenience, let 
\begin{align*}
    \hat{\pi} \coloneqq \hat{\pi}(Q,\mathcal{F})
    \quad\text{and}\quad 
    \mu_{n}
    \coloneqq \lambda_{\alpha, Q}(n, \mathfrak{H}). 
\end{align*}
%

\subsection{Preparations}\label{SUBSEC:Preparation-Proof-generlized-spectral}
The following lemma is a straightforward consequence of the Lagrange multiplier method. 
\begin{lemma}\label{LEMMA:Lagrange-multiplier}
    For every $i \in [n]$, we have 
    \begin{align*}
        \partial_{i} P_{Q,\mathcal{H}} (x_1, \ldots, x_n)
        = q \cdot \lambda_{\alpha, Q}(\mathcal{H}) \cdot x_i^{\alpha-1}. 
    \end{align*}
\end{lemma}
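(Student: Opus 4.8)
## Proof Proposal for Lemma~\ref{LEMMA:Lagrange-multiplier}

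The plan is to apply the method of Lagrange multipliers to the constrained optimization problem that defines $\lambda_{\alpha, Q}(\mathcal{H})$, and then to exploit homogeneity to pin down the value of the multiplier. Since $\mathbf{x} = (x_1, \ldots, x_n)$ is a nonnegative optimal vector for the problem of maximizing $P_{Q,\mathcal{H}}$ over $\Delta_{\alpha}^{n-1}$, and since the constraint function $g(X_1, \ldots, X_n) \coloneqq \sum_{i} |X_i|^{\alpha} - 1$ agrees on the nonnegative orthant with the smooth function $\sum_i X_i^{\alpha} - 1$ (using $\alpha > 1$ so this is differentiable), I would first argue that $\mathbf{x}$ is a critical point of $P_{Q,\mathcal{H}}$ subject to $\sum_i X_i^{\alpha} = 1$. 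One mild technical point: the gradient of the constraint, $(\alpha x_1^{\alpha-1}, \ldots, \alpha x_n^{\alpha-1})$, vanishes only at the origin, which is not on $\Delta_{\alpha}^{n-1}$, so the constraint qualification holds and there is a genuine multiplier $\Lambda \in \mathbb{R}$ with
\begin{align*}
    \partial_i P_{Q,\mathcal{H}}(\mathbf{x}) = \Lambda \cdot \alpha \, x_i^{\alpha - 1}
    \qquad \text{for every } i \in [n].
\end{align*}

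Next I would identify $\Lambda$. Multiply the $i$-th equation by $x_i$ and sum over $i \in [n]$. On the right-hand side we get $\Lambda \alpha \sum_i x_i^{\alpha} = \Lambda \alpha$, using $\mathbf{x} \in \Delta_{\alpha}^{n-1}$. On the left-hand side, $\sum_i x_i \, \partial_i P_{Q,\mathcal{H}}(\mathbf{x})$ equals $q \cdot P_{Q,\mathcal{H}}(\mathbf{x})$ by Euler's identity for homogeneous polynomials, since $P_{Q,\mathcal{H}}$ is homogeneous of degree $q$ (each monomial $\prod_{i \in \phi(V(Q))} X_i$ has exactly $q = v(Q)$ distinct variables). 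Hence $q \cdot \lambda_{\alpha, Q}(\mathcal{H}) = \Lambda \alpha$, i.e. $\Lambda = q \lambda_{\alpha, Q}(\mathcal{H})/\alpha$. Substituting back gives $\partial_i P_{Q,\mathcal{H}}(\mathbf{x}) = q \cdot \lambda_{\alpha, Q}(\mathcal{H}) \cdot x_i^{\alpha-1}$, as claimed.

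I do not expect a serious obstacle here; this is a standard computation. The only points requiring a little care are: (i) justifying differentiability of the constraint at $\mathbf{x}$ — this uses $\alpha > 1$, and is why we may restrict attention to the nonnegative optimal vector supplied by Fact~\ref{FACT:nonnegative-optimal-vector} rather than worrying about $|X_i|^{\alpha}$ being non-smooth at $0$; and (ii) handling coordinates with $x_i = 0$, for which both sides of the asserted identity vanish — the right-hand side trivially (as $\alpha - 1 > 0$), and the left-hand side because, by Fact~\ref{FACT:derivative-Lagrange-poly}, $\partial_i P_{Q,\mathcal{H}}(\mathbf{x}) = \sum_{S \in L_{Q,\mathcal{H}}^{o}(i)} x_S$ is a sum of products each of which already omits the variable $X_i$, so this needs a short separate argument rather than direct appeal to the multiplier equation. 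Alternatively one can sidestep (ii) entirely by noting the multiplier equation holds at every $i$ regardless, and then the vanishing of the right side at $x_i=0$ forces the left side to vanish too — consistency is automatic. I would write the argument in the first, direct way, invoking Euler's identity as the key structural input.
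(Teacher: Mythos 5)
Your proof is correct and follows essentially the same route as the paper's: apply Lagrange multipliers to $P_{Q,\mathcal{H}}$ on $\Delta_{\alpha}^{n-1}$, then pin down the multiplier by multiplying the $i$-th equation by $x_i$, summing, and invoking Euler's identity for the degree-$q$ homogeneous polynomial $P_{Q,\mathcal{H}}$. The paper works with the constraint $\sum_i |X_i|^\alpha - 1$ directly (whose gradient at a nonnegative point is $(\alpha x_i^{\alpha-1})_i$, matching yours), so your technical caveats (i) and (ii) are handled implicitly there; your more explicit discussion of them is sound but not a departure in method.
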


We remark that if the nonnegativity restriction on $\mathbf{x}$ is removed, then Lemma~\ref{LEMMA:Lagrange-multiplier} becomes
\begin{align*}
    \partial_{i} P_{Q,\mathcal{H}} (x_1, \ldots, x_n)
    = q \cdot \lambda_{\alpha, Q}(\mathcal{H}) \cdot x_i |x_{i}|^{\alpha-2}. 
\end{align*}

\begin{proof}[Proof of Lemma~\ref{LEMMA:Lagrange-multiplier}]
    Define 
    \begin{align*}
        \mathcal{L}(X_1, \ldots, X_n, \beta)
        \coloneqq P_{Q, \mathcal{H}}(X_1, \ldots, X_n) - \beta  \left(|X_{1}|^{\alpha} + \cdots + |X_{n}|^{\alpha} - 1\right). 
    \end{align*}
    Since $\mathbf{x} = (x_1, \ldots, x_n) \in \mathrm{OPT}_{\alpha, Q}(\mathcal{H})$, it follows from the Lagrange multiplier method that $\partial_{i} \mathcal{L}(\mathbf{x}) = 0$ for every $i \in [n]$.
    Consequently,
    \begin{align}\label{equ:derivative-Lagrange}
        \partial_{i} P_{Q, \mathcal{H}}(\mathbf{x}) 
        = \beta \alpha x_i |x_{i}|^{\alpha-2} 
        = \beta \alpha x_i^{\alpha-1} 
        \quad\text{for every}\quad i \in [n]. 
    \end{align}
    Multiply $x_i$ on both sides and then summing over all $i \in [n]$, we obtain 
    \begin{align}\label{equ:Lagrange-multiplier-sum}
        \sum_{i \in [n]} x_i \cdot \partial_{i} P_{Q, \mathcal{H}}(\mathbf{x}) 
        = \sum_{i \in [n]} \beta \alpha x_i^{\alpha}
        = \beta \alpha.
    \end{align}
    Since 
    \begin{align*}
        \sum_{i \in [n]} x_i \cdot \partial_{i} P_{Q, \mathcal{H}}(\mathbf{x}) 
        = q \cdot P_{Q, \mathcal{H}}(\mathbf{x}),
    \end{align*}
    it follows from~\eqref{equ:Lagrange-multiplier-sum} that 
    \begin{align*}
        \beta \alpha
        = q \cdot P_{Q, \mathcal{H}}(\mathbf{x})
        = q\cdot \lambda_{\alpha,Q}(\mathcal{H}). 
    \end{align*}
    Combining it with~\eqref{equ:derivative-Lagrange}, we obtain 
    \begin{align*}
        \partial_{i} P_{Q, \mathcal{H}}(\mathbf{x})
        = q\cdot \lambda_{\alpha,Q}(\mathcal{H}) \cdot x_i^{\alpha-1}
        \quad\text{for every}\quad i \in [n], 
    \end{align*}
    which proves Lemma~\ref{LEMMA:Lagrange-multiplier}. 
\end{proof}

The following lemma provides an estimate for the difference between $\mu_n$ and $\mu_{n-1}$.
\begin{lemma}\label{LEMMA:spectral-difference}
    Suppose that $\mathcal{F}$ is $(\delta, N, Q)$-smooth and $\mathfrak{H}$ is $(\alpha, \delta, N, Q, \mathcal{F})$-balanced in spectral. 
    Then there exists $N_{\ref{LEMMA:spectral-difference}}$ such that for every $n \ge N_{\ref{LEMMA:spectral-difference}}$,
    \begin{align}\label{equ:Assumption-spectral-smooth}
        \left| \mu_{n} - \mu_{n-1} - \frac{q(\alpha-1)}{\alpha} \hat{\pi} n^{q-\frac{q}{\alpha}-1}\right|
        \le 5 \delta n^{q-\frac{q}{\alpha}-1}. 
    \end{align}
\end{lemma}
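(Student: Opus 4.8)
The plan is to derive \eqref{equ:Assumption-spectral-smooth} by sandwiching $\mu_n - \mu_{n-1}$ between two quantities, each controlled by the smoothness hypothesis \eqref{equ:Turan-smoothness} and the spectral balance hypothesis \eqref{equ:def-spectral-balance}. The key observation is that the function $t \mapsto t/n^{q/\alpha}$ should be the ``first-order model'' for $\mu_n$, so I expect to reduce the claim to the discrete derivative estimate
\begin{align*}
    \frac{\mathrm{inj}(n,Q,\mathcal{F})}{n^{q/\alpha}} - \frac{\mathrm{inj}(n-1,Q,\mathcal{F})}{(n-1)^{q/\alpha}}
    = \frac{q(\alpha-1)}{\alpha}\hat{\pi}\, n^{q - q/\alpha - 1} + O(\delta n^{q-q/\alpha-1}),
\end{align*}
after which a triangle inequality with \eqref{equ:def-spectral-balance} applied at $n$ and at $n-1$ finishes the job.

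First I would expand the left-hand side of the displayed equation above by writing $\mathrm{inj}(n,Q,\mathcal{F}) = \mathrm{inj}(n-1,Q,\mathcal{F}) + \big(\mathrm{inj}(n,Q,\mathcal{F}) - \mathrm{inj}(n-1,Q,\mathcal{F})\big)$ and splitting the difference quotient into two pieces: one piece is $\big(\mathrm{inj}(n,Q,\mathcal{F}) - \mathrm{inj}(n-1,Q,\mathcal{F})\big)/n^{q/\alpha}$, which by \eqref{equ:Turan-smoothness} equals $q\hat{\pi}\,n^{q-1}/n^{q/\alpha} \pm \delta n^{q-1-q/\alpha} = q\hat\pi\, n^{q-q/\alpha-1} \pm \delta n^{q-q/\alpha-1}$; the other piece is $\mathrm{inj}(n-1,Q,\mathcal{F})\big(n^{-q/\alpha} - (n-1)^{-q/\alpha}\big)$. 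For this second piece I would use $\mathrm{inj}(n-1,Q,\mathcal{F}) = \hat\pi\, n^q + O(\delta n^q)$ (via Fact~\ref{FACT:concentration-generalized-Turan-denisty}, absorbing the $n^{q-1}$ gap between $(n-1)^q$ and $n^q$) together with the elementary estimate $n^{-q/\alpha} - (n-1)^{-q/\alpha} = -\frac{q}{\alpha}n^{-q/\alpha-1} + O(n^{-q/\alpha-2})$ from a first-order Taylor expansion. Multiplying gives $-\frac{q}{\alpha}\hat\pi\, n^{q-q/\alpha-1} + O(\delta n^{q-q/\alpha-1})$. Adding the two pieces yields $\big(q - \frac{q}{\alpha}\big)\hat\pi\, n^{q-q/\alpha-1} = \frac{q(\alpha-1)}{\alpha}\hat\pi\, n^{q-q/\alpha-1}$ up to $O(\delta n^{q-q/\alpha-1})$, with an explicit constant in front of $\delta$ that I would track to land at $5\delta$ after the final step.

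Then I would finish by the triangle inequality: using \eqref{equ:def-spectral-balance} at $n$ and at $n-1$,
\begin{align*}
    \big|\mu_n - \mu_{n-1} - (\text{the }\mathrm{inj}\text{ difference quotient above})\big|
    \le \delta n^{q-q/\alpha-1} + \delta (n-1)^{q-q/\alpha-1}
    \le 2\delta n^{q-q/\alpha-1},
\end{align*}
and combining this with the estimate of the previous paragraph gives \eqref{equ:Assumption-spectral-smooth}, provided the accumulated constant is at most $5$; choosing $N_{\ref{LEMMA:spectral-difference}}$ large enough lets all the $O(n^{-1})$-type error terms (from the Taylor remainder and from replacing $(n-1)^q$ by $n^q$) be absorbed into, say, a single extra $\delta n^{q-q/\alpha-1}$, so the total is comfortably below $5\delta n^{q-q/\alpha-1}$.

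I do not expect a serious obstacle here; the lemma is essentially a bookkeeping exercise in comparing discrete derivatives. The only mildly delicate point is ensuring that the multiplicative errors of relative size $\delta$ coming from Fact~\ref{FACT:concentration-generalized-Turan-denisty} (applied with a suitably small parameter, which forces $N_{\ref{LEMMA:spectral-difference}}$ to depend on $\delta$) stay at scale $\delta n^{q-q/\alpha-1}$ rather than $n^{q-q/\alpha-1}$ — this works because they are multiplied against the $O(n^{-q/\alpha-1})$ factor $n^{-q/\alpha}-(n-1)^{-q/\alpha}$, so the $n^q$ size of $\mathrm{inj}(n-1,Q,\mathcal{F})$ is exactly cancelled. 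Keeping the constant in front of $\delta$ at $5$ rather than something larger is just a matter of choosing the auxiliary parameters in Fact~\ref{FACT:concentration-generalized-Turan-denisty} small enough and taking $n$ large.
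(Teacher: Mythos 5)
Your proposal is correct and takes essentially the same approach as the paper: both proofs reduce, via the triangle inequality and two applications of~\eqref{equ:def-spectral-balance} (contributing $2\delta n^{q-q/\alpha-1}$), to estimating the discrete difference quotient $\mathrm{inj}(n,Q,\mathcal{F})/n^{q/\alpha} - \mathrm{inj}(n-1,Q,\mathcal{F})/(n-1)^{q/\alpha}$, which is then handled using the smoothness hypothesis~\eqref{equ:Turan-smoothness}, the concentration estimate of Fact~\ref{FACT:concentration-generalized-Turan-denisty}, and a first-order expansion of the ratio of the two normalizations (contributing $3\delta n^{q-q/\alpha-1}$). The only differences are cosmetic: the paper factors the quotient as $\big[\mathrm{inj}(n,Q,\mathcal{F}) - \mathrm{inj}(n-1,Q,\mathcal{F})(n/(n-1))^{q/\alpha}\big]/n^{q/\alpha}$ and invokes Fact~\ref{FACT:concentration-generalized-Turan-denisty} with the explicit parameter $\alpha\delta/q$ to land precisely at $5\delta$, whereas you split additively into a smoothness piece plus a Taylor piece and leave the final constant tracking implicit — both are equivalent bookkeeping.
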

\begin{proof}[Proof of Lemma~\ref{LEMMA:spectral-difference}]
    Let $n$ be a sufficiently large integer. 
    Using~\eqref{equ:Turan-smoothness} and the inequality (see Fact~\ref{FACT:inequality-c})
    \begin{align*}
        \left(\frac{n}{n-1}\right)^{q/\alpha} 
        \ge 1 + \frac{q}{\alpha} n^{-1}, 
    \end{align*}
    we obtain 
    \begin{align*}
        & \quad \mathrm{inj}(n-1,Q,\mathcal{F}) \left(\frac{n}{n-1}\right)^{q/\alpha} \\
        & \ge \left(\mathrm{inj}(n,Q,\mathcal{F}) - q \hat{\pi}(Q,\mathcal{F}) n^{q-1} - \delta n^{q-1}\right) \left(1 + \frac{q}{\alpha} n^{-1} \right) \\
        & = \mathrm{inj}(n,Q,\mathcal{F}) + \frac{q}{\alpha} \frac{\mathrm{inj}(n,Q,\mathcal{F})}{n} - q \hat{\pi} n^{q-1} -\delta n^{q-1} - \left(q \hat{\pi}+\delta\right) \frac{q}{\alpha} \frac{1}{n} n^{q-1} \\
        & \ge \mathrm{inj}(n,Q,\mathcal{F}) + \frac{q}{\alpha} \frac{\mathrm{inj}(n,Q,\mathcal{F})}{n} - q \hat{\pi} n^{q-1} - 2 \delta n^{q-1}. 
    \end{align*}
    By choosing $n$ sufficiently large, we can ensure that~\eqref{equ:concentration-generalized-Turan-denisty} holds with $\delta$ there replaced by $\alpha \delta/q$ here. Combining this with the inequality above, we obtain
    \begin{align*}
        & \quad \left|\mathrm{inj}(n,Q,\mathcal{F}) - \mathrm{inj}(n-1,Q,\mathcal{F}) \left(\frac{n}{n-1}\right)^{q/\alpha} - \frac{q(\alpha-1)}{\alpha} \hat{\pi} n^{q-1}\right| \\
        & \le \left|- \frac{q}{\alpha} \frac{\mathrm{inj}(n,Q,\mathcal{F})}{n} + q \hat{\pi} n^{q-1} - \frac{q(\alpha-1)}{\alpha} \hat{\pi} n^{q-1}\right| + 2\delta n^{q-1} \\
        & = \left|  \hat{\pi} - \frac{\mathrm{inj}(n,Q,\mathcal{F})}{n^{q}}  \right| \frac{q}{\alpha} n^{q-1} + 2\delta n^{q-1} 
        \le \delta n^{q-1} + 2\delta n^{q-1} 
        = 3\delta n^{q-1}. 
    \end{align*}
    Combining this inequality with~\eqref{equ:def-spectral-balance}, we obtain 
    \begin{align*}
        & \quad \left| \mu_{n} - \mu_{n-1} - \frac{q(\alpha-1)}{\alpha} \hat{\pi} n^{q-\frac{q}{\alpha}-1}\right| \\
        & \le \left|\frac{\mathrm{inj}(n,Q,\mathcal{F})}{n^{q/\alpha}} - \frac{\mathrm{inj}(n-1,Q,\mathcal{F})}{(n-1)^{q/\alpha}} - \frac{q(\alpha-1)}{\alpha} \hat{\pi} n^{q-\frac{q}{\alpha}-1}\right|  + 2 \delta n^{q - \frac{q}{\alpha} - 1} \\
        & = \left|\mathrm{inj}(n,Q,\mathcal{F}) - \mathrm{inj}(n-1,Q,\mathcal{F}) \left(\frac{n}{n-1}\right)^{\frac{q}{\alpha}} - \frac{q(\alpha-1)}{\alpha} \hat{\pi} n^{q-1}\right|\frac{1}{n^{q/\alpha}}  + 2 \delta n^{q-\frac{q}{\alpha} - 1} \\
        & \le 3 \delta n^{q - \frac{q}{\alpha} -1} + 2\delta n^{q-\frac{q}{\alpha} - 1} 
        = 5\delta n^{q-\frac{q}{\alpha} - 1}, 
    \end{align*}
    which proves Lemma~\ref{LEMMA:spectral-difference}. 
\end{proof}

The following lemma shows that if $\mathcal{H}$ has a large $(\alpha, Q)$-spectral radius but contains a vertex with small degree, then the optimal vector $\mathbf{x}$ must have a coordinate with a small value.
\begin{lemma}\label{LEMMA:minimum-degree-weight}
    Let $\varepsilon \in [0,1]$ be real numbers. 
    There exists $N_{\ref{LEMMA:minimum-degree-weight}} = N_{\ref{LEMMA:minimum-degree-weight}}(\varepsilon)$ such that the following holds for every $n \ge N_{\ref{LEMMA:minimum-degree-weight}}$. 
    Suppose that $\mathcal{H}$ satisfies 
    \begin{align*}
        \lambda_{\alpha, Q}(\mathcal{H})\ge \mu_n
        \quad\text{and}\quad 
        \delta_{Q}(\mathcal{H})
        \le (1-\varepsilon) q \hat{\pi}  n^{q-1}. 
    \end{align*}
    Then there exists a coordinate $i \in [n]$ such that 
    \begin{align*}
        x_{i} 
        \le \left( 1-\frac{\hat{\pi} \varepsilon}{(q-1)\alpha} \right) n^{-\frac{1}{\alpha}}. 
    \end{align*}
\end{lemma}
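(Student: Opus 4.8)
The plan is to argue by contradiction. Suppose that every coordinate satisfies $x_i > \bigl(1 - \tfrac{\hat{\pi}\varepsilon}{(q-1)\alpha}\bigr) n^{-1/\alpha}$; since $\mathbf{x} \in \Delta_\alpha^{n-1}$ we always have $x_i \le 1$, but the real leverage is a pointwise \emph{lower} bound close to the ``uniform'' value $n^{-1/\alpha}$. The starting point is Lemma~\ref{LEMMA:Lagrange-multiplier}, which gives $\partial_i P_{Q,\mathcal{H}}(\mathbf{x}) = q\,\lambda_{\alpha,Q}(\mathcal{H})\,x_i^{\alpha-1}$ for every $i$. Pick a vertex $v$ realizing the minimum $Q$-degree, so $|\mathrm{Inj}(Q,\mathcal{H},v)| = \delta_Q(\mathcal{H}) \le (1-\varepsilon)q\hat{\pi} n^{q-1}$. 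I would bound $\partial_v P_{Q,\mathcal{H}}(\mathbf{x})$ from above using Fact~\ref{FACT:derivative-Lagrange-poly}, specifically the H\"older estimate~\eqref{equ:derivative-link-holder}: $\partial_v P_{Q,\mathcal{H}}(\mathbf{x}) \le |\mathrm{Inj}(Q,\mathcal{H},v)|^{\frac{\alpha-1}{\alpha}}\bigl(\sum_{S \in L^o_{Q,\mathcal{H}}(v)} x_S^\alpha\bigr)^{1/\alpha}$.

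The second ingredient is to control $\sum_{S \in L^o_{Q,\mathcal{H}}(v)} x_S^\alpha$. Each $S$ is an ordered $(q-1)$-tuple of vertices in $V(\mathcal{H}) \setminus \{v\}$ arising from some $\phi \in \mathrm{Inj}(Q,\mathcal{H},v)$, and $x_S = \prod_{i \in S} x_i \le \prod_{i \in S} 1$, but more usefully we can compare $\sum_S x_S^\alpha$ to a weighted count. Here the cleaner route is: $\sum_{S \in L^o_{Q,\mathcal{H}}(v)} x_S^\alpha \le \bigl(\max_{i} x_i^\alpha\bigr)^{q-1}\cdot |L^o_{Q,\mathcal{H}}(v)| \le |\mathrm{Inj}(Q,\mathcal{H},v)|$ when all $x_i \le 1$ — wait, that is too lossy; instead I would use that each $x_i^\alpha \le 1$ so $x_S^\alpha \le 1$, giving $\sum_S x_S^\alpha \le q\,|\mathrm{Inj}(Q,\mathcal{H},v)|$ (the factor $q$ from link multiplicity), but to get the factor $n^{-q/\alpha\cdot(\text{something})}$ we actually want to absorb the weight. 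The right move: bound each $x_S^\alpha \le (\sum_{i\in S} x_i^\alpha/(q-1))^{q-1}$ is not monotone-friendly; simplest is to note $x_S^\alpha = \prod_{i\in S} x_i^\alpha$ and sum over all $\phi$, which after reindexing is at most a constant times $\bigl(\sum_{i} x_i^\alpha\bigr)^{q-1}/\text{stuff}$ — but $\sum_i x_i^\alpha = 1$, so in fact $\sum_{S \in L^o_{Q,\mathcal{H}}(v)} x_S^\alpha \le q\cdot\bigl(\sum_{i\in[n]} x_i^\alpha\bigr)^{q-1} = q$, using that distinct coordinates in each tuple and expanding. Plugging in: $\partial_v P_{Q,\mathcal{H}}(\mathbf{x}) \le \bigl((1-\varepsilon)q\hat\pi n^{q-1}\bigr)^{\frac{\alpha-1}{\alpha}}\cdot q^{1/\alpha}$.

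Now combine with Lemma~\ref{LEMMA:Lagrange-multiplier} and the assumed lower bound $x_v > (1-\tfrac{\hat\pi\varepsilon}{(q-1)\alpha})n^{-1/\alpha}$, together with the lower bound $\lambda_{\alpha,Q}(\mathcal{H}) \ge \mu_n$ and Fact~\ref{FACT:mu-n-lower-bound} which gives $\mu_n \ge (\hat\pi - o(1))n^{q-q/\alpha}$. This yields
\[
q\bigl(\hat\pi - o(1)\bigr)n^{q-\frac{q}{\alpha}}\Bigl(1-\tfrac{\hat\pi\varepsilon}{(q-1)\alpha}\Bigr)^{\alpha-1}n^{-\frac{\alpha-1}{\alpha}} \le q\,\bigl((1-\varepsilon)q\hat\pi n^{q-1}\bigr)^{\frac{\alpha-1}{\alpha}}q^{1/\alpha},
\]
and I would check the powers of $n$ match: left side $n^{q - q/\alpha - (\alpha-1)/\alpha} = n^{(q-1)(\alpha-1)/\alpha}$, right side $n^{(q-1)(\alpha-1)/\alpha}$ — they agree, so dividing out the $n$-power reduces everything to a constant inequality. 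After cancellation this reads, up to lower-order terms, $\hat\pi^{1/\alpha}(1-\tfrac{\hat\pi\varepsilon}{(q-1)\alpha})^{\alpha-1} \le (1-\varepsilon)^{\frac{\alpha-1}{\alpha}}\hat\pi^{\frac{\alpha-1}{\alpha}}\cdot(\text{constant})$; the task is to verify that the chosen constant $1 - \tfrac{\hat\pi\varepsilon}{(q-1)\alpha}$ is small enough to force a contradiction with $(1-\varepsilon)^{(\alpha-1)/\alpha} < 1 - \tfrac{\alpha-1}{\alpha}\varepsilon + O(\varepsilon^2)$ by Bernoulli-type estimates, for all $n$ large.

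The main obstacle I anticipate is bookkeeping the exponents and constants so that the final one-variable inequality genuinely closes — in particular getting the clean bound $\sum_{S\in L^o_{Q,\mathcal{H}}(v)} x_S^\alpha \le q$ (or a similarly innocuous constant) rather than something that scales with $n$, and then checking that the specific slack $\tfrac{\hat\pi\varepsilon}{(q-1)\alpha}$ chosen in the statement is exactly calibrated to beat the $\tfrac{\alpha-1}{\alpha}\varepsilon$ coming from the right-hand side after taking $\alpha$-th roots. A secondary subtlety is handling the $o(1)$ terms from Fact~\ref{FACT:mu-n-lower-bound} and Fact~\ref{FACT:concentration-generalized-Turan-denisty} uniformly: one fixes $\delta'$ small in terms of $\varepsilon$, $\hat\pi$, $q$, $\alpha$ first, and only then chooses $N_{\ref{LEMMA:minimum-degree-weight}}$ large, so the error terms do not swamp the constant gap. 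Everything else is routine substitution.
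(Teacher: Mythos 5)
Your skeleton is right (argue by contradiction, take a minimum-$Q$-degree vertex, combine Lemma~\ref{LEMMA:Lagrange-multiplier} and the H\"older estimate from Fact~\ref{FACT:derivative-Lagrange-poly}, compare against $\mu_n$ via Fact~\ref{FACT:mu-n-lower-bound}), and your exponent bookkeeping is correct. But there is a genuine gap at the one place you flag as a "secondary" bookkeeping worry, namely the bound on $\sum_{S\in L^o_{Q,\mathcal{H}}(v)}x_S^\alpha$. You settle for the crude estimate $\sum_{S\in L^o_{Q,\mathcal{H}}(v)}x_S^\alpha \le q\bigl(\sum_i x_i^\alpha\bigr)^{q-1} = q$, obtained by extending the sum to all of $q\cdot V^{q-1}$. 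Plugging this in, after simplification your upper bound on $\lambda_{\alpha,Q}(\mathcal{H})\,x_v^{\alpha-1}$ becomes $(1-\varepsilon)^{(\alpha-1)/\alpha}\,\hat\pi^{(\alpha-1)/\alpha}\,n^{(q-1)(\alpha-1)/\alpha}$, whereas your lower bound (from $\lambda_{\alpha,Q}(\mathcal{H})\ge\mu_n$ and the assumed coordinate lower bound) is roughly $(1-\delta)^\alpha\,\hat\pi\,n^{(q-1)(\alpha-1)/\alpha}$. To derive a contradiction you would need $(1-\delta)^\alpha\,\hat\pi^{1/\alpha} > (1-\varepsilon)^{(\alpha-1)/\alpha}$, and the stray factor $\hat\pi^{1/\alpha}$ on the left is fatal: when $\hat\pi$ is small (say $\hat\pi=0.01$, $\alpha=2$, $\varepsilon=1/2$) the left side is far below the right side, so no contradiction appears no matter how large $n$ is.

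The missing idea is the paper's Claim~\ref{CLAIM:Lemma-link-sum-upper-bound}, which upgrades the bound to $\sum_{S\in L^o_{Q,\mathcal{H}}(v)}x_S^\alpha \le q\hat\pi$ by a complementary-counting argument. Crucially, it uses the contradiction hypothesis $x_j\ge(1-\delta)n^{-1/\alpha}$ for all $j$ \emph{here}, not just in the final comparison: the complement $q\cdot V^{q-1}\setminus L^o_{Q,\mathcal{H}}(v)$ has at least $qn^{q-1}-(1-\varepsilon)q\hat\pi n^{q-1}$ ordered tuples (by the min-$Q$-degree hypothesis), each contributing at least $\bigl((1-\delta)n^{-1/\alpha}\bigr)^{(q-1)\alpha}$, and subtracting from the total $q$ gives exactly $q\hat\pi$ after Bernoulli with the chosen $\delta=\hat\pi\varepsilon/((q-1)\alpha)$. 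With $q\hat\pi$ in place of $q$ the extra $\hat\pi^{1/\alpha}$ cancels and the final comparison reduces to $(1-\alpha\delta)>(1-\varepsilon)$, i.e. $\hat\pi/(q-1)<1$, which is clean. So your approach needs this second use of the contradiction hypothesis; without it the link sum is overcounted by a factor that scales inversely with $\hat\pi$ and the argument does not close.
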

\begin{proof}[Proof of Lemma~\ref{LEMMA:minimum-degree-weight}]
    Fix $\varepsilon \in [0, 1]$, and let $\delta \coloneqq \frac{\hat{\pi} \varepsilon}{(q-1)\alpha}$.
    Let $n$ be sufficiently large and suppose that $\mathcal{H}$ satisfies the assumptions in Lemma~\ref{LEMMA:minimum-degree-weight}. 
    Let $i_{\ast} \in V(\mathcal{H})$ be a vertex with 
    \begin{align}\label{equ:min-Q-degree-upper-bound}
        d_{Q,\mathcal{H}}(i_{\ast}) 
        \le (1-\varepsilon) q \hat{\pi} n^{q-1}. 
    \end{align}
    %
    Suppose to the contrary that 
    \begin{align}\label{equ:weight-lower-bound}
        x_{j} 
        \ge (1-\delta) n^{-1/\alpha}
        \quad\text{for every}\quad j \in [n]. 
    \end{align}
    \begin{claim}\label{CLAIM:Lemma-link-sum-upper-bound}
        We have $\sum_{S \in L_{Q,\mathcal{H}}^{o}(i_{\ast})} x_{S}^{\alpha} \le q  \hat{\pi}$.
    \end{claim}
    \begin{proof}[Proof of Claim~\ref{CLAIM:Lemma-link-sum-upper-bound}]
        Let $V^{q-1}$ denote the set of all tuples of length $q-1$ with entries in $V$, and let $q\cdot V^{q-1}$ denote the multiset in which each element of $V^{q-1}$ appears exactly $q$ times. 
        Let 
        \begin{align*}
            \mathcal{S} \coloneqq L_{Q,\mathcal{H}}^{o}(i_{\ast})
            \quad\text{and}\quad 
            \bar{\mathcal{S}} \coloneqq q\cdot V^{q-1} \setminus L_{Q,\mathcal{H}}^{o}(i_{\ast}). 
        \end{align*}
        Note that 
        \begin{align}\label{equ:sum-q}
            \sum_{S \in q \cdot V^{q-1}} x_{S}^{\alpha}
            = q\left(x_{1}^{\alpha} + \cdots + x_{n}^{\alpha}\right)^{q-1} 
            = q. 
        \end{align}
        Using~\eqref{equ:min-Q-degree-upper-bound},~\eqref{equ:weight-lower-bound}, and Bernoulli's inequality $(1-\delta)^{(q-1)\alpha} \ge 1- (q-1)\alpha \delta$, we obtain 
        \begin{align*}
            \sum_{S \in \bar{\mathcal{S}}} x_{S}^{\alpha} 
            & \ge \left(q n^{q-1} - (1-\varepsilon) q  \hat{\pi}  n^{q-1} \right) \left(\frac{1-\delta}{n^{1/\alpha}}\right)^{(q-1)\alpha} \\
            & = q (1-\delta)^{(q-1)\alpha} - (1-\varepsilon) q  \hat{\pi} (1-\delta)^{(q-1)\alpha} 
            \ge q - q(q-1)\alpha \delta  - (1-\varepsilon) q  \hat{\pi}.  
        \end{align*}
        Combining it with~\eqref{equ:sum-q}, we obtain 
        \begin{align*}
            \sum_{S \in \mathcal{S}} x_{S}^{\alpha}
            = \sum_{S \in q \cdot V^{q-1}} x_{S}^{\alpha} - \sum_{S \in \bar{\mathcal{S}}} x_{S}^{\alpha} 
            & \le q - \big( q - q(q-1)\alpha \delta  - (1-\varepsilon) q  \hat{\pi} \big) \\
            & = q  \hat{\pi} + q(q-1)\alpha \delta - \varepsilon q  \hat{\pi}
            = q  \hat{\pi},
        \end{align*}
        which proves Claim~\ref{CLAIM:Lemma-link-sum-upper-bound}. 
    \end{proof}

    Combining Claim~\ref{CLAIM:Lemma-link-sum-upper-bound} with Lemma~\ref{LEMMA:Lagrange-multiplier} and Fact~\ref{FACT:derivative-Lagrange-poly}, we obtain 
    \begin{align*}
        \lambda_{\alpha,Q}(\mathcal{H}) \cdot x_{i_{\ast}}^{\alpha-1}
        = \frac{\partial_{i_{\ast}} P_{Q, \mathcal{H}}(\mathbf{x})}{q} 
        & \le \frac{\big(d_{Q,\mathcal{H}}(i_{\ast})\big)^{\frac{\alpha-1}{\alpha}}}{q} \left(\sum_{S \in \mathcal{S}} x_{S}^{\alpha}\right)^{1/\alpha} \\
        & \le \frac{1}{q} \left((1-\varepsilon)q \hat{\pi} n^{q-1}\right)^{\frac{\alpha-1}{\alpha}} \left(q \hat{\pi}\right)^{\frac{1}{\alpha}} \\
        & =  (1-\varepsilon)^{\frac{\alpha-1}{\alpha}} \hat{\pi} n^{(q-1)\frac{\alpha-1}{\alpha}}
        \le (1-\varepsilon) \hat{\pi} n^{(q-1)\frac{\alpha-1}{\alpha}}.
    \end{align*}

    On the other hand, combining the assumption $\lambda_{\alpha,Q}(\mathcal{H}) \ge \mu_{n}$ with~\eqref{equ:weight-lower-bound} and Fact~\ref{FACT:mu-n-lower-bound}, we obtain 
    \begin{align*}
        \lambda_{\alpha,Q}(\mathcal{H}) \cdot x_{i_{\ast}}^{\alpha-1}
        \ge \mu_{n} x_{i_{\ast}}^{\alpha-1}
        & \ge (1-\delta) \hat{\pi}  n^{q - \frac{q}{\alpha}} \left(\frac{1-\delta}{n^{1/\alpha}}\right)^{\alpha-1} \\
        & = (1-\delta)^{\alpha} \hat{\pi} n^{(q-1)\frac{\alpha-1}{\alpha}} \\
        & \ge (1-\alpha\delta) \hat{\pi} n^{(q-1)\frac{\alpha-1}{\alpha}}
        > (1- \varepsilon) \hat{\pi} n^{(q-1)\frac{\alpha-1}{\alpha}}, 
    \end{align*}
    which contradicts the preceding inequality. 
    This completes the proof of Lemma~\ref{LEMMA:minimum-degree-weight}. 
\end{proof}

The following lemma provides a lower bound on the $(\alpha, Q)$-spectral radius of the induced subgraph obtained by removing a vertex whose weight is small in the optimal vector $\mathbf{x}$.
\begin{lemma}\label{LEMMA:delete-light-vertex}
    Let $\delta \in (0,1)$ be a real number and $N \ge 1$ be an integer. 
    There exists $N_{\ref{LEMMA:delete-light-vertex}} = N_{\ref{LEMMA:delete-light-vertex}}(\delta, N)$ such that the following holds for all $n \ge N_{\ref{LEMMA:delete-light-vertex}}$. 
    Suppose that the nonnegative optimal vector $\mathbf{x}$ contains a coordinate $x_i$ satisfying $x_{i} \le (1-\delta)n^{-1/\alpha}$. 
    Then the induced subgraph $\mathcal{H}-i$ satisfies 
    \begin{align}\label{equ:LEMMA:delete-light-vertex-a}
        \lambda_{\alpha, Q}\left( \mathcal{H}-i \right)
        \ge \left(1-\frac{q(\alpha-1)}{\alpha}\left(1-\frac{\delta}{2}\right)\frac{1}{n}\right) \lambda_{\alpha, Q }(\mathcal{H}).
    \end{align}
    Suppose further that $\mathcal{F}$ is $(\xi/5, N, Q)$-smooth, $\mathfrak{H}$ is $(\alpha, \xi/5, N, Q, \mathcal{F})$-balanced in spectral for some constant $\xi < \frac{q(\alpha-1) \hat{\pi} \delta}{4 \alpha}$, and $\lambda_{\alpha, Q}(\mathcal{H}) \ge \mu_{n}$.  Then
    \begin{align}\label{equ:LEMMA:delete-light-vertex-b}
        \lambda_{\alpha, Q}\left( \mathcal{H}-i \right) 
        > \mu_{n-1}. 
    \end{align}
\end{lemma}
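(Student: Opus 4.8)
The plan is to establish \eqref{equ:LEMMA:delete-light-vertex-a} by restricting the nonnegative optimal vector $\mathbf{x}$ to the coordinates other than $i$ and renormalizing, and then to deduce \eqref{equ:LEMMA:delete-light-vertex-b} by feeding \eqref{equ:LEMMA:delete-light-vertex-a} into Lemma~\ref{LEMMA:spectral-difference} and Fact~\ref{FACT:mu-n-lower-bound}. Note that \eqref{equ:LEMMA:delete-light-vertex-a} is a purely spectral statement, so $\mathcal{F}$-freeness plays no role there.

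For \eqref{equ:LEMMA:delete-light-vertex-a}, write $\mathbf{x}' \coloneqq (x_j)_{j \in [n]\setminus\{i\}}$. Partitioning $\mathrm{Inj}(Q,\mathcal{H})$ according to whether $i$ lies in the image of $\phi$ gives
\[
    P_{Q,\mathcal{H}}(\mathbf{x}) = P_{Q,\mathcal{H}-i}(\mathbf{x}') + x_i \cdot \partial_i P_{Q,\mathcal{H}}(\mathbf{x}),
\]
since the maps through $i$ contribute $x_i \sum_{S \in L_{Q,\mathcal{H}}^{o}(i)} x_S = x_i \cdot \partial_i P_{Q,\mathcal{H}}(\mathbf{x})$ by \eqref{equ:derivative-link}. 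By Lemma~\ref{LEMMA:Lagrange-multiplier} the last term equals $q \lambda_{\alpha,Q}(\mathcal{H}) x_i^{\alpha}$, so $P_{Q,\mathcal{H}-i}(\mathbf{x}') = \lambda_{\alpha,Q}(\mathcal{H})(1 - q x_i^{\alpha})$. Since $\sum_{j \ne i} x_j^{\alpha} = 1 - x_i^{\alpha}$, the vector $\mathbf{x}'/(1 - x_i^{\alpha})^{1/\alpha}$ lies in $\Delta_{\alpha}^{n-2}$ over $V(\mathcal{H}-i)$, and homogeneity of degree $q$ of $P_{Q,\mathcal{H}-i}$ yields
\[
    \lambda_{\alpha,Q}(\mathcal{H}-i) \ge \frac{P_{Q,\mathcal{H}-i}(\mathbf{x}')}{(1 - x_i^{\alpha})^{q/\alpha}} = \lambda_{\alpha,Q}(\mathcal{H}) \cdot \frac{1 - q x_i^{\alpha}}{(1 - x_i^{\alpha})^{q/\alpha}}.
\]
It then remains to verify the scalar inequality $\frac{1-qt}{(1-t)^{q/\alpha}} \ge 1 - \frac{q(\alpha-1)}{\alpha}\bigl(1-\tfrac{\delta}{2}\bigr)\tfrac1n$ for all $t \in [0, (1-\delta)^{\alpha}/n]$. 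A short computation of the logarithmic derivative (using $t = O(1/n)$ and $\alpha > 1$) shows the left-hand side is decreasing in $t$ on this interval once $n$ is large, so it suffices to check it at $t = (1-\delta)^{\alpha}/n$; there Bernoulli's inequality $(1-t)^{-q/\alpha} \ge 1 + \tfrac{q}{\alpha}t$ gives the lower bound $1 - \tfrac{q(\alpha-1)}{\alpha}(1-\delta)^{\alpha}\tfrac1n - O(n^{-2})$, and $(1-\delta)^{\alpha} \le 1-\delta < 1 - \tfrac{\delta}{2}$ (valid since $\alpha > 1$) leaves a slack of constant order in the $1/n$ coefficient that absorbs the $O(n^{-2})$ error for $n$ large.

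For \eqref{equ:LEMMA:delete-light-vertex-b}, combining $\lambda_{\alpha,Q}(\mathcal{H}) \ge \mu_n$ with \eqref{equ:LEMMA:delete-light-vertex-a} it suffices to prove $\bigl(1 - \tfrac{q(\alpha-1)}{\alpha}(1-\tfrac{\delta}{2})\tfrac1n\bigr)\mu_n > \mu_{n-1}$, equivalently $\mu_n - \mu_{n-1} > \tfrac{q(\alpha-1)}{\alpha}(1-\tfrac{\delta}{2})\tfrac{\mu_n}{n}$. Lemma~\ref{LEMMA:spectral-difference} applied with parameter $\xi/5$ bounds the left-hand side below by $\bigl(\tfrac{q(\alpha-1)}{\alpha}\hat\pi - \xi\bigr) n^{q-q/\alpha-1}$, while Fact~\ref{FACT:mu-n-lower-bound} bounds $\mu_n$ above by $(\hat\pi + \eta)n^{q-q/\alpha}$ for any prescribed $\eta > 0$ once $n$ is large; hence the desired inequality reduces to $\tfrac{q(\alpha-1)}{\alpha}\hat\pi - \xi > \tfrac{q(\alpha-1)}{\alpha}(1-\tfrac{\delta}{2})(\hat\pi + \eta)$, i.e. $\tfrac{q(\alpha-1)}{\alpha}\cdot\tfrac{\delta}{2}\hat\pi > \xi + \tfrac{q(\alpha-1)}{\alpha}(1-\tfrac{\delta}{2})\eta$. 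The hypothesis $\xi < \tfrac{q(\alpha-1)\hat\pi\delta}{4\alpha}$ leaves a positive slack of at least $\tfrac{q(\alpha-1)}{\alpha}\cdot\tfrac{\delta}{4}\hat\pi$ on the right, so picking $\eta$ small (depending only on $\alpha, q, \hat\pi, \delta, \xi$) and then $n$ large finishes the argument.

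The scalar estimate in \eqref{equ:LEMMA:delete-light-vertex-a} is entirely routine; the only genuine care is the constant bookkeeping in \eqref{equ:LEMMA:delete-light-vertex-b}, so that the hypothesis $\xi < \tfrac{q(\alpha-1)\hat\pi\delta}{4\alpha}$ is precisely what the comparison consumes, and the choice of $N_{\ref{LEMMA:delete-light-vertex}}$ large enough that the monotonicity of the scalar function, Lemma~\ref{LEMMA:spectral-difference}, and Fact~\ref{FACT:mu-n-lower-bound} all apply simultaneously.
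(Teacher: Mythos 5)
Your proposal is correct and follows essentially the same route as the paper: for~\eqref{equ:LEMMA:delete-light-vertex-a}, the decomposition $P_{Q,\mathcal{H}}(\mathbf{x}) = P_{Q,\mathcal{H}-i}(\mathbf{x}') + x_i\,\partial_i P_{Q,\mathcal{H}}(\mathbf{x})$ together with Lemma~\ref{LEMMA:Lagrange-multiplier}, renormalization of $\mathbf{x}'$, and the Bernoulli estimate $(1-t)^{-q/\alpha} \ge 1 + \tfrac{q}{\alpha}t$ are exactly the steps in the paper's argument (the paper skips the monotonicity-in-$t$ observation and simply substitutes the upper bound for $x_i^\alpha$ directly, which is a cosmetic difference). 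For~\eqref{equ:LEMMA:delete-light-vertex-b}, your reduction to the inequality $\mu_n - \mu_{n-1} > \tfrac{q(\alpha-1)}{\alpha}(1-\tfrac{\delta}{2})\tfrac{\mu_n}{n}$ and the bookkeeping via Lemma~\ref{LEMMA:spectral-difference} and Fact~\ref{FACT:mu-n-lower-bound} mirror the paper's computation, with the only difference that you bound $\mu_n$ from above whereas the paper bounds $\mu_{n-1}$ from above; either variant consumes exactly the slack provided by $\xi < \tfrac{q(\alpha-1)\hat{\pi}\delta}{4\alpha}$.
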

\begin{proof}[Proof of Lemma~\ref{LEMMA:delete-light-vertex}]
    Let $n$ be sufficiently large. 
    By relabeling the vertices of $\mathcal{H}$ if necessary, we may assume that $x_n \le (1-\delta) n^{-1/\alpha}$. 
    Let $\mathcal{G}$ denote the induced subgraph of $\mathcal{H}$ on $[n-1]$. 
    Let 
    \begin{align*}
        \mathbf{y} 
        \coloneqq  \left(\frac{x_1}{\left(1-x_n^{\alpha}\right)^{1/\alpha}}, \ldots, \frac{x_{n-1}}{\left(1-x_n^{\alpha}\right)^{1/\alpha}} \right) 
        \in \Delta_{n-2}^{\alpha}. 
    \end{align*}
    By Lemma~\ref{LEMMA:Lagrange-multiplier}, we have 
    \begin{align*}
        \lambda_{\alpha, Q}(\mathcal{H})
        = P_{Q,\mathcal{H}}(x_1, \ldots, x_n)
        & = P_{Q,\mathcal{G}}(x_1, \ldots, x_{n-1}) + x_{n} \cdot \partial_{n}P_{Q,\mathcal{H}}(x_1, \ldots, x_n) \\
        & = P_{Q,\mathcal{G}}(x_1, \ldots, x_{n-1}) + q \cdot \lambda_{\alpha,Q}(\mathcal{H}) \cdot x_{n}^{\alpha}, 
    \end{align*}
    which implies that 
    \begin{align}\label{equ:PG-PH}
        P_{Q,\mathcal{G}}(x_1, \ldots, x_{n-1})
        = \left( 1- q x_{n}^{\alpha} \right) \lambda_{\alpha, Q}(\mathcal{H}). 
    \end{align}
    On the other hand, we have 
    \begin{align*}
        P_{Q,\mathcal{G}}(x_1, \ldots, x_{n-1})
        & = \left(1-x_n^{\alpha}\right)^{q/\alpha} \cdot P_{Q,\mathcal{G}}\left(\mathbf{y}\right)
        \le \left(1-x_n^{\alpha}\right)^{q/\alpha} \cdot \lambda_{\alpha, Q}(\mathcal{G}).
    \end{align*}
    Combining it with~\eqref{equ:PG-PH}, we obtain 
    \begin{align*}
        \lambda_{\alpha, Q}(\mathcal{G})
        & \ge \left(1-x_n^{\alpha}\right)^{-q/\alpha} \cdot P_{Q,\mathcal{G}}(x_1, \ldots, x_{n-1}) 
        = \left(1-x_n^{\alpha}\right)^{-q/\alpha} \left( 1- q x_{n}^{\alpha} \right) \lambda_{\alpha, Q}(\mathcal{H}). 
    \end{align*}

    Since $\left(1-x_n^{\alpha}\right)^{-q/\alpha} \ge 1 + q x_{n}^{\alpha}/\alpha$ (see Fact~\ref{FACT:inequality-a}) and $x_n \le (1-\delta)n^{-1/\alpha}$, it follows from the inequality above that 
    \begin{align}\label{equ:Lemma-lambda-G-lower-bound}
        \lambda_{\alpha, Q}(\mathcal{G})
        & \ge \left(1+\frac{q}{\alpha} x_{n}^{\alpha}\right) \left( 1- q x_{n}^{\alpha} \right) \lambda_{\alpha, Q}(\mathcal{H}) \notag \\
        & = \left(1 - \frac{q(\alpha-1)}{\alpha}x_{n}^{\alpha} - \frac{q^2}{\alpha} x_{n}^{2 \alpha}\right) \lambda_{\alpha, Q}(\mathcal{H}) \notag \\
        & \ge \left(1 - \frac{q(\alpha-1)}{\alpha} \left( \frac{1-\delta}{n^{1/\alpha}} \right)^{\alpha}  - \frac{q^2}{\alpha} \left( \frac{1-\delta}{n^{1/\alpha}} \right)^{2 \alpha} \right) \lambda_{\alpha, Q}(\mathcal{H}) \notag \\
        & \ge \left(1 - \frac{q(\alpha-1)}{\alpha} \frac{1-\delta}{n}  - \frac{q^2}{\alpha} \frac{1}{n^2} \right) \lambda_{\alpha, Q}(\mathcal{H}) \notag \\
        & \ge \left(1 - \frac{q(\alpha-1)}{\alpha} \left( 1-\frac{\delta}{2} \right) \frac{1}{n} \right) \lambda_{\alpha, Q}(\mathcal{H}), 
    \end{align}
    which proves~\eqref{equ:LEMMA:delete-light-vertex-a}.

    Now suppose that $\mathcal{F}$ is $(\xi/5, N, Q)$-smooth, $\mathfrak{H}$ is $(\alpha, \xi/5, N, Q, \mathcal{F})$-balanced in spectral for some constant $\xi < \frac{q(\alpha-1) \hat{\pi} \delta}{4 \alpha}$, and $\lambda_{\alpha, Q}(\mathcal{H}) \ge \mu_{n}$. 
    It follows from Lemma~\ref{LEMMA:spectral-difference} that 
    \begin{align*}
        \lambda_{\alpha, Q}(\mathcal{H}) 
        \ge \mu_{n}
        \ge \mu_{n-1} + \left( \frac{q(\alpha-1)}{\alpha} \hat{\pi} - \xi \right)n^{q-\frac{q}{\alpha}-1}.
    \end{align*}
    Combining it with~\eqref{equ:Lemma-lambda-G-lower-bound}, we obtain 
    \begin{align}\label{equ:lambda-G-lower-bound}
        \lambda_{\alpha, Q}(\mathcal{G})
        & \ge \left(1 - \frac{q(\alpha-1)}{\alpha} \left( 1-\frac{\delta}{2} \right) \frac{1}{n} \right) \left(\mu_{n-1} + \left( \frac{q(\alpha-1)}{\alpha} \hat{\pi} - \xi \right)n^{q-\frac{q}{\alpha}-1} \right) \notag \\
        & = \mu_{n-1}  - \frac{q(\alpha-1)}{\alpha} \left( 1-\frac{\delta}{2} \right) \frac{\mu_{n-1}}{n}  + \left( \frac{q(\alpha-1)}{\alpha} \hat{\pi} - \xi \right)n^{q-\frac{q}{\alpha}-1}  \notag \\
        & \quad - \frac{q(\alpha-1)}{\alpha} \left( 1-\frac{\delta}{2} \right) \left( \frac{q(\alpha-1)}{\alpha} \hat{\pi} - \xi \right)n^{q-\frac{q}{\alpha}-2}. 
    \end{align}
    By Fact~\ref{FACT:mu-n-lower-bound}, we can assume from the beginning that $n$ is sufficiently large so that
    \begin{align*}
        \mu_{n-1} 
        \le \left(1+\frac{\delta}{5}\right) \hat{\pi} (n-1)^{q-\frac{q}{\alpha}} 
        \le \left(1+\frac{\delta}{5}\right) \hat{\pi} n^{q-\frac{q}{\alpha}},
    \end{align*}
    Combining it with the inequality $-(1-\delta/2)(1+\delta/5) \ge -(1-\delta/4)$, we obtain 
    \begin{align*}
        & \quad - \frac{q(\alpha-1)}{\alpha} \left( 1-\frac{\delta}{2} \right) \frac{\mu_{n-1}}{n}  + \left( \frac{q(\alpha-1)}{\alpha} \hat{\pi} - \xi \right)n^{q-\frac{q}{\alpha}-1} \\
        & \ge - \frac{q(\alpha-1)}{\alpha} \left( 1-\frac{\delta}{2} \right) \left(1+\frac{\delta}{5}\right) \hat{\pi} n^{q-\frac{q}{\alpha}}  + \left( \frac{q(\alpha-1)}{\alpha} \hat{\pi} - \xi \right)n^{q-\frac{q}{\alpha}-1}  \\
        & \ge - \frac{q(\alpha-1)}{\alpha} \left(1-\frac{\delta}{4}\right) \hat{\pi} n^{q-\frac{q}{\alpha} -1}  + \left( \frac{q(\alpha-1)}{\alpha} \hat{\pi} - \xi \right) n^{q-\frac{q}{\alpha}-1}   \\
        & = \left(\frac{q(\alpha-1) \delta}{4 \alpha} \hat{\pi} - \xi \right) n^{q-\frac{q}{\alpha}-1} 
        > \frac{q(\alpha-1)}{\alpha} \left( 1-\frac{\delta}{2} \right) \left( \frac{q(\alpha-1)}{\alpha} \hat{\pi} - \xi \right)\frac{1}{n} n^{q-\frac{q}{\alpha}-1}.
    \end{align*}
    Combining this inequality with~\eqref{equ:lambda-G-lower-bound}, we obtain that $\lambda_{\alpha, Q}(\mathcal{G}) > \mu_{n-1}$.
    This proves~\eqref{equ:LEMMA:delete-light-vertex-b}. 
\end{proof}

\subsection{Proof of Theorem~\ref{THM:generalized-spectral}}\label{SUBSEC:Proof-generlized-spectral}
We are now ready to present the proof of Theorem~\ref{THM:generalized-spectral}.
\begin{proof}[Proof of Theorem~\ref{THM:generalized-spectral}] 
    Fix $\varepsilon > 0$. Let $\delta \coloneqq  \frac{\hat{\pi} \varepsilon}{(q-1)\alpha}$ and $\xi \coloneqq \frac{q(\alpha-1) \hat{\pi} \delta}{5 \alpha} < \frac{q(\alpha-1) \hat{\pi} \delta}{4 \alpha}$. 
    Suppose that $M$ is an integer satisfying 
    \begin{enumerate}[label=(\roman*)]
        \item\label{Assump:proof-main-a} $\mathcal{F}$ is $(\xi/5, M, Q)$-smooth,
        \item\label{Assump:proof-main-b} $\mathcal{F}$ is $(\varepsilon, M, Q)$-degree stable with respect to $\mathfrak{H}$, and 
        \item\label{Assump:proof-main-c} $\mathfrak{H}$ is $(\alpha, \xi/5, M, Q, \mathcal{F})$-balanced in spectral. 
    \end{enumerate}
    Fix a sufficiently large integer $N$ so that $N \ge \max\left\{M,~N_{\ref{LEMMA:minimum-degree-weight}}(\varepsilon),~N_{\ref{LEMMA:delete-light-vertex}}(\delta, M)\right\}$, where $N_{\ref{LEMMA:minimum-degree-weight}}(\varepsilon)$ and $N_{\ref{LEMMA:delete-light-vertex}}(\delta, M)$ are constants given by Lemmas~\ref{LEMMA:minimum-degree-weight} and~\ref{LEMMA:delete-light-vertex}. 
    Let $n$ be a sufficiently large integer so that, in particular, 
    \begin{align}\label{equ:n-lower-bound}
        \hat{\pi} e^{-3q^2} n^{\frac{\delta q (\alpha-1)}{2\alpha}} > 2 N^{q}.
    \end{align}
    Suppose to the contrary that there exists an $\mathcal{F}$-free $r$-graph $\mathcal{H}$ on $[n]$ such that 
    \begin{align*}
        \lambda_{\alpha,Q}(\mathcal{H})
        \ge \lambda_{\alpha, Q}(n,\mathfrak{H})
        \quad\text{but}\quad 
        \mathcal{H} \not\in \mathfrak{H}.
    \end{align*}
    Then it follows from Assumption~\ref{Assump:proof-main-b} that 
    \begin{align}\label{equ:min-Q-degree-Hn}
        \delta_{Q}(\mathcal{H})
        \le (1-\varepsilon) q \hat{\pi}  n^{q-1}. 
    \end{align}
    Recall that $\mathbf{x} = (x_1, \ldots, x_{n}) \in \mathrm{OPT}_{\alpha, Q}(\mathcal{H})$ is a nonnegative optimal vector. 
    Let $\mathcal{H}_{n} \coloneqq \mathcal{H}$. 
    It follows from~\eqref{equ:min-Q-degree-Hn} and Lemma~\ref{LEMMA:minimum-degree-weight} that there exists a coordinate $i_{\ast} \in V(\mathcal{H})$ such that 
    \begin{align}\label{equ:x-i-ast-small}
        x_{i_{\ast}} \le (1-\delta)n^{-1/\alpha}. 
    \end{align}
    Define $\mathcal{H}_{n-1} \coloneqq \mathcal{H}_{n} - i_{\ast}$. 
    It follows from~\eqref{equ:x-i-ast-small} and Lemma~\ref{LEMMA:delete-light-vertex} that 
    \begin{align*}
        \lambda_{\alpha, Q}(\mathcal{H}_{n-1})
        \ge \left(1-\frac{q(\alpha-1)}{\alpha}\left(1-\frac{\delta}{2}\right) \frac{1}{n}\right) \lambda_{\alpha, Q}(\mathcal{H}_{n}),
    \end{align*}
    Moreover, from the inequality $\lambda_{\alpha, Q}(\mathcal{H}) \ge \mu_n$ with Assumptions~\ref{Assump:proof-main-a} and~\ref{Assump:proof-main-c}, we obtain 
    \begin{align*}
        \lambda_{\alpha, Q}(\mathcal{H}_{n-1})
        > \mu_{n-1}. 
    \end{align*}
    Thus, by Assumption~\ref{Assump:proof-main-b}, we have  
    \begin{align}\label{equ:min-Q-degree-Hn-1}
        \delta_{Q}(\mathcal{H}_{n-1})
        \le (1-\varepsilon) q \hat{\pi}  (n-1)^{q-1}. 
    \end{align}
    By relabeling the vertices in $\mathcal{H}$ we may assume that $i_{\ast} = n$, and thus $V(\mathcal{H}_{n-1}) = [n-1]$. 
    Fix an arbitrary nonnegative optimal vector $\mathbf{y} = (y_1, \ldots, y_{n-1}) \in \mathrm{OPT}_{\alpha, Q}(\mathcal{H}_{n-1})$.
    It follows from~\eqref{equ:min-Q-degree-Hn-1} and Lemma~\ref{LEMMA:minimum-degree-weight} that there exists a coordinate $j_{\ast} \in V(\mathcal{H}_{n-1})$ such that 
    \begin{align}\label{equ:y-j-ast-small}
        y_{j_{\ast}} \le (1-\delta)(n-1)^{-1/\alpha}.
    \end{align}
    Define $\mathcal{H}_{n-2} \coloneqq \mathcal{H}_{n-1} - j_{\ast}$. 
    It follows from~\eqref{equ:y-j-ast-small} and Lemma~\ref{LEMMA:delete-light-vertex} that 
    \begin{align*}
        \lambda_{\alpha, Q}(\mathcal{H}_{n-2})
        \ge \left(1-\frac{q(\alpha-1)}{\alpha}\left(1-\frac{\delta}{2}\right) \frac{1}{n-1}\right) \lambda_{\alpha, Q}(\mathcal{H}_{n-1}),
    \end{align*}
    Moreover, from the inequality $\lambda_{\alpha, Q}(\mathcal{H}_{n-1}) > \mu_{n-1}$ with Assumptions~\ref{Assump:proof-main-a} and~\ref{Assump:proof-main-c}, we obtain  
    \begin{align*}
        \lambda_{\alpha, Q}(\mathcal{H}_{n-2})
        > \mu_{n-2}. 
    \end{align*}
    Therefore, we can repeat this process until we reach $\mathcal{H}_{N}$, ensuring that  
    \begin{align}\label{equ:Hk-1-Hk-spectral-a}
        \lambda_{\alpha, Q}(\mathcal{H}_{m-1})
        \ge \left(1-\frac{q(\alpha-1)}{\alpha}\left(1-\frac{\delta}{2}\right) \frac{1}{m}\right) \lambda_{\alpha, Q}(\mathcal{H}_{m})
        \quad\text{for}\quad m \in [N+1, n-1].
    \end{align}
    Using inequalities $\sum_{m=N+1}^{n} \le \ln(n/N) + 1$ and $\sum_{i=1}^{\infty} m^{-2} \le \pi^2/6 < 2$, we obtain  
    \begin{align*}
        \Phi & \coloneqq -\sum_{m=N+1}^{n} \frac{q(\alpha-1)}{\alpha}\left(1-\frac{\delta}{2}\right) \frac{1}{m} - \sum_{m=N+1}^{n} \frac{q^2 (\alpha-1)^2}{\alpha^2}\left(1-\frac{\delta}{2}\right)^2\frac{1}{m^2} \\
        & \ge - \frac{q(\alpha-1)}{\alpha}\left(1-\frac{\delta}{2}\right) \ln\left(\frac{n}{N}\right) - 3 q^2. 
    \end{align*}
    Since $1-x \ge e^{-x-x^2}$ for $x\in [0,1/2]$ (see Fact~\ref{FACT:inequality-c}), it follows that 
    \begin{align*}
        \prod_{m=N+1}^{n} \left(1-\frac{q(\alpha-1)}{\alpha}\left(1-\frac{\delta}{2}\right) \frac{1}{m}\right)
        & \ge \mathrm{exp}\left(\Phi\right) \\
        & \ge \mathrm{exp}\left(- \frac{q(\alpha-1)}{\alpha}\left(1-\frac{\delta}{2}\right) \ln\left(\frac{n}{N}\right) - 2 q^2\right) \\
        & = e^{-3q^2} \left(\frac{n}{N}\right)^{-\frac{q(\alpha-1)}{\alpha} + \frac{q(\alpha-1) \delta}{2 \alpha}}. 
    \end{align*}
    Combining this with~\eqref{equ:Hk-1-Hk-spectral-a}, Fact~\ref{FACT:mu-n-lower-bound} (with $\delta'$ there replaced by $1/2$), and assumption~\eqref{equ:n-lower-bound}, we obtain 
    \begin{align*}
        \lambda_{\alpha, Q}(\mathcal{H}_{N})
        & \ge \lambda_{\alpha, Q}(\mathcal{H}_{n}) \cdot \prod_{m=N+1}^{n} \left(1-\frac{q(\alpha-1)}{\alpha}\left(1-\frac{\delta}{2}\right) \frac{1}{m}\right) \\
        & \ge \mu_{n} \cdot e^{-3q^2} \left(\frac{n}{N}\right)^{-\frac{q(\alpha-1)}{\alpha} + \frac{q(\alpha-1) \delta}{2 \alpha}} \\
        & \ge \hat{\pi} n^{q-\frac{q}{\alpha}} \cdot e^{-3q^2} n^{-\frac{q(\alpha-1)}{\alpha} + \frac{q(\alpha-1) \delta}{2 \alpha}} N^{\frac{q(\alpha-1)}{\alpha}\left(1-\frac{\delta}{2}\right)}/2 \\
        & \ge \hat{\pi} e^{-3q^2} n^{\frac{q(\alpha-1)\delta}{2\alpha} } N^{\frac{q(\alpha-1)}{\alpha}\left(1-\frac{\delta}{2}\right)}/2
        > N^{q}, 
    \end{align*}
    which is clearly impossible. 
    This completes the proof of Theorem~\ref{THM:generalized-spectral}. 
\end{proof}

\section{Proof of Theorem~\ref{THM:C5-K3-spectral}}\label{SEC:Proof-C5-K3}
In this section, we present the proof of Theorem~\ref{THM:C5-K3-spectral}.

Let $m \ge q \ge 1$ be integers. 
For $n \ge 1$, denote by $T_{m,n}^{q}$ the balanced complete $m$-partite $q$-graph on $n$ vertices. 
That is, let $V_1 \cup \cdots \cup V_{m} = [n]$ be a partition such that $|V_1| \ge \cdots \ge |V_{m}| \ge |V_{1}|-1$. Then $T_{m,n}^{q}$ is the $q$-graph consisting of all $q$-subsets of $[n]$ that intersect each part $V_i$ in at most one vertex. 

We will use the following estimate for $\lambda_{\alpha}(T_{m,n}^{q})$, the $\alpha$-spectral radius of $T_{m,n}^{q})$. 
\begin{lemma}\label{LEMMA:spectral-balanced-Turan-r-graph}
    Let $m \ge q \ge 2$ be integers and $\alpha > 1$ be a real number.
    There exists a constant $C_{\ref{LEMMA:spectral-balanced-Turan-r-graph}} = C_{\ref{LEMMA:spectral-balanced-Turan-r-graph}}(m,q,\alpha)$ such that for every $n \in \mathbb{N}$, 
    \begin{align*}
        {q! |T_{m,n}^{q}|}{n^{-\frac{q}{\alpha}}}
        \le \lambda_{\alpha}(T_{m,n}^{q}) 
        \le \left( 1+ {C_{\ref{LEMMA:spectral-balanced-Turan-r-graph}}}{n^{-2}} \right) {q! |T_{m,n}^{q}|}{n^{-\frac{q}{\alpha}}}. 
    \end{align*}
\end{lemma}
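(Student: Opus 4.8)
The plan is to estimate $\lambda_\alpha(T_{m,n}^q)$ by sandwiching it between the "uniform" value and a slightly perturbed value. The lower bound is immediate from Fact~\ref{FACT:lower-bound-spex} (with $Q = K_q^q$, $\mathcal{H} = T_{m,n}^q$, and noting $\mathrm{inj}(K_q^q, \mathcal{H}) = q!\,|\mathcal{H}|$): plugging in the vector $(n^{-1/\alpha},\ldots,n^{-1/\alpha})$ yields $\lambda_\alpha(T_{m,n}^q) \ge q!\,|T_{m,n}^q|\, n^{-q/\alpha}$. So the real work is the upper bound, and that is where the symmetry of the balanced Tur\'an graph should be exploited.

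\textbf{Upper bound via part-wise symmetrization.} First I would observe that on an optimal nonnegative vector $\mathbf{x} \in \mathrm{OPT}_\alpha(T_{m,n}^q)$, the coordinates within a single part $V_i$ can be assumed equal: since $P_{T_{m,n}^q}$ is symmetric under permuting vertices inside a fixed part and is concave along the relevant averaging directions (or simply because replacing the coordinates in $V_i$ by their $\alpha$-power-mean can only increase $P$ while staying on $\Delta_\alpha^{n-1}$ — this is the standard averaging/convexity argument, cf.\ the proof of Fact~\ref{FACT:nonnegative-optimal-vector}), there is an optimal vector that is constant on each part, say equal to $t_i$ on $V_i$ with $\sum_i |V_i| t_i^\alpha = 1$. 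Writing $n_i := |V_i|$, the problem reduces to maximizing the multilinear-type expression $q!\sum_{|S|=q,\, |S\cap V_i|\le 1}\prod_{i\in S}(\cdot)$ over the $m$ variables $t_i$, which after collecting terms becomes a symmetric function of the $n_i t_i$. Since $n_i \in \{\lceil n/m\rceil, \lfloor n/m\rfloor\}$ differ by at most $1$, a further symmetrization (or a direct perturbation estimate) shows the maximum is attained, up to a multiplicative error of size $1 + O(n^{-2})$, at the fully balanced point $t_i = (n_i)^{-1/\alpha}$ scaled appropriately; comparing the value there with the count $q!\,|T_{m,n}^q|\,n^{-q/\alpha}$ gives the claimed bound, with $C_{\ref{LEMMA:spectral-balanced-Turan-r-graph}}$ absorbing the discrepancy coming from the unequal part sizes and from the elementary inequalities $(1+x)^k \le 1 + O(x)$ used along the way.

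\textbf{Making the $n^{-2}$ explicit.} The cleanest route to the $n^{-2}$ rate: let $w_i := n_i t_i^\alpha$, so $\sum w_i = 1$ and $w_i \in [0,1]$. The polynomial value is $q! \sum_{\{i_1<\cdots<i_q\}} \prod_{j=1}^q n_{i_j} t_{i_j} \cdot (\text{number of transversal choices})/(\cdots)$ — more precisely it equals a symmetric polynomial $\Psi(w_1/n_1^{\,?},\ldots)$; after the dust settles, the value is $e_q^\ast(w_1,\ldots,w_m)$-like, maximized over the simplex $\sum w_i = 1$ at $w_i = 1/m$, and the Tur\'an graph's own profile $w_i = n_i/n$ deviates from $1/m$ by $O(1/n)$, so by the vanishing of the gradient at the symmetric point the value changes by only $O(1/n^2)$. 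I would phrase this as: $\lambda_\alpha(T_{m,n}^q) = \max_{\sum w_i = 1} g(w) \cdot (\text{scaling})$ where $g$ is smooth with a critical point at the barycenter, Taylor-expand to second order, and bound the Hessian term by $C n^{-2}$ on the relevant region.

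\textbf{Main obstacle.} The delicate point is justifying the part-wise symmetrization rigorously for $\alpha > 1$ general (not just $\alpha = 2$): one needs that pushing two coordinates in the same part toward their $\ell^\alpha$-mean does not decrease $P_{T_{m,n}^q}$, which requires care because $P$ is multilinear in the vertices but we are re-weighting under the $\ell^\alpha$-constraint, not the $\ell^1$-constraint. I expect to handle this by the change of variables $z_v = x_v^\alpha$ turning $\Delta_\alpha^{n-1}$ into the standard simplex and $P$ into a sum of products of $z_v^{1/\alpha}$, which is concave in each $z_v$ for $\alpha \ge 1$, so Jensen applies within each part; this is the step I would write out most carefully, and the rest is routine estimation feeding into the stated constant.
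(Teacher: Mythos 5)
Your lower bound is the paper's: substitute the uniform vector (Fact~\ref{FACT:lower-bound-spex}). The paper's upper bound, however, is much shorter than what you propose: it applies the Kang--Nikiforov--Yuan inequality $\lambda_\alpha(\mathcal{H}) \le q!\binom{m}{q}^{1/\alpha}m^{-q/\alpha}|\mathcal{H}|^{1-1/\alpha}$ from Theorem~\ref{THM:KNY_spectral-upper-bound-k-partite-graph} (inequality~\eqref{equ:KNY15-b}) to $\mathcal{H} = T_{m,n}^q$, and couples it with a Mean-Value-Theorem estimate $|T_{m,n}^q| \ge (1-O(n^{-2}))\binom{m}{q}(n/m)^q$ (Claim~\ref{CLAIM:Turan-number-concentration}); these two facts finish the argument in one line.

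Your part-wise symmetrization is correct (it is essentially Lemma~\ref{LEMMA:KLM-transposition-extend} in the appendix, not Fact~\ref{FACT:nonnegative-optimal-vector}), and the reduction to maximizing $g(w) \coloneqq q!\sum_{|S|=q}\big(\prod_{i\in S}n_i\big)^{1-1/\alpha}\big(\prod_{i\in S}w_i\big)^{1/\alpha}$ over the simplex $\sum_i w_i = 1$ is sound. The gap is in the Taylor/vanishing-gradient step. When $m \nmid n$ the coefficients $\big(\prod_{i\in S}n_i\big)^{1-1/\alpha}$ are unequal, so $g$ is not symmetric, its maximizer $w^*$ is not at the barycenter, and $\nabla g$ does not vanish at $1/m$ or at the uniform profile $u_i = n_i/n$. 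The first-order cancellation you want does hold for a different reason --- $\nabla g(w^*)$ is parallel to $\mathbf{1}$ by the Lagrange condition and $u - w^*$ is tangent to the simplex, so $g(w^*)-g(u)$ is second order --- but to extract the rate $n^{-2}$ you then need $\lVert w^* - u\rVert = O(1/n)$ together with a uniform Hessian bound, and you give no argument for $\lVert w^* - u\rVert = O(1/n)$. That is a genuine quantitative perturbation claim about the critical-point system of a nearly-symmetric function, and it must be proved, not asserted. A cleaner rescue of your direct route avoids Taylor entirely: a single H\"older step gives $g(w) \le q!\big(\sum_S \prod_{i\in S}n_i\big)^{1-1/\alpha}\big(\sum_S\prod_{i\in S}w_i\big)^{1/\alpha} = q!\,|T_{m,n}^q|^{1-1/\alpha}\big(e_q(w)\big)^{1/\alpha}$, and Maclaurin's inequality $e_q(w)\le\binom{m}{q}m^{-q}$ on the simplex yields exactly the Kang--Nikiforov--Yuan bound; from there only the $|T_{m,n}^q|$ count is needed, precisely as in the paper.
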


In the proof of Lemma~\ref{LEMMA:spectral-balanced-Turan-r-graph} and Theorem~\ref{THM:C5-K3-spectral}, we will use the following theorem. 
\begin{theorem}[{\cite[Theorems~2~and~3]{KNY15}}]\label{THM:KNY_spectral-upper-bound-k-partite-graph}  
    Let $\alpha > 1$ be a real number and $m \ge q \ge 2$ be integers. Suppose that $\mathcal{H}$ is an $m$-partite $q$-graph on $n$ vertices. 
    Then 
    \begin{align}
        \lambda_{\alpha}(\mathcal{H})
        & \le \lambda_{\alpha}(T_{m,n}^{q}), \quad\text{and} \label{equ:KNY15-a} \\[0.3em]
        \lambda_{\alpha}(\mathcal{H})
        & \le q!\binom{m}{q}^{\frac{1}{\alpha}}m^{-\frac{q}{\alpha}}|\mathcal{H}|^{1-\frac{1}{\alpha}}. \label{equ:KNY15-b}
    \end{align}
\end{theorem}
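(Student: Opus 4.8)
This statement is the theorem of Kang--Nikiforov--Yuan quoted from~\cite{KNY15}, and the plan is to follow their approach, handling the two inequalities separately. In both cases I would start by fixing a \emph{nonnegative} optimal vector $\mathbf{x}=(x_1,\ldots,x_n)$ with $\sum_i x_i^{\alpha}=1$, whose existence is justified exactly as in Fact~\ref{FACT:nonnegative-optimal-vector} (applied to the classical Lagrangian $P_{\mathcal{H}}=q!\sum_{e\in\mathcal{H}}x_e$). Write $V_1,\ldots,V_m$ for the parts of $\mathcal{H}$ and set $t_j\coloneqq\sum_{i\in V_j}x_i^{\alpha}$, so that $\sum_{j=1}^{m}t_j=1$.

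For inequality~\eqref{equ:KNY15-b} I would argue directly, via a double application of Hölder's inequality followed by Maclaurin's inequality. Group the edges of $\mathcal{H}$ according to which $q$ of the $m$ parts they meet: for $J\in\binom{[m]}{q}$ let $\mathcal{H}_J$ be the set of edges forming a transversal of $\{V_j:j\in J\}$, so $\sum_J|\mathcal{H}_J|=|\mathcal{H}|$. For fixed $J$, the power-mean form of Fact~\ref{FACT:Holder-Inequality} applied to the $|\mathcal{H}_J|$ monomials gives $\sum_{e\in\mathcal{H}_J}x_e\le|\mathcal{H}_J|^{1-1/\alpha}\big(\sum_{e\in\mathcal{H}_J}x_e^{\alpha}\big)^{1/\alpha}$, and bounding the inner sum by the full product over $\prod_{j\in J}V_j$ yields $\sum_{e\in\mathcal{H}_J}x_e^{\alpha}\le\prod_{j\in J}t_j$. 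Summing over $J$ and applying Hölder once more, with exponents $\tfrac{\alpha}{\alpha-1}$ and $\alpha$, gives
\begin{align*}
    \frac{P_{\mathcal{H}}(\mathbf{x})}{q!}
    &= \sum_{J}\sum_{e\in\mathcal{H}_J}x_e
    \le \Big(\sum_J|\mathcal{H}_J|\Big)^{1-1/\alpha}\Big(\sum_J\prod_{j\in J}t_j\Big)^{1/\alpha} \\
    &= |\mathcal{H}|^{1-1/\alpha}\,e_q(t_1,\ldots,t_m)^{1/\alpha},
\end{align*}
where $e_q$ denotes the $q$-th elementary symmetric polynomial. Finally Maclaurin's inequality gives $e_q(t_1,\ldots,t_m)\le\binom{m}{q}\big(\tfrac1m\sum_j t_j\big)^{q}=\binom{m}{q}m^{-q}$, and~\eqref{equ:KNY15-b} follows after multiplying by $q!$.

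For inequality~\eqref{equ:KNY15-a} the plan has three steps. First, since $\mathcal{H}$ is contained, on the same vertex set, in the complete $m$-partite $q$-graph $K$ with parts $V_1,\ldots,V_m$, and $P_K-P_{\mathcal{H}}\ge0$ on the nonnegative orthant, we have $\lambda_{\alpha}(\mathcal{H})\le\lambda_{\alpha}(K)$, so we may assume $\mathcal{H}=K$. Second, for a complete $m$-partite $q$-graph $P_K$ is affine in the block of variables lying in each part (no edge meets a part twice), so among nonnegative vectors with prescribed masses $t_1,\ldots,t_m$ the value of $P_K$ is maximized when $\mathbf{x}$ is constant on each $V_j$; the power-mean inequality within $V_j$ then lets us substitute $x_i=(t_j/|V_j|)^{1/\alpha}$, reducing to
\begin{align*}
    \lambda_{\alpha}(K)
    = q!\,\max_{\substack{t_j\ge0\\ \sum_j t_j=1}} e_q\big(|V_1|^{1-1/\alpha}t_1^{1/\alpha},\ldots,|V_m|^{1-1/\alpha}t_m^{1/\alpha}\big),
\end{align*}
so that $\lambda_{\alpha}(K)$ is a symmetric function $f$ of the part sizes alone. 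Third --- and this is the technical heart --- one must show that over integer part sizes summing to $n$ this function $f$ is maximized at the balanced choice, i.e.\ at $T_{m,n}^{q}$. I would prove this by a smoothing/exchange argument: if two parts have sizes differing by at least $2$, move one vertex from the larger part to the smaller and show $\lambda_{\alpha}$ does not decrease, using an optimal weight vector, the first-order optimality conditions of the inner maximization over $(t_j)$, and the identity $e_k(a,\mathbf{z})-e_k(b,\mathbf{z})=(a-b)\,e_{k-1}(\mathbf{z})$; alternatively, one can try to show that $f$ extends to a Schur-concave function on the positive orthant and invoke majorization, since the balanced integer partition of $n$ into $m$ parts is majorized by every other. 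The main obstacle is precisely this balancing step: the relevant monomials $\xi_i^{1-1/\alpha}\xi_j^{1-1/\alpha}$ pass from concave to non-concave as $\alpha$ crosses $2$, so neither the naive exchange with a fixed weight vector nor a one-line convexity argument works uniformly in $\alpha>1$, and one must combine the exchange with a re-optimization of the weights (or treat the ranges $1<\alpha\le2$ and $\alpha>2$ separately), which is the delicate point carried out in~\cite{KNY15}.
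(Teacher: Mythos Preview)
The paper does not prove this theorem: it is quoted verbatim from~\cite{KNY15} and used as a black box in the proofs of Lemma~\ref{LEMMA:spectral-balanced-Turan-r-graph} and Claim~\ref{CLAIM:C5K3-spectral-balanced}. There is therefore no ``paper's own proof'' to compare against.

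That said, your sketch is a faithful and essentially correct reconstruction of the argument. For~\eqref{equ:KNY15-b} the two-step H\"older bound followed by Maclaurin's inequality is exactly right and yields the stated constant. For~\eqref{equ:KNY15-a}, your reduction to the complete $m$-partite $q$-graph $K$ is immediate, and the key structural observation---that $P_K=q!\,e_q(s_1,\ldots,s_m)$ with $s_j=\sum_{i\in V_j}x_i$, so that under the constraint $\sum_{i\in V_j}x_i^{\alpha}=t_j$ each $s_j$ is maximized by taking $\mathbf{x}$ constant on $V_j$---is correct and gives the displayed formula for $\lambda_{\alpha}(K)$ as a function of the part sizes. (Your phrase ``affine in the block'' is slightly misleading: the point is not affinity per se but that $P_K$ depends on the block only through the linear functional $s_j$, and the coefficient $e_{q-1}$ in front of $s_j$ is nonnegative, so one wants $s_j$ as large as possible.) You are also right that the remaining balancing step---showing that the resulting function of $(|V_1|,\ldots,|V_m|)$ is maximized at the balanced partition---is the only genuinely delicate part, and that a naive single-variable concavity argument does not work uniformly in $\alpha>1$; this is precisely what~\cite{KNY15} handles. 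Since the present paper simply cites the result, your level of detail already exceeds what the paper provides.
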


\begin{proof}[Proof of Lemma~\ref{LEMMA:spectral-balanced-Turan-r-graph}]
    Fix a real number $\alpha > 1$ and integers $m \ge q \ge 2$. 
    By Fact~\ref{FACT:lower-bound-spex}, it suffices to prove the upper bound in Lemma~\ref{LEMMA:spectral-balanced-Turan-r-graph}. 
    First, we prove the following estimate for $|T_{m,n}^{q}|$.
    \begin{claim}\label{CLAIM:Turan-number-concentration}
        There exists a constant $C_{\ref{CLAIM:Turan-number-concentration}} = C_{\ref{CLAIM:Turan-number-concentration}}(m,q)$ such that for every $n \in \mathbb{N}$, 
        \begin{align*}
            \left( 1- C_{\ref{CLAIM:Turan-number-concentration}} n^{-2} \right)\binom{m}{q}\left(\frac{n}{m}\right)^q 
            \le |T_{m,n}^{q}| 
            \le \binom{m}{q}\left(\frac{n}{m}\right)^q. 
        \end{align*}
    \end{claim}
    \begin{proof}[Proof of Claim~\ref{CLAIM:Turan-number-concentration}]
        The upper bound is well-known, so it suffices to prove that $|T_{m,n}^{q}| \ge \binom{m}{q}\left(\frac{n}{m}\right)^q - C_{\ref{CLAIM:Turan-number-concentration}} n^{q-2}$ for some constant $C_{\ref{CLAIM:Turan-number-concentration}}$ depending only on $m$ and $q$. 
        
        Fix $n \ge 1$. Let $(s,t) \in \mathbb{N}^2$ be integers such that $n = ms + t$ and $0 \le t \le m-1$. 
        Let 
        \begin{align*}
            f(x)
            \coloneqq \sum_{i=0}^{q} \binom{t}{i}\binom{m-t}{q-i} \left(Y + x\right)^{i} \left(Y - \frac{t x}{m-t} \right)^{q-i}, 
            \quad\text{where}\quad 
            Y \coloneqq \frac{n}{m} = \frac{ms+t}{m}. 
        \end{align*}
        Straightforward calculations show that
        \begin{align*}
            \frac{\mathrm{d} f(x)}{\mathrm{d} x}
            & = \sum_{i=1}^{q} \binom{t}{i}\binom{m-t}{q-i} i \left(Y + x\right)^{i-1} \left(Y -\frac{t x}{m-t} \right)^{q-i} \\
            & \quad + \sum_{i=0}^{q-1} \binom{t}{i}\binom{m-t}{q-i} \frac{-(q-i)t}{m-t} \left(Y + x\right)^{i} \left(Y -\frac{t x}{m-t} \right)^{q-i-1}.
        \end{align*}
        Since (as elementary calculations show) the coefficient of the $Y^{q-1}$ term in $\frac{\mathrm{d} f(x)}{\mathrm{d} x}$ is 
        \begin{align*}
            \sum_{i=1}^{q} \binom{t}{i}\binom{m-t}{q-i} i 
            - \frac{t}{m-t} \sum_{i=0}^{q-1} \binom{t}{i}\binom{m-t}{q-i} \frac{(q-i)t}{m-t}
            = 0,  
        \end{align*}
        it follows that  
        \begin{align}\label{equ:derivative-upper-bound}
            \frac{\mathrm{d} f(x)}{\mathrm{d} x}
            = \sum_{i=0}^{q-2} \alpha_i Y^{i}, 
        \end{align}
        where each $\alpha_i$ is a constant depending only on $m, q, x, i$. 
    
        Observe that
        \begin{align*}
            f(0) = \binom{m}{q}\left(\frac{n}{m}\right)^{q}
            \quad\text{and}\quad 
            f\left(\frac{m-t}{m}\right)
            = \sum_{i=0}^{q} \binom{t}{i}\binom{m-t}{q-i} \left(s+1\right)^{i} s^{q-i}
            = |T_{m,n}^{q}|. 
        \end{align*}
        By the Mean Value Theorem, there exists $y \in \left[0, \frac{m-t}{m}\right]$ such that 
        \begin{align*}
            \left| |T_{m,n}^{q}| - \binom{m}{q}\left(\frac{n}{m}\right)^{q} \right|
             = \left| f\left(\frac{m-t}{m}\right) - f(0) \right| 
             = \left| \frac{\mathrm{d} f(y)}{\mathrm{d} y} \frac{m-t}{m} \right|
            \le C Y^{q-2}, 
        \end{align*}
        where the last inequality follows from~\eqref{equ:derivative-upper-bound}, and $C$ is a constant depending only on $m$ and $q$. 
        This proves Claim~\ref{CLAIM:Turan-number-concentration}. 
    \end{proof}

    By Claim~\ref{CLAIM:Turan-number-concentration} and the assumption that $n$ is sufficiently large, we obtain 
    \begin{align*}
        |T_{m,n}^{q}|^{-\frac{1}{\alpha}}
        \le \left( 1- C_{\ref{CLAIM:Turan-number-concentration}} n^{-2} \right)^{-\frac{1}{\alpha}} \binom{m}{q}^{-\frac{1}{\alpha}} \left(\frac{n}{m}\right)^{-\frac{q}{\alpha}}  
        \le \left( 1 + \frac{2C_{\ref{CLAIM:Turan-number-concentration}}}{\alpha} n^{-2} \right) \binom{m}{q}^{-\frac{1}{\alpha}} \left(\frac{n}{m}\right)^{-\frac{q}{\alpha}},  
    \end{align*}
    Combining it with~\eqref{equ:KNY15-b}, we obtain 
    \begin{align*}
        \lambda_{\alpha}(T_{m,n}^{q}) 
        & \le q!\binom{m}{q}^{\frac{1}{\alpha}}m^{-\frac{q}{\alpha}}|T_{m,n}^{q}| \left( 1 + \frac{2C_{\ref{CLAIM:Turan-number-concentration}}}{\alpha} n^{-2} \right) \binom{m}{q}^{-\frac{1}{\alpha}} \left(\frac{n}{m}\right)^{-\frac{q}{\alpha}} \\
        & \le \left( 1 + \frac{2C_{\ref{CLAIM:Turan-number-concentration}}}{\alpha} n^{-2} \right) q! |T_{m,n}^{q}| n^{-\frac{q}{\alpha}}. 
    \end{align*}
    This completes the proof of Lemma~\ref{LEMMA:spectral-balanced-Turan-r-graph}. 
\end{proof}

We are now ready to present the proof of Theorem~\ref{THM:C5-K3-spectral}. 
\begin{proof}[Proof of Theorem~\ref{THM:C5-K3-spectral}]
    Recall that $\mathfrak{C}_{5}$ denotes the collection of all $C_5$-colorable graphs. 
    This family is clearly hereditary, and every graph in $\mathfrak{C}_{5}$ is $K_{3}$-free.
    It was shown in~{\cite[Theorem~2.3]{CL24}} that there exist $\varepsilon > 0$ and $N'$ such that $K_{3}$ is $(\varepsilon, N', C_5)$-degree stable with respect to $\mathfrak{C}_{5}$. 
    Let $\delta > 0$, $M$, and $N$ be the constants given by Theorem~\ref{THM:generalized-spectral}. 
    By enlarging $M$ and $N$ if necessary, we may assume that they are sufficiently large, and in particular that $\min\{M,~N\} \ge N'$. 
    
    By Theorem~\ref{THM:generalized-spectral}, it remains to show that $\mathcal{F}$ is $(\delta, M, Q)$-smooth and that $\mathfrak{C}_{5}$ is $(\alpha, \delta, M, Q, \mathcal{F})$-balanced in spectral. 
    Note that the former property follows easily from Theorem~\ref{THM:Pentagon-exact}, so we focus on verifying the latter.

    \begin{claim}\label{CLAIM:C5K3-spectral-balanced}
        The family $\mathfrak{C}_{5}$ is $(\alpha, \delta, M, Q, \mathcal{F})$-balanced in spectral. 
    \end{claim}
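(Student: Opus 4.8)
The plan is to verify the inequality defining ``$(\alpha,\delta,M,Q,\mathcal F)$-balanced in spectral'' (Definition~\ref{DEF:generalized-spectral}) for $Q=C_5$, $\mathcal F=\{K_3\}$, $q=5$, i.e.\ that $\bigl|\lambda_{\alpha,C_5}(n,\mathfrak C_5)-\mathrm{inj}(n,C_5,K_3)\,n^{-5/\alpha}\bigr|\le \delta n^{4-5/\alpha}$ for all large $n$. The lower bound $\lambda_{\alpha,C_5}(n,\mathfrak C_5)\ge \mathrm{inj}(n,C_5,K_3)\,n^{-5/\alpha}$ needs no error term and is immediate: by Theorem~\ref{THM:Pentagon-exact} (together with $\mathrm{inj}(Q,\cdot)=|\mathrm{Aut}(Q)|\cdot\mathrm N(Q,\cdot)$ from~\eqref{equ:def-number-Q}) there is an $n$-vertex graph $H\in\mathfrak C_5$ with $\mathrm{inj}(C_5,H)=\mathrm{inj}(n,C_5,K_3)$, and substituting the uniform vector $(n^{-1/\alpha},\dots,n^{-1/\alpha})$ into $P_{C_5,H}$ (cf.\ Fact~\ref{FACT:lower-bound-spex}) gives $\lambda_{\alpha,C_5}(H)\ge \mathrm{inj}(C_5,H)\,n^{-5/\alpha}$. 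So the real work is the matching upper bound up to the additive term $\delta n^{4-5/\alpha}$.

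First I would reduce to complete blow-ups of $C_5$. Since $P_{C_5,\mathcal H'}(x)\le P_{C_5,\mathcal H}(x)$ for all nonnegative $x$ whenever $\mathcal H'\subseteq\mathcal H$, Fact~\ref{FACT:nonnegative-optimal-vector} shows $\lambda_{\alpha,C_5}$ is monotone under subgraphs; and every $n$-vertex member of $\mathfrak C_5$ embeds, via its homomorphism to $C_5$, into some complete blow-up $\blow{C_5}{V_1,\dots,V_5}$ with $|V_1|+\cdots+|V_5|=n$ (empty parts allowed). Hence $\lambda_{\alpha,C_5}(n,\mathfrak C_5)=\max\lambda_{\alpha,C_5}(\blow{C_5}{V_1,\dots,V_5})$ over all such partitions. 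Next I would get a closed form for the blow-up: since $C_5$ is a core, any copy of $C_5$ inside $\blow{C_5}{V_1,\dots,V_5}$ meets each $V_i$ in exactly one vertex (otherwise the projection onto $C_5$ would be a non-injective endomorphism of $C_5$), and conversely every transversal of $(V_1,\dots,V_5)$ spans a $C_5$; so, writing $y_i:=\sum_{v\in V_i}x_v$, we get $P_{C_5,\blow{C_5}{V_1,\dots,V_5}}(x)=10\prod_{i=1}^5 y_i$. Maximizing over nonnegative $x\in\Delta_\alpha^{n-1}$: H\"older's inequality (Fact~\ref{FACT:Holder-Inequality}) gives $y_i\le n_i^{1-1/\alpha}\bigl(\sum_{v\in V_i}x_v^\alpha\bigr)^{1/\alpha}$ with $n_i:=|V_i|$, and then AM--GM applied to the masses $t_i:=\sum_{v\in V_i}x_v^\alpha$ (with $\sum_i t_i=1$) yields $\prod_i y_i\le 5^{-5/\alpha}\prod_i n_i^{1-1/\alpha}$, with both equalities attained for $x_v\equiv(5n_i)^{-1/\alpha}$ on $V_i$; therefore $\lambda_{\alpha,C_5}(\blow{C_5}{V_1,\dots,V_5})=10\cdot 5^{-5/\alpha}\bigl(\prod_{i=1}^5 n_i\bigr)^{1-1/\alpha}$.

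It then remains to optimize over the partition and control the error. Since $\alpha>1$ the exponent $1-1/\alpha$ is positive, so the maximum over $\sum n_i=n$ is attained at the balanced partition $\vec n^{\ast}$, whose product $P^{\ast}:=\prod_i n_i^{\ast}$ is also the maximum of $\prod n_i$ subject to $\sum n_i=n$; hence $\mathrm{inj}(n,C_5,K_3)=10P^{\ast}$ by Theorem~\ref{THM:Pentagon-exact} and $\lambda_{\alpha,C_5}(n,\mathfrak C_5)=10\cdot 5^{-5/\alpha}(P^{\ast})^{1-1/\alpha}$. A routine estimate gives $P^{\ast}=(n/5)^5\bigl(1+O(n^{-2})\bigr)$, hence $5^{-5/\alpha}(P^{\ast})^{-1/\alpha}=n^{-5/\alpha}\bigl(1+O(n^{-2})\bigr)$, so that $\lambda_{\alpha,C_5}(n,\mathfrak C_5)=10P^{\ast}n^{-5/\alpha}+O\bigl(P^{\ast}n^{-5/\alpha-2}\bigr)=\mathrm{inj}(n,C_5,K_3)\,n^{-5/\alpha}+O\bigl(n^{3-5/\alpha}\bigr)$; since $3<4$, this is at most $\mathrm{inj}(n,C_5,K_3)\,n^{-5/\alpha}+\delta n^{4-5/\alpha}$ once $M$ (hence $n$) is large enough in terms of $\delta$ and $\alpha$. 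Combined with the lower bound this proves the claim. I expect the main obstacle to be essentially bookkeeping: checking that the H\"older and AM--GM equality cases are simultaneously attainable, and carrying out the final asymptotic estimate with explicit constants so that ``$M$ large enough'' is justified; the combinatorial input about copies of $C_5$ in a blow-up is standard, coming from the coreness of $C_5$.
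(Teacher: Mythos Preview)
Your proof is correct, but it follows a different route from the paper's. The paper converts the $C_5$-Lagrangian polynomial into an ordinary Lagrangian polynomial of a $5$-uniform $5$-partite hypergraph: for $G\in\mathfrak C_5$ it sets $\mathcal H_G\coloneqq\{S\in\binom{V(G)}{5}:C_5\subseteq G[S]\}$, observes that $P_{C_5,G}=\frac{|\mathrm{Aut}(C_5)|}{5!}P_{\mathcal H_G}$, and then invokes the black-box result of Kang--Nikiforov--Yuan (Theorem~\ref{THM:KNY_spectral-upper-bound-k-partite-graph}) together with Lemma~\ref{LEMMA:spectral-balanced-Turan-r-graph} to bound $\lambda_\alpha(\mathcal H_G)\le\lambda_\alpha(T_{5,n}^5)\le(1+C n^{-2})\,5!\,|T_{5,n}^5|\,n^{-5/\alpha}$. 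You instead stay inside $\mathfrak C_5$, reduce to complete blow-ups by monotonicity, use the coreness of $C_5$ to get the closed form $P_{C_5,\blow{C_5}{V_1,\dots,V_5}}(x)=10\prod_i y_i$, and optimize by H\"older plus AM--GM to obtain the exact value $\lambda_{\alpha,C_5}(\blow{C_5}{V_1,\dots,V_5})=10\cdot5^{-5/\alpha}\bigl(\prod_i n_i\bigr)^{1-1/\alpha}$. Your argument is fully self-contained and yields the same $O(n^{-2})$ error term (your estimate $P^\ast=(n/5)^5(1+O(n^{-2}))$ is exactly Claim~\ref{CLAIM:Turan-number-concentration} for $m=q=5$); the paper's argument is shorter on the page because it outsources the analytic optimization to~\cite{KNY15}, and in exchange it applies verbatim whenever the relevant hypergraph happens to be $m$-partite. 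Both reductions ultimately rely on the same combinatorial fact that every $C_5$ in a $C_5$-blow-up is a transversal.
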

    \begin{proof}[Proof of Claim~\ref{CLAIM:C5K3-spectral-balanced}]
        Fix $n \ge M$. 
        By Fact~\ref{FACT:lower-bound-spex}, it suffices to show that 
        \begin{align}\label{equ:spectral-C5-upper-bound}
            \lambda_{\alpha, C_5}(n, \mathfrak{C}_5)
            \le \frac{\mathrm{inj}(n,C_5, K_3)}{n^{5/\alpha}}
            + \delta n^{4-5/\alpha}
            = \frac{|\mathrm{Aut}(C_5)| |T_{5,n}^{5}|}{n^{5/\alpha}} + \delta n^{4-5/\alpha},  
        \end{align}
        where the equality follows from Theorem~\ref{THM:Pentagon-exact}. 
        
        Let $\mathfrak{T}_{5}$ be the collection of all $5$-partite $5$-graphs. 
        For every $G \in \mathfrak{C}_{5}$, we associate a $5$-graph $\mathcal{H}_{G}$ defined by
        \begin{align*}
            \mathcal{H}_{G}
            \coloneqq \left\{ S\in \binom{V(G)}{5} \colon C_5 \subseteq G[S] \right\}. 
        \end{align*}
        Observe that $\mathcal{H}_{G} \in \mathfrak{T}_{5}$ and 
        \begin{align*}
            P_{C_5, G}(X_1, \ldots, X_n)
            = \frac{|\mathrm{Aut}(C_5)|}{5!} P_{\mathcal{H}_{G}}(X_1, \ldots, X_n). 
        \end{align*}
        So it follows from Lemma~\ref{LEMMA:spectral-balanced-Turan-r-graph} and~\eqref{equ:KNY15-a} that 
        \begin{align*}
            \lambda_{\alpha, C_5}(n, \mathfrak{C}_5)
            \le \frac{|\mathrm{Aut}(C_5)|}{5!} \lambda_{\alpha}(n, \mathfrak{T}_5)
            & = \frac{|\mathrm{Aut}(C_5)|}{5!} \lambda_{\alpha}(T_{5,n}^{5}) \\
            & \le \frac{|\mathrm{Aut}(C_5)|}{5!} \left(1+C_{\ref{LEMMA:spectral-balanced-Turan-r-graph}} n^{-2}\right) 5! |T_{5,n}^{5}| n^{-\frac{5}{\alpha}} \\
            & \le \frac{|\mathrm{Aut}(C_5)| |T_{5,n}^{5}|}{n^{5/\alpha}} + |\mathrm{Aut}(C_5)| \left(\frac{n}{5}\right)^{5} n^{-2-\frac{5}{\alpha}}, 
        \end{align*}
        which implies~\eqref{equ:spectral-C5-upper-bound}, since $n$ is large. 
    \end{proof}

    Claim~\ref{CLAIM:C5K3-spectral-balanced} completes the proof of Theorem~\ref{THM:C5-K3-spectral}. 
\end{proof}

\section{An entropy perspective}\label{SEC:Entropy}
%
%
Let $X$ be a discrete random variable taking values in a finite set $\Omega$. For convenience, let 
\begin{align*}
    p_{X}(x) \coloneqq \mathbb{P}(X = x)
    \quad\text{for every}\quad x \in \Omega. 
\end{align*}
The support of $X$ is defined as
\begin{align*}
    \mathrm{Supp}(X)
    \coloneqq \left\{x\in \Omega \colon p_{X}(x) > 0\right\}. 
\end{align*}
For notational convenience, we set $0 \cdot \log_{2}0 = 0$. 
Recall that the well-known \textbf{Shannon entropy}~\cite{Sha48} of $X$ is given by  
\begin{align*}
    \mathbb{H}[X]
    \coloneqq - \sum_{x \in \mathrm{Supp}(X)} p_{X}(x) \cdot \log p_{X}(x)
    = - \sum_{x \in \Omega} p_{X}(x) \cdot \log p_{X}(x). 
\end{align*}
Given random variables $X_1, \ldots, X_q$ defined on a common sample space $\Omega$, their \textbf{mixture}, denoted by $\mu(X_1, \ldots, X_q)$, is the probability distribution defined by
\begin{align*}
    \mathbb{P}\left( \mu(X_1, \ldots, X_q) = v \right)
    = \sum_{i\in [q]} \frac{\mathbb{P}\left(X_i = v\right)}{q}
    \quad\text{for every}\quad v\in \Omega. 
\end{align*}

Let $Q$ be an $r$-graph on the vertex set $[q]$. 
Inspired by the definition in~\cite{CY24}, we say a $q$-tuple of random variables $(X_1, \ldots, X_{q})$ is \textbf{a random embedding of $Q$ in $\mathcal{H}$} if the following conditions are satisfied:
\begin{enumerate}[label=(\roman*)]
    \item\label{prop:random-embedding-a} $(X_1, \ldots, X_{r})$ is always an element in $\mathrm{Inj}(Q,\mathcal{H})$, and 
    \item\label{prop:random-embedding-b} $(X_1, \ldots, X_{r})$ is symmetric with respect to $\mathrm{Aut}(Q)$, that is, $(X_1, \ldots, X_{q})$ is the same as $(X_{\sigma(1)}, \ldots, X_{\sigma(q)})$ for every $\sigma \in \mathrm{Aut}(Q)$. 
\end{enumerate}
We define the \textbf{$(\alpha,Q)$-entropic density} of $\mathcal{H}$ as 
\begin{align*}
    \eta_{\alpha,Q}(\mathcal{H})
    \coloneqq \max_{(X_1, \ldots, X_{q})} 2^{\mathbb{H}(X_1, \ldots, X_{q}) - \frac{q}{\alpha} \mathbb{H}\left( \mu(X_1, \ldots, X_q) \right)}, 
\end{align*}
where the maximum is taken over all random embeddings of $Q$ in $\mathcal{H}$.
In the case where $Q = K_{r}^{r}$ is a single edge, we abbreviate $\eta_{\alpha}(\mathcal{H})$ as $\eta_{\alpha,Q}(\mathcal{H})$.

It was shown in~{\cite[Proposition~5.4]{CY24}} that for every $r$-graph $\mathcal{H}$ and every $\alpha \ge 1$, 
\begin{align*}
    \eta_{\alpha}(\mathcal{H}) = \lambda_{\alpha}(\mathcal{H}).
\end{align*}
We extend their result to arbitrary $Q$. 
\begin{proposition}\label{PROP:Lagrangian-Entropy}
    Let $Q$ and $\mathcal{H}$ be $r$-graphs. 
    For every $\alpha \ge 1$, 
    \begin{align*}
        \eta_{\alpha, Q}(\mathcal{H}) = \lambda_{\alpha, Q}(\mathcal{H}).
    \end{align*}
\end{proposition}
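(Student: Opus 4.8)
\textbf{Proof plan for Proposition~\ref{PROP:Lagrangian-Entropy}.}
The plan is to prove the two inequalities $\eta_{\alpha,Q}(\mathcal{H}) \le \lambda_{\alpha,Q}(\mathcal{H})$ and $\eta_{\alpha,Q}(\mathcal{H}) \ge \lambda_{\alpha,Q}(\mathcal{H})$ separately, following the strategy of~\cite[Proposition~5.4]{CY24} but bookkeeping the $\mathrm{Aut}(Q)$-symmetry carefully. Throughout, fix a nonnegative optimal vector $\mathbf{x} = (x_1,\ldots,x_n) \in \mathrm{OPT}_{\alpha,Q}(\mathcal{H})$ (Fact~\ref{FACT:nonnegative-optimal-vector}), so that $P_{Q,\mathcal{H}}(\mathbf{x}) = \lambda_{\alpha,Q}(\mathcal{H})$ and $\sum_i x_i^{\alpha} = 1$.

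\emph{Lower bound $\eta_{\alpha,Q}(\mathcal{H}) \ge \lambda_{\alpha,Q}(\mathcal{H})$.} First I would exhibit a good random embedding built from $\mathbf{x}$. Sample an unordered copy $\{v_1,\ldots,v_q\} = \phi(V(Q))$ of $Q$ in $\mathcal{H}$ with probability proportional to $\prod_{j} x_{v_j}$ (the normalizing constant being $P_{Q,\mathcal{H}}(\mathbf{x}) = \lambda_{\alpha,Q}(\mathcal{H})$, up to the $\mathrm{Aut}(Q)$ factor from passing between ordered and unordered copies), and then pick the labelling $\phi$ uniformly among injective homomorphisms with that image; averaging over the labelling makes $(X_1,\ldots,X_q)$ automatically $\mathrm{Aut}(Q)$-symmetric, giving property~\ref{prop:random-embedding-b}, while~\ref{prop:random-embedding-a} holds by construction. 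Now compute: $\mathbb{H}(X_1,\ldots,X_q)$ splits as the entropy of the image plus the (conditional) entropy of the labelling; and each marginal $X_i$, hence the mixture $\mu(X_1,\ldots,X_q)$, turns out to be exactly the distribution $p(v) = x_v^{\alpha}$ on $V(\mathcal{H})$ — this is where Lemma~\ref{LEMMA:Lagrange-multiplier}, $\partial_i P_{Q,\mathcal{H}}(\mathbf{x}) = q\,\lambda_{\alpha,Q}(\mathcal{H})\,x_i^{\alpha-1}$, is used, since the probability that vertex $i$ appears in the sampled copy is $x_i \cdot \partial_i P_{Q,\mathcal{H}}(\mathbf{x}) / (q\,\lambda_{\alpha,Q}(\mathcal{H})) = x_i^{\alpha}$. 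A direct substitution then shows $2^{\mathbb{H}(X_1,\ldots,X_q) - \frac{q}{\alpha}\mathbb{H}(\mu(X_1,\ldots,X_q))} = \lambda_{\alpha,Q}(\mathcal{H})$: the term $-\frac{q}{\alpha}\mathbb{H}(\mu) = \frac{q}{\alpha}\sum_v x_v^{\alpha}\log_2 x_v^{\alpha} = q\sum_v x_v^{\alpha}\log_2 x_v$ recombines with $\mathbb{H}(X_1,\ldots,X_q) = -\sum_{\phi} \frac{x_{\phi(1)}\cdots x_{\phi(q)}}{|\mathrm{Aut}(Q)|\,\lambda_{\alpha,Q}(\mathcal{H})}\log_2\!\big(\frac{x_{\phi(1)}\cdots x_{\phi(q)}}{|\mathrm{Aut}(Q)|\,\lambda_{\alpha,Q}(\mathcal{H})}\big)$ to leave exactly $\log_2 \lambda_{\alpha,Q}(\mathcal{H})$.

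\emph{Upper bound $\eta_{\alpha,Q}(\mathcal{H}) \le \lambda_{\alpha,Q}(\mathcal{H})$.} Conversely, given an arbitrary random embedding $(X_1,\ldots,X_q)$ of $Q$ in $\mathcal{H}$, write $p(\phi) = \mathbb{P}((X_1,\ldots,X_q) = \phi)$ over ordered copies $\phi$, and let $q(v) = \mathbb{P}(\mu(X_1,\ldots,X_q) = v) = \frac1q\sum_{i} \mathbb{P}(X_i = v)$ be the mixture distribution. The quantity $\mathbb{H}(X_1,\ldots,X_q) - \frac{q}{\alpha}\mathbb{H}(\mu)$ equals $-\sum_\phi p(\phi)\log_2 p(\phi) + \frac{q}{\alpha}\sum_v q(v)\log_2 q(v)$, and I want to bound $2$ to this power by $\sum_\phi \prod_{j\in\phi} q(v_j)^{1/\alpha} \le \sum_{\phi\in\mathrm{Inj}(Q,\mathcal{H})}\prod q(v_j)^{1/\alpha}$, which after noting $\sum_v q(v) = 1$ (so $(q(v)^{1/\alpha})_v$ rescaled lies in... in fact already satisfies $\sum q(v) = 1$, and we want a point of $\Delta_\alpha^{n-1}$) gives at most $\lambda_{\alpha,Q}(\mathcal{H})$. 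The key estimate is a weighted AM–GM / concavity-of-log step: since $\prod_{j} \mathbb{P}(X_j = v_j) \le \prod_j q(v_j) \cdot$ (symmetry correction), or more cleanly via the entropy inequality $\mathbb{H}(X_1,\ldots,X_q) \le \sum_j \mathbb{H}(X_j) \le q\,\mathbb{H}(\mu)$ being too lossy — instead one uses that for each fixed $\phi$ in the support, $\log_2\frac{1}{p(\phi)} \le \frac1\alpha\sum_{j} \log_2\frac{1}{q(v_j)} + \log_2\frac{1}{p(\phi)}\big(1 - \tfrac{?}{}\big)$; concretely, apply the log-sum / Gibbs inequality comparing $p(\phi)$ against the subprobability $r(\phi) := \frac{1}{\lambda_{\alpha,Q}(\mathcal{H})}\prod_{j\in\phi} q(v_j)^{1/\alpha} \cdot (\text{const})$, using $\sum_\phi \prod_{j\in\phi} q(v_j)^{1/\alpha} \le \lambda_{\alpha,Q}(\mathcal{H})$ because $(q(v)^{1/\alpha})_{v}$... wait, one needs $\sum_v (q(v)^{1/\alpha})^{\alpha} = \sum_v q(v) = 1$, so $(q(v)^{1/\alpha})_v \in \Delta_\alpha^{n-1}$ exactly, hence $\sum_{\phi\in\mathrm{Inj}(Q,\mathcal{H})}\prod_{j\in\phi} q(v_j)^{1/\alpha} = P_{Q,\mathcal{H}}\big((q(v)^{1/\alpha})_v\big) \le \lambda_{\alpha,Q}(\mathcal{H})$. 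Then $\mathbb{H}(X_1,\ldots,X_q) - \frac{q}{\alpha}\mathbb{H}(\mu) = \mathbb{E}_{\phi}\big[\log_2\frac{1}{p(\phi)} - \frac1\alpha\sum_{j}\log_2\frac{1}{q(X_j)}\big] \le \mathbb{E}_\phi\big[\log_2\frac{r(\phi)}{p(\phi)}\big] + \log_2\lambda_{\alpha,Q}(\mathcal{H}) \le \log_2\lambda_{\alpha,Q}(\mathcal{H})$ by Jensen/Gibbs (using $\sum_\phi r(\phi) \le 1$ after the normalization, where I absorbed $\lambda_{\alpha,Q}(\mathcal{H})$), and exponentiating gives the bound.

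\emph{Main obstacle.} The delicate point is the correct handling of the $\mathrm{Aut}(Q)$-symmetry and the ordered-vs-unordered counting: the mixture $\mu$ is defined coordinatewise, and one must check that for a general symmetric random embedding the "per-coordinate'' marginal $\mathbb{P}(X_j = v)$ does not depend on $j$ (which follows from \ref{prop:random-embedding-b}), so that $\mu(X_1,\ldots,X_q) = X_1$ in distribution; and in the upper bound, identifying the right subprobability $r(\phi)$ so that the Gibbs-inequality slack is nonnegative requires precisely the observation $\sum_v q(v) = 1$, i.e. that the mixture is a genuine probability distribution whose $\alpha$-th power of its $(1/\alpha)$-th powers sums to $1$. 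Once these are in place, both directions are short computations; the construction in the lower bound (sampling proportional to the weight product and then randomizing the labelling) is exactly the equality case and makes the proposition tight.
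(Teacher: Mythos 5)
Your proof is correct, and the lower bound uses the same construction as the paper: sample $\phi\in\mathrm{Inj}(Q,\mathcal{H})$ with probability $\prod_{j\in\phi}x_j/\lambda_{\alpha,Q}(\mathcal{H})$, where $\mathbf{x}$ is a nonnegative optimal vector. (One minor slip: the stray $|\mathrm{Aut}(Q)|$ factor in your normalizer should be dropped — with $\beta \coloneqq \sum_{\phi\in\mathrm{Inj}(Q,\mathcal{H})}\prod_j x_{\phi(j)} = \lambda_{\alpha,Q}(\mathcal{H})$, the probability is $p(\phi) = \prod_j x_{\phi(j)}/\beta$, which already sums to $1$ over ordered embeddings and is automatically $\mathrm{Aut}(Q)$-symmetric because $\prod_j x_{\phi(j)}$ depends only on $\phi(V(Q))$.) You invoke Lemma~\ref{LEMMA:Lagrange-multiplier} to get $\mathbb{P}(\mu = v) = x_v^{\alpha}$ and hence exact equality $\log_2\lambda_{\alpha,Q}(\mathcal{H})$; the paper reaches the same lower bound slightly differently, by establishing an identity (Claim~\ref{CLAIM:entropy-difference-simplify}) for every nonnegative vector and then using concavity of $\log_2$ to bound the error term by $0$, without needing the optimality condition from Lemma~\ref{LEMMA:Lagrange-multiplier}. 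Both routes are fine.

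Your upper bound, however, takes a genuinely different and cleaner path than the paper's. Given an arbitrary random embedding, you set $q(v) \coloneqq \mathbb{P}(\mu = v)$, observe that $\sum_v q(v) = 1$ means $(q(v)^{1/\alpha})_v \in \Delta_{\alpha}^{n-1}$ on the nose, so $\sum_{\phi\in\mathrm{Inj}(Q,\mathcal{H})}\prod_{j\in\phi}q(v_j)^{1/\alpha} = P_{Q,\mathcal{H}}\bigl((q(v)^{1/\alpha})_v\bigr) \le \lambda_{\alpha,Q}(\mathcal{H})$, and then Jensen applied to $\log_2$ (equivalently the Gibbs/log-sum inequality) gives $\mathbb{H}(X_1,\ldots,X_q) - \tfrac{q}{\alpha}\mathbb{H}(\mu) = \mathbb{E}_\phi\bigl[\log_2\tfrac{\prod_j q(v_j)^{1/\alpha}}{p(\phi)}\bigr] \le \log_2\sum_\phi\prod_j q(v_j)^{1/\alpha} \le \log_2\lambda_{\alpha,Q}(\mathcal{H})$. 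This bounds \emph{every} random embedding directly. The paper instead takes the maximizing embedding, passes to $\mathrm{Aut}(Q)$-orbits, runs a Lagrange-multiplier argument on the resulting constrained optimization problem (equation~\eqref{equ:ratio-is-constant}), and reads off $\eta_{\alpha,Q}(\mathcal{H}) = \log_2\beta$ with $\beta = \sum_\phi\prod x_j$. Your argument avoids the optimization step entirely, needs no discussion of the support or boundary cases of the maximizer, and incidentally does not even use the $\mathrm{Aut}(Q)$-symmetry assumption~\ref{prop:random-embedding-b}. Two small corrections to your ``main obstacle'' discussion: the mixture $\mu$ being a genuine probability distribution on $V(\mathcal{H})$ follows from $\sum_v \mathbb{P}(X_i = v) = 1$ for each $i$, not from any special structure; and the claim that $\mathbb{P}(X_j = v)$ is independent of $j$ (so $\mu \stackrel{d}{=} X_1$) requires $\mathrm{Aut}(Q)$ to act transitively on $V(Q)$, which fails for, say, $Q$ a path — but your proof never actually uses this claim, only the mixture, so no harm is done.
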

\begin{proof}[Proof of Proposition~\ref{PROP:Lagrangian-Entropy}]
    Fix $\alpha \ge 1$. 
    Let $Q$ be an $r$-graph on the vertex set $[q]$ and $\mathcal{H}$ be an $r$-graph on the vertex set $[n]$. 
    We begin by establishing the following claim.

    \begin{claim}\label{CLAIM:entropy-difference-simplify}
        Let $\mathbf{x} = (x_1, \ldots, x_n) \in \mathbb{R}^{n}$ be a nonnegative vector. 
        Let $(X_1, \ldots, X_n)$ be the random embedding of $Q$ in $\mathcal{H}$ given by the distribution
        \begin{align}\label{equ:random-Q-prob-def}
            \mathbb{P}\left((X_1, \ldots, X_n) = (i_1, \ldots, i_q)\right)
            = \frac{x_{i_1} \cdots x_{i_q}}{\beta}
            \quad\text{for every}\quad (i_1, \ldots, i_q) \in \mathrm{Inj}(Q,\mathcal{H}), 
        \end{align}
        where $\beta \coloneqq \sum_{\phi \in \mathrm{Inj}(Q,\mathcal{H})} \prod_{j \in \phi} x_j$. 
        %
        Also, for every $j \in [n]$, let 
        \begin{align}
            y_j 
            \coloneqq \frac{1}{q} \frac{x_j}{\beta} \sum_{\phi \in \mathrm{Inj}(Q,\mathcal{H}, j)} \prod_{k\in \phi - j} x_k. \label{equ:random-yj-prob-def}
        \end{align}
        Then we have 
        \begin{align*}
            \mathbb{H}[(X_1, \ldots, X_{q})] - \frac{q}{\alpha} \mathbb{H}[\mu(X_1, \ldots, X_q)] 
            = \log_{2} \beta - \frac{q}{\alpha} \sum_{j \in [n]} y_j \log_{2}\left(\frac{x_j^{\alpha}}{y_j}\right). 
        \end{align*}
    \end{claim}
    \begin{proof}[Proof of Claim~\ref{CLAIM:entropy-difference-simplify}]
        By definition~\eqref{equ:random-Q-prob-def}, we have 
        \begin{align*}
            \mathbb{H}[(X_1, \ldots, X_{q})]
            & = \sum_{(i_1, \ldots, i_{q}) \in \mathrm{Inj}(Q,\mathcal{H})} - \frac{x_{i_1} \cdots x_{i_q}}{\beta} \log_{2} \frac{x_{i_1} \cdots x_{i_q}}{\beta} \\
            & = \sum_{(i_1, \ldots, i_{q}) \in \mathrm{Inj}(Q,\mathcal{H})} \frac{x_{i_1} \cdots x_{i_q}}{\beta} \left( \log_{2} \beta - \sum_{j\in [q]} \log_{2}x_{i_j} \right) \\
            & = \log_{2}\beta - \sum_{(i_1, \ldots, i_{q}) \in \mathrm{Inj}(Q,\mathcal{H})} \frac{x_{i_1} \cdots x_{i_q}}{\beta} \sum_{j\in [q]} \log_{2}x_{i_j} \\
            & = \log_{2}\beta - \sum_{j\in [n]} q y_{j} \log_{2} x_j. 
        \end{align*}
        For every $(i,j) \in [q] \times [n]$, let 
        \begin{align*}
             y_{i \to j}
            \coloneqq \frac{x_j}{\beta} \sum_{\phi \in \mathrm{Inj}(Q,\mathcal{H}, i \to j)} \prod_{k\in \phi - j} x_k. 
        \end{align*}
        Note that $P(X_i = j) = y_{i \to j}$ for every $(i,j) \in [q] \times [n]$ and $y_j = \sum_{i \in [q]}y_{i\to j}/q$ for every $j \in [n]$. 
        Therefore, we have 
        \begin{align*}
            \mathbb{H}[\mu(X_{1}, \ldots, X_q)]
            = - \sum_{j \in [n]} y_j \log_2 y_j, 
        \end{align*}
        and hence, 
        \begin{align*}
            & \quad \mathbb{H}\left[ (X_1, \ldots, X_{q}) \right] - \frac{q}{\alpha}  \mathbb{H}[\mu(X_{1}, \ldots, X_q)] \\
            & = \log_{2}\beta - \sum_{j\in [n]} q y_{j} \log_{2} x_j + \frac{q}{\alpha}\sum_{j \in [n]} y_j \log_2 y_j 
            = \log_{2} \beta - \frac{q}{\alpha} \sum_{j \in [n]} y_j \log_{2}\left(\frac{x_j^{\alpha}}{y_j}\right). 
        \end{align*}
        This proves Claim~\ref{CLAIM:entropy-difference-simplify}. 
    \end{proof}

    Now we prove that $\eta_{\alpha, Q}(\mathcal{H}) \le \lambda_{\alpha, Q}(\mathcal{H})$. 
    Fix a nonnegative optimal vector $\mathbf{x} = (x_1, \ldots, x_n) \in \mathrm{OPT}_{\alpha,Q}(\mathcal{H})$, whose existence is guaranteed by Fact~\ref{FACT:nonnegative-optimal-vector}. 
    It follows from the definition of $\mathrm{OPT}_{\alpha,Q}(\mathcal{H})$ that 
    \begin{align*}
        \beta
        \coloneqq \sum_{\phi \in \mathrm{Inj}(Q,\mathcal{H})} \prod_{j \in \phi} x_j
        = \lambda_{\alpha,Q}(\mathcal{H}). 
    \end{align*}
    It follows from the convexity of the function $\log_{2} \colon \mathbb{R}_{>0} \to \mathbb{R}$ that 
    \begin{align*}
        \sum_{j \in [n]} y_j \log_{2}\left(\frac{x_j^{\alpha}}{y_j}\right) 
        \le \log_{2}\left(\sum_{j \in [n]} y_j \cdot \frac{x_j^{\alpha}}{y_j}\right)
        = \log_{2} \left(\sum_{j \in [n]} x_j^{\alpha}\right)
        = \log_{2} 1
        = 0. 
    \end{align*}
    So, by Claim~\ref{CLAIM:entropy-difference-simplify}, we obtain 
    \begin{align*}
        \mathbb{H}\left[ (X_1, \ldots, X_{q}) \right] - \frac{q}{\alpha}  \mathbb{H}[\mu(X_{1}, \ldots, X_q)]
        \ge \log_{2}\beta
        = \log_{2} \lambda_{\alpha,Q}(\mathcal{H}). 
    \end{align*}
    which proves that $\eta_{\alpha, Q}(\mathcal{H}) \ge \lambda_{\alpha, Q}(\mathcal{H})$. 

    Now we consider the other direction. 
    Let $(X_1, \ldots, X_{q})$ be a random embedding of $Q$ in $\mathcal{H}$ that maximizes $\mathbb{H}[(X_1, \ldots, X_{q})] - \frac{q}{\alpha} \mathbb{H}[\mu(X_1, \ldots, X_q)]$. 
    %
    For every $\phi \in \mathrm{Inj}(Q, \mathcal{H})$, let 
    \begin{align*}
        p_{\phi}
        \coloneqq \mathbb{P}\left((X_1, \ldots, X_q) = \phi \right). 
    \end{align*}
    We say that two embeddings $\phi, \psi \in \mathrm{Inj}(Q,\mathcal{H})$ are equivalent, denoted $\phi \sim \psi$, if there exists an automorphism $\theta \in \mathrm{Aut}(Q)$ such that $\phi \circ \theta = \psi$. 
    It follows from Property~\ref{prop:random-embedding-b} that $p_{\phi} = p_{\psi}$ whenever $\phi\sim \psi$. 
    
    Let $\mathcal{N}(Q,\mathcal{H}) \coloneqq \mathrm{Inj}(Q,\mathcal{H})/\sim$ denote the set of equivalence classes in $\mathrm{Inj}(Q,\mathcal{H})$. 
    Let $[X_1, \ldots, X_q]$ denote the probability distribution of $(X_1, \ldots, X_q)$ induced on $\mathcal{N}(Q,\mathcal{H})$, that is, each equivalence class $[\phi] \in \mathcal{N}(Q,\mathcal{H})$, 
    \begin{align}\label{equ:def-q-phi}
        q_{\phi}
        \coloneqq \mathbb{P}\left([X_1, \ldots, X_q] = [\phi] \right)
        = \sum_{\psi \in [\phi]} \mathbb{P}\left((X_1, \ldots, X_q) = \psi\right) 
        = p_{\phi} |\mathrm{Aut}(Q)|. 
    \end{align}
    Then, we have  
    \begin{align}\label{equ:HX1-Xq-expression}
        \mathbb{H}\left[ (X_1, \ldots, X_q) \right]
        & = \mathbb{H}\left[ (X_1, \ldots, X_q) \mid [X_1, \ldots, X_q] \right] + \mathbb{H}\left[ [X_1, \ldots, X_q] \right] \notag \\
        & = \log_{2} |\mathrm{Aut}(Q)| - \sum_{[\phi] \in \mathcal{N}(Q,\mathcal{H})} q_{\phi} \log_{2} q_{\phi}. 
    \end{align}
    For each $j \in [n]$, define  
    \begin{align*}
        \mathcal{N}(Q, \mathcal{H}, j)
        \coloneqq \big\{  [\phi] \in \mathcal{N}(Q, \mathcal{H}) \colon j \in \phi \big\}, 
    \end{align*}
    and let 
    \begin{align}\label{equ:def-yj}
        y_j 
        \coloneqq \frac{1}{q} \sum_{\phi \in \mathrm{Inj}(Q,\mathcal{H}, j)}p_{\phi}
        = \frac{1}{q} \sum_{[\phi] \in \mathcal{N}(Q, \mathcal{H}, j)} q_{\phi}
        \quad\text{and}\quad 
        x_{j}
        \coloneqq  y_j^{\frac{1}{\alpha}}
        = \left(\frac{1}{q} \sum_{[\phi] \in \mathcal{N}(Q, \mathcal{H}, j)} q_{\phi}\right)^{1/\alpha}.  
    \end{align}
    Note that 
    \begin{align*}
        \mathbb{P}\left( \mu(X_1, \ldots, X_q) = j \right)
        = y_j
        \quad\text{for every}\quad j \in [n],  
    \end{align*}
    and thus, 
    \begin{align*}
        \mathbb{H}\left[\mu(X_1, \ldots, X_q)\right]
        = - \sum_{j\in [n]} y_j \log_{2} y_j 
        = - \sum_{j \in [n]} x_j^{\alpha} \log_{2} x_j^{\alpha}. 
    \end{align*}
    Combining it with~\eqref{equ:HX1-Xq-expression}, we conclude that the distribution $\mathbf{q} \coloneqq \left\{ q_{\phi} \colon [\phi] \in \mathcal{N}(Q, \mathcal{H}) \right\}$ maximizes the objective 
    \begin{align*}
        \Phi(\mathbf{q})
        \coloneqq 
        - \sum_{[\phi] \in \mathcal{N}(Q,\mathcal{H})} q_{\phi} \log_{2} q_{\phi} + \frac{q}{\alpha} \sum_{j \in [n]} x_{j}^{\alpha} \log_{2} x_{j}^{\alpha}, 
    \end{align*}
    subject to the constraints 
    \begin{align*}
        q_{\phi} \ge 0 ~\text{for every}~ [\phi] \in \mathcal{N}(Q,\mathcal{H})
        \quad\text{and}\quad 
        \sum_{[\phi] \in \mathcal{N}(Q,\mathcal{H})} q_{\phi} = 1. 
    \end{align*}
    It follows from the Lagrange multiplier method that 
    \begin{align*}
        -1 - \log_{2} q_{\phi} + \frac{1}{\alpha} \sum_{j \in \phi} \left(1+\log_{2} x_j^{\alpha}\right) 
        = -1+\frac{q}{\alpha} - \log_{2} \frac{q_{\phi}}{\prod_{j\in \phi} x_j}
    \end{align*}
    is constant for all $[\phi] \in \mathcal{N}(Q,\mathcal{H})$ with $q_{\phi} > 0$. 
    Equivalently, there exists a constant $\gamma$ such that 
    \begin{align}\label{equ:ratio-is-constant}
        \frac{q_{\phi}}{\prod_{j \in \phi} x_j} = \gamma
    \end{align}
    for all $[\phi] \in \mathcal{N}(Q,\mathcal{H})$ with $q_{\phi} > 0$.

    It follows from~\eqref{equ:def-q-phi} and~\eqref{equ:ratio-is-constant} that 
    \begin{align}\label{equ:p-phi}
        \mathbb{P}\left((X_1, \ldots, X_q) = \phi \right)
        = \frac{q_{\phi}}{|\mathrm{Aut}(Q)|}
        = \frac{\gamma}{|\mathrm{Aut}(Q)|}  \prod_{j \in \phi} x_j
        = \frac{1}{\beta} \prod_{j \in \phi} x_j,  
    \end{align}
    where $\beta \coloneqq |\mathrm{Aut}(Q)|/\gamma$. 
    
    Also, notice that 
    \begin{align*}
        \mathbb{P}\left(\mu(X_1, \ldots, X_q) = j \right) 
        = y_j 
        = \frac{1}{q}\sum_{\phi \in \mathrm{Inj}(Q,\mathcal{H}, j)}p_{\phi}
        = \frac{x_j}{q \beta}  \sum_{\phi \in \mathrm{Inj}(Q,\mathcal{H}, j)} \prod_{k \in \phi - j} x_k.
    \end{align*}
    So it follows from Claim~\ref{CLAIM:entropy-difference-simplify} and definition~\eqref{equ:def-yj} that 
    \begin{align}\label{equ:entropy-Q-lower-bound}
        \mathbb{H}[(X_1, \ldots, X_{q})] - \frac{q}{\alpha} \mathbb{H}[\mu(X_1, \ldots, X_q)] 
        & = \log_{2} \beta - \frac{q}{\alpha} \sum_{j \in [n]} y_j \log_{2}\left(\frac{x_j^{\alpha}}{y_j}\right) \notag \\
        & = \log_{2} \beta - \frac{q}{\alpha} \sum_{j \in [n]} y_j \log_{2} 1
        = \log_{2} \beta.  
    \end{align}
    Note from~\eqref{equ:p-phi} that the partition function $\beta$ satisfies 
    \begin{align*}
        \beta 
        = \sum_{\phi \in \mathrm{Inj}(Q, \mathcal{H})} \prod_{j \in \phi} x_j. 
    \end{align*}
    Also observe that  
    \begin{align*}
        \sum_{i\in [n]} x_i^{\alpha} = \sum_{i \in [n]} y_i = 1
        \quad\text{and}\quad 
        x_i \ge 0 ~\text{for every}~ i \in [n], 
    \end{align*}
    which imply that $(x_1, \ldots, x_n) \in \Delta_{\alpha}^{n-1}$. 
    So, it follows from the definition of $\lambda_{\alpha, Q}(\mathcal{H})$ that $\beta \le \lambda_{\alpha, Q}(\mathcal{H})$. 
    Combining this with~\eqref{equ:entropy-Q-lower-bound}, we conclude that $\eta_{\alpha, Q}(\mathcal{H}) \le \lambda_{\alpha, Q}(\mathcal{H})$. 
    This completes the proof of Proposition~\ref{PROP:Lagrangian-Entropy}. 
\end{proof}

\section{Concluding remarks}\label{SEC:remark}
Just as the theorem of Keevash--Lenz--Mubayi~\cite{KLM14} connects the {A}ndr{\'{a}}sfai--{E}rd{\H o}s--{S}{\'{o}}s-type stability of a Tur\'{a}n problem with its corresponding spectral Tur\'{a}n problem, Theorem~\ref{THM:generalized-spectral} establishes an analogous connection for generalized Tur\'{a}n problems and their spectral counterparts.
Extending the work~\cite{LMR23unified,HLZ24} on Tur\'{a}n problems, the type of stability required by Theorem~\ref{THM:generalized-spectral} was recently studied systematically in~\cite{CL24,CILLP24} for generalized Tur\'{a}n problems, and also investigated in~\cite{CHL24} for certain generalized Tur\'{a}n problems involving odd cycles.
Theorem~\ref{THM:C5-K3-spectral} provides just one application of Theorem~\ref{THM:generalized-spectral}, and we hope this work could motivate further research in this direction.

The entropy approach has been shown to be quite effective in solving certain Tur\'{a}n problems, as shown in works such as~\cite{CY24,L25}. It would be interesting to explore what this approach can produce in the context of the generalized Tur\'{a}n problem.
%
\bibliographystyle{alpha}
\bibliography{SpectralTuran}
\appendix
\section{Basic inequalities and lemmas}
In this appendix, we provide proofs of several inequalities used in the main arguments, as well as some basic lemmas analogous to those in~\cite{KLM14}. Although these lemmas are not directly used in the proofs, we include them for future research.

\begin{fact}\label{FACT:inequality-c}
    Suppose that $x > 1$ and $\beta > 0$. Then 
    \begin{align*}
        \left(\frac{x}{x-1}\right)^{\beta} 
        \ge 1 + \frac{\beta}{x}. 
    \end{align*}
\end{fact}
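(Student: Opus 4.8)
The plan is to reduce Fact~\ref{FACT:inequality-c} to a one-variable monotonicity statement via the substitution $y = 1/x$. Since $x > 1$ we have $y \in (0,1)$, and a direct computation gives $\frac{x}{x-1} = \frac{1}{1-y}$ and $\frac{\beta}{x} = \beta y$, so the asserted inequality is equivalent to $(1-y)^{-\beta} \ge 1 + \beta y$ for all $y \in (0,1)$ and all $\beta > 0$.

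To establish this, I would fix $\beta > 0$ and set $f(y) \coloneqq (1-y)^{-\beta} - 1 - \beta y$ on $[0,1)$. Then $f(0) = 0$, and differentiating gives $f'(y) = \beta (1-y)^{-\beta-1} - \beta = \beta\bigl((1-y)^{-\beta-1} - 1\bigr)$. Since $0 < 1-y < 1$ and $-\beta-1 < 0$, we have $(1-y)^{-\beta-1} \ge 1$, hence $f'(y) \ge 0$ throughout $[0,1)$. Therefore $f$ is nondecreasing, so $f(y) \ge f(0) = 0$ for every $y \in [0,1)$; reversing the substitution yields the claim. (One also sees en route that the inequality is strict for $x > 1$ when $\beta > 0$, with equality only in the limit $x \to \infty$, i.e. $y \to 0$.)

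There is essentially no serious obstacle here; the only point meriting a moment's care is verifying $(1-y)^{-\beta-1} \ge 1$, which is immediate from $0 < 1-y \le 1$. If one prefers to avoid calculus, the monotonicity step can be replaced by two cases: for $\beta \ge 1$, Bernoulli's inequality gives $\bigl(1+\tfrac{1}{x-1}\bigr)^{\beta} \ge 1 + \tfrac{\beta}{x-1} \ge 1 + \tfrac{\beta}{x}$; and for $0 < \beta < 1$, expanding $(1-y)^{-\beta} = \sum_{k \ge 0} \binom{\beta+k-1}{k} y^{k}$ with all coefficients nonnegative already yields $(1-y)^{-\beta} \ge 1 + \beta y$. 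I would nonetheless present the single monotonicity argument as the main proof, since it treats all $\beta > 0$ uniformly and needs no series-convergence bookkeeping.
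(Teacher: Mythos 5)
Your proof is correct and takes a genuinely different route from the paper. The paper works directly with $f(x) = \left(\frac{x}{x-1}\right)^{\beta} - \left(1 + \frac{\beta}{x}\right)$ on the unbounded interval $(1,\infty)$, shows $f$ is decreasing there, and concludes $f(x) \ge \lim_{x\to\infty} f(x) = 0$; the argument relies on evaluating a limit at the far end of the domain. Your substitution $y = 1/x$ compactifies the problem: the inequality becomes $(1-y)^{-\beta} \ge 1 + \beta y$ on $(0,1)$, which you anchor at the concrete point $y=0$ (where equality holds) and push forward by checking $f'(y) = \beta\bigl((1-y)^{-\beta-1}-1\bigr) \ge 0$. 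Beyond being slightly cleaner --- no limit needed, just an evaluation --- your substitution reveals something the paper leaves implicit: the transformed inequality is exactly Fact~\ref{FACT:inequality-a}, so Facts~\ref{FACT:inequality-a} and~\ref{FACT:inequality-c} are the same statement under $y = 1/x$, and after the change of variables you could simply cite Fact~\ref{FACT:inequality-a} rather than reprove it (the paper proves the two independently, each with its own short calculus argument). Your Bernoulli and binomial-series alternatives for the two regimes of $\beta$ are also fine, though as you note the single monotonicity argument is preferable. Incidentally, the paper's displayed derivative has a small sign typo: the parenthesized factor should be $x\left(\frac{x}{x-1}\right)^{\beta} - x + 1$ rather than $x\left(\frac{x}{x-1}\right)^{\beta} + x - 1$; since both quantities are positive for $x>1$, $\beta>0$, the conclusion $f'(x)<0$ is unaffected, but it is worth flagging.
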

\begin{proof}[Proof of Fact~\ref{FACT:inequality-c}]
    Fix $\beta > 0$ and let 
    \begin{align*}
        f(x) \coloneqq \left(\frac{x}{x-1}\right)^{\beta} - \left( 1 + \frac{\beta}{x} \right). 
    \end{align*}
    Straightforward calculations show that
    \begin{align*}
        \frac{\mathrm{d} f(x)}{\mathrm{d} x}
        = -\beta  \left(x \left(\frac{x}{x-1}\right)^{\beta }+x-1\right){(x-1)^{-1} x^{-2}}.  
    \end{align*}
    It follows that $f(x)$ is decreasing on the interval $(1, \infty)$. 
    Therefore, $f(x) \ge \lim_{x \to \infty} f(x) = 0$, which implies the desired inequality.
\end{proof}

\begin{fact}\label{FACT:inequality-a}
    Suppose that $x\in [0,1]$ and $\beta \ge 0$. Then 
    $\left(1-x\right)^{-\beta} \ge 1 + \beta x$. 
\end{fact}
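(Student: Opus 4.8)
The plan is to follow the template of Fact~\ref{FACT:inequality-c}: fix $\beta \ge 0$ and introduce the auxiliary function $f(x) \coloneqq (1-x)^{-\beta} - (1+\beta x)$ on the half-open interval $[0,1)$. First I would record the base point $f(0) = 1 - 1 = 0$. Then I would differentiate, obtaining $f'(x) = \beta(1-x)^{-\beta-1} - \beta = \beta\big((1-x)^{-\beta-1} - 1\big)$. For $x \in [0,1)$ we have $0 < 1-x \le 1$, and the exponent $-\beta-1$ is nonpositive, so $(1-x)^{-\beta-1} \ge 1$; hence $f'(x) \ge 0$ throughout $[0,1)$. Thus $f$ is nondecreasing on $[0,1)$, which gives $f(x) \ge f(0) = 0$, i.e.\ $(1-x)^{-\beta} \ge 1 + \beta x$ for every $x \in [0,1)$.

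It then remains to handle the endpoint $x = 1$. If $\beta > 0$, the left-hand side $(1-x)^{-\beta}$ is $+\infty$, which certainly exceeds $1 + \beta$; if $\beta = 0$, both sides equal $1$. So the inequality holds at $x = 1$ as well, and the proof is complete.

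A cleaner alternative, which I would mention in passing, is to invoke convexity directly: the map $t \mapsto t^{-\beta}$ is convex on $(0,\infty)$ whenever $\beta \ge 0$ (its second derivative $\beta(\beta+1)t^{-\beta-2}$ is nonnegative), so it lies above its tangent line at $t = 1$, namely $t^{-\beta} \ge 1 - \beta(t-1)$; substituting $t = 1-x$ yields $(1-x)^{-\beta} \ge 1 + \beta x$ at once. This is essentially the Bernoulli inequality for nonpositive exponents. Either route is entirely elementary, so I do not anticipate any genuine obstacle; the only point deserving a word of care is the interpretation of the inequality at $x = 1$.
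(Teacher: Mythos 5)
Your argument is correct, but you take a slightly different route from the paper. The paper fixes $\beta > 0$ (dispatching $\beta = 0$ as trivial) and works with the \emph{product} form
\begin{align*}
    f(x) \coloneqq (1+\beta x)(1-x)^{\beta} - 1,
\end{align*}
showing $f'(x) = -\beta(\beta+1)x(1-x)^{\beta-1} \le 0$ on $[0,1]$, hence $f(x) \le f(0) = 0$. This formulation has the small advantage that $f$ is finite and continuous on the entire closed interval $[0,1]$, so the endpoint $x=1$ needs no special treatment: $(1-x)^{\beta} \to 0$ tames the expression. You instead differentiate the \emph{difference} form $(1-x)^{-\beta} - (1+\beta x)$ directly, which forces you to work on $[0,1)$ and then argue the boundary case $x=1$ separately (interpreting $(1-x)^{-\beta}$ as $+\infty$); you do this correctly, but it is an extra case the paper's rearrangement avoids. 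Your passing remark that the inequality is just convexity of $t \mapsto t^{-\beta}$ evaluated against its tangent at $t=1$ is arguably the cleanest route of all and would have served equally well as the main proof. In short: both are valid elementary arguments; the paper's product form sidesteps the singularity, your convexity observation gives the conceptual explanation.
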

\begin{proof}[Proof of Fact~\ref{FACT:inequality-a}]
    Fix $\beta > 0$ (note that the case $\beta = 0$ is trivial) and let 
    \begin{align*}
        f(x)
        \coloneqq (1+\beta x)(1-x)^{\beta} - 1. 
    \end{align*}
    Straightforward calculations show that
    \begin{align*}
        \frac{\mathrm{d} f(x)}{\mathrm{d} x}
        = - \beta  (\beta +1) x (1-x)^{\beta -1}. 
    \end{align*}
    It follows that $f(x)$ is decreasing on the interval $[0,1]$. 
    Therefore, $f(x) \le f(0) = 0$, which implies the desired inequality.
\end{proof}

\begin{fact}\label{FACT:inequality-b}
    Suppose that $x\in [0,1/2]$. Then 
    $1-x \ge e^{-x-x^2}$. 
\end{fact}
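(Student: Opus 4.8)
The plan is to reduce the inequality to a statement about logarithms and then run a monotonicity argument, exactly in the spirit of the proofs of Facts~\ref{FACT:inequality-c} and~\ref{FACT:inequality-a}. Since $x \in [0, 1/2]$, we have $1 - x \ge 1/2 > 0$, so both sides of the claimed inequality $1-x \ge e^{-x-x^2}$ are strictly positive, and it is legitimate to take logarithms. Thus it suffices to prove that $\ln(1-x) \ge -x - x^2$ for all $x \in [0, 1/2]$.

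To do this I would introduce the auxiliary function $g(x) \coloneqq \ln(1-x) + x + x^2$, observe that $g(0) = 0$, and differentiate. A short computation gives
\begin{align*}
    \frac{\mathrm{d} g(x)}{\mathrm{d} x}
    = \frac{-1}{1-x} + 1 + 2x
    = \frac{x(1-2x)}{1-x}.
\end{align*}
For $x \in [0, 1/2]$ each of the factors $x$, $1-2x$, and $1-x$ is nonnegative (the last strictly positive), so $g'(x) \ge 0$ on this interval. Hence $g$ is nondecreasing on $[0, 1/2]$, which yields $g(x) \ge g(0) = 0$, i.e. $\ln(1-x) \ge -x - x^2$. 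Exponentiating both sides recovers $1 - x \ge e^{-x-x^2}$, as desired.

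There is essentially no obstacle here; the only mild points of care are to note that $1 - x > 0$ on $[0, 1/2]$ so that the logarithm and the function $g$ are well defined, and to check the sign of $g'$ at the endpoints of the closed interval (where one of the factors vanishes but the product is still $\ge 0$). An alternative route would be to use the power series $-\ln(1-x) = \sum_{k \ge 1} x^k/k$ and bound the tail $\sum_{k \ge 2} x^k/k$ by $x^2$ for $x \le 1/2$, but the derivative argument above is cleaner and matches the style of the surrounding facts.
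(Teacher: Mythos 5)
Your proof is correct and takes essentially the same approach as the paper: the paper differentiates $f(x) = (1-x)e^{x+x^2}$ and finds $f'(x) = x(1-2x)e^{x^2+x} \ge 0$ on $[0,1/2]$, while you differentiate $g(x) = \ln f(x)$, obtaining the same factorization $x(1-2x)$ up to a positive factor. The logarithmic reformulation is a cosmetic change, not a different argument.
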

\begin{proof}[Proof of Fact~\ref{FACT:inequality-b}]
    Let $f(x) \coloneqq (1-x)e^{x+x^2}$. Straightforward calculations show that 
    \begin{align*}
        \frac{\mathrm{d} f(x)}{\mathrm{d} x} = x(1-2x)e^{x^2+x}. 
    \end{align*}
    It follows that $f(x)$ is increasing on the interval $[0, 1/2]$. 
    Therefore, $f(x) \ge f(0) = 1$, which implies the desired inequality.
\end{proof}

Let $\mathcal{H}$ be an $r$-graph on the vertex set $[n]$. Two vertices $i$ and $j$ are said to be \textbf{equivalent} in $\mathcal{H}$ if the transposition $(ij)$ is an automorphism of $\mathcal{H}$, that is, if the map $\phi$ defined by 
\begin{align*}
    \phi(k)
    = 
    \begin{cases}
        j, &\quad\text{if}\quad k = i, \\
        i, &\quad\text{if}\quad k = j, \\
        k, &\quad\text{if}\quad k \in [n]\setminus\{i,j\}, \\
    \end{cases}
\end{align*}
is an automorphism of $\mathcal{H}$.

\begin{lemma}\label{LEMMA:KLM-transposition-extend}
    Let $\alpha \ge 1$ be a real number and $Q$ be an $r$-graph. 
    Let $\mathcal{H}$ be an $r$-graph on the vertex set $[n]$. 
    Suppose that vertices $i$ and $j$ are equivalent in $\mathcal{H}$. 
    Let $\mathbf{x} = (x_1, \ldots, x_n) \in \Delta^{n-1}_{\alpha}$.
    Define $\mathbf{y} = (y_1, \ldots, y_n) \in \Delta^{n-1}_{\alpha}$ by
    \begin{align}\label{equ:symm-vector}
        y_k 
        =
        \begin{cases}
            \left(\frac{x_i^{\alpha} + x_j^{\alpha}}{2}\right)^{1/\alpha}, & \quad\text{if}\quad k \in \{i,j\}, \\
            x_k, & \quad\text{if}\quad k \in [n] \setminus \{i,j\}.
        \end{cases}
    \end{align}
    Then 
    \begin{align*}
        P_{Q,\mathcal{H}}(\mathbf{y})
        \ge P_{Q,\mathcal{H}}(\mathbf{x}). 
    \end{align*}
\end{lemma}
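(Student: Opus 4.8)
The plan is to run the classical symmetrization argument, exploiting the multilinearity of $P_{Q,\mathcal{H}}$ together with the symmetry forced by the automorphism $(ij)$, and then reduce the whole claim to two elementary scalar inequalities. First I would record that $P_{Q,\mathcal{H}}$ is multilinear: for every $\phi\in\mathrm{Inj}(Q,\mathcal{H})$ the image $\phi(V(Q))$ is a set of $q$ \emph{distinct} vertices of $\mathcal{H}$, so each monomial $\prod_{k\in\phi(V(Q))}X_k$ is squarefree. Hence, splitting $\mathrm{Inj}(Q,\mathcal{H})$ according to whether $\phi(V(Q))$ contains neither of $i,j$, only $i$, only $j$, or both, we may write
\[
P_{Q,\mathcal{H}}(X_1,\dots,X_n)=A+B\,X_i+C\,X_j+D\,X_iX_j,
\]
where $A,B,C,D$ are polynomials in the remaining variables $(X_k)_{k\neq i,j}$ with nonnegative integer coefficients. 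Since $\mathbf{x}$ is nonnegative (as is implicit in the definition of $\mathbf{y}$), evaluating these at the common tail $(x_k)_{k\neq i,j}$ shared by $\mathbf{x}$ and $\mathbf{y}$ yields nonnegative scalars $a,b,c,d$.

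Next I would use the hypothesis that the transposition $\sigma=(ij)$ is an automorphism of $\mathcal{H}$. Composition $\phi\mapsto\sigma\circ\phi$ is a bijection of $\mathrm{Inj}(Q,\mathcal{H})$ that carries the ``contains $i$, not $j$'' family onto the ``contains $j$, not $i$'' family; moreover, because $\sigma$ moves only $i$ and $j$ and $j$ is absent from $\phi(V(Q))$ in this family, one checks that $(\sigma\phi)(V(Q))\setminus\{j\}=\sigma\big(\phi(V(Q))\setminus\{i\}\big)=\phi(V(Q))\setminus\{i\}$, so the associated monomial is preserved verbatim. Therefore $B=C$ as polynomials, hence $b=c$, and using $y_i=y_j$,
\[
P_{Q,\mathcal{H}}(\mathbf{x})=a+b(x_i+x_j)+d\,x_ix_j,\qquad P_{Q,\mathcal{H}}(\mathbf{y})=a+2b\,y_i+d\,y_i^2 .
\]
Subtracting gives $P_{Q,\mathcal{H}}(\mathbf{y})-P_{Q,\mathcal{H}}(\mathbf{x})=b\,(2y_i-x_i-x_j)+d\,(y_i^2-x_ix_j)$, so since $b,d\ge 0$ it suffices to prove the two scalar inequalities $2y_i\ge x_i+x_j$ and $y_i^2\ge x_ix_j$.

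Both follow from the power-mean inequality with exponent $\alpha\ge 1$: writing $y_i=((x_i^\alpha+x_j^\alpha)/2)^{1/\alpha}$, monotonicity of power means in the exponent gives $y_i=M_\alpha(x_i,x_j)\ge M_1(x_i,x_j)=(x_i+x_j)/2$, which is the first inequality; AM--GM (valid as $x_i,x_j\ge 0$) then gives $(x_i+x_j)/2\ge\sqrt{x_ix_j}$, so $y_i^2\ge((x_i+x_j)/2)^2\ge x_ix_j$, the second. Combining the displays yields $P_{Q,\mathcal{H}}(\mathbf{y})\ge P_{Q,\mathcal{H}}(\mathbf{x})$. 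I do not expect a genuine obstacle: the only step that needs care is the bookkeeping in the first two paragraphs, namely writing the decomposition $A+BX_i+CX_j+DX_iX_j$ precisely as sums over $\mathrm{Inj}(Q,\mathcal{H})$ and verifying that the automorphism-induced bijection identifies the monomials of $B$ with those of $C$ \emph{term by term} (rather than merely up to relabeling); once that identification is in place, the conclusion is immediate from the two power-mean estimates.
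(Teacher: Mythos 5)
Your proof is correct and essentially identical to the paper's: the decomposition $P_{Q,\mathcal{H}}=A+BX_i+CX_j+DX_iX_j$ corresponds to the paper's split of $\mathrm{Inj}(Q,\mathcal{H})$ into the families $C_i$, $C_j$, $C_{ij}$ (plus the inert part), your identification $B=C$ via the automorphism $(ij)$ is precisely what the paper abbreviates as ``$C_i=C_j$'', and both conclude with the same two power-mean inequalities. You are in fact slightly more careful than the paper in spelling out that the bijection $\phi\mapsto\sigma\circ\phi$ preserves monomials term by term and in flagging the implicit nonnegativity of $\mathbf{x}$ needed to close the sign argument.
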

\begin{proof}[Proof of Lemma~\ref{LEMMA:KLM-transposition-extend}]
    Since $\alpha \ge 1$, the Power Mean Inequality implies that 
    \begin{align*}
        y_i = y_j \ge \sqrt{x_i x_j}
        \quad\text{and}\quad 
        y_i + y_j \ge x_i + x_j. 
    \end{align*}
    Let 
    \begin{align*}
        C_{i}
        & \coloneqq \left\{\phi \in \mathrm{Inj}(Q,\mathcal{H}) \colon \text{$i \in \phi(V(Q))$ but $j \not\in \phi(V(Q))$} \right\}, \\[0.3em]
        C_{j}
        & \coloneqq \left\{\phi \in \mathrm{Inj}(Q,\mathcal{H}) \colon \text{$j \in \phi(V(Q))$ but $i \not\in \phi(V(Q))$} \right\}, \\[0.3em]
        C_{ij}
        & \coloneqq \left\{\phi \in \mathrm{Inj}(Q,\mathcal{H}) \colon \text{$i \in \phi(V(Q))$ and $j \in \phi(V(Q))$} \right\}.
    \end{align*}
    By definition, we have 
    \begin{align*}
        P_{Q,\mathcal{H}}(\mathbf{y}) - P_{Q,\mathcal{H}}(\mathbf{x})
        & = \sum_{\phi \in C_i} \left(y_i - x_i\right) \prod_{k \in \phi - i} x_k \\
        & \quad  +  \sum_{\phi \in C_j} \left(y_j - x_j\right) \prod_{k \in \phi - j} x_k 
          +  \sum_{\phi \in C_{ij}} \left(y_iy_j - x_ix_j\right) \prod_{k \in \phi - \{i,j\}} x_k. 
    \end{align*}
    Since $i$ and $j$ are equivalent in $\mathcal{H}$, we have $C_i = C_j$. 
    It follows that 
    \begin{align*}
        P_{Q,\mathcal{H}}(\mathbf{y}) - P_{Q,\mathcal{H}}(\mathbf{x})
        & = \sum_{\phi \in C_i} \left(y_i + y_j - x_i - x_j\right) \prod_{k \in \phi - i} x_k \\
        & \quad  +  \sum_{\phi \in C_{ij}} \left(y_iy_j - x_ix_j\right) \prod_{k \in \phi - \{i,j\}} x_k
        \ge 0,
    \end{align*}
    which proves Lemma~\ref{LEMMA:KLM-transposition-extend}. 
\end{proof}

\begin{lemma}\label{LEMMA:KLM-equivalence-classes-extend}
    Let $\alpha \ge 1$ be a real number. Let $\mathcal{H}$ be an $r$-graph on $[n]$, and let $P_1 \cup \cdots \cup P_k = [n]$ be a partition such that each $P_i$ is an equivalence class in $\mathcal{H}$. Then there exists a vector $\mathbf{x} = (x_1, \ldots, x_n) \in \mathrm{OPT}_{\alpha, Q}(\mathcal{H})$ such that $x_i = x_j$ whenever $i$ and $j$ lie in the same equivalence class.
\end{lemma}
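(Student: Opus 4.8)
The plan is to reduce the statement to a single application of Lemma~\ref{LEMMA:KLM-transposition-extend} together with a compactness argument. First I would check that the relation ``$(ij)\in\mathrm{Aut}(\mathcal{H})$'' on $[n]$ is genuinely an equivalence relation: reflexivity and symmetry are immediate, and transitivity holds because $(ij),(j\ell)\in\mathrm{Aut}(\mathcal{H})$ forces $(i\ell)=(ij)(j\ell)(ij)\in\mathrm{Aut}(\mathcal{H})$. Consequently the partition $P_1\cup\cdots\cup P_k=[n]$ into equivalence classes is well defined, and any two vertices lying in a common class $P_t$ are joined by a transposition that is an automorphism of $\mathcal{H}$; that is, they are equivalent in the precise sense required by Lemma~\ref{LEMMA:KLM-transposition-extend}.

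Next I would invoke Fact~\ref{FACT:nonnegative-optimal-vector} to see that the set $S$ of nonnegative vectors in $\mathrm{OPT}_{\alpha,Q}(\mathcal{H})$ is nonempty; it is also closed in the compact set $\Delta_{\alpha}^{n-1}$, hence compact. I would then pick $\mathbf{x}=(x_1,\dots,x_n)\in S$ minimizing the continuous functional $\Psi(\mathbf{x})\coloneqq\sum_{i\in[n]}x_i^{2\alpha}$ over $S$, and claim that this $\mathbf{x}$ has the asserted property.

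To prove the claim, suppose for contradiction that $x_i\neq x_j$ for some $i,j$ in a common class $P_t$. By the first paragraph the vertices $i$ and $j$ are equivalent in $\mathcal{H}$, so Lemma~\ref{LEMMA:KLM-transposition-extend} applies to $\mathbf{x}$ and produces the vector $\mathbf{y}\in\Delta_{\alpha}^{n-1}$ of~\eqref{equ:symm-vector}, which is nonnegative and satisfies $P_{Q,\mathcal{H}}(\mathbf{y})\ge P_{Q,\mathcal{H}}(\mathbf{x})=\lambda_{\alpha,Q}(\mathcal{H})$. Since $\lambda_{\alpha,Q}(\mathcal{H})$ is the maximum of $P_{Q,\mathcal{H}}$ over $\Delta_{\alpha}^{n-1}$, equality must hold, so $\mathbf{y}\in S$. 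But $y_i^{\alpha}=y_j^{\alpha}=\tfrac12(x_i^{\alpha}+x_j^{\alpha})$ with $x_i^{\alpha}\neq x_j^{\alpha}$, so strict convexity of $t\mapsto t^{2}$ gives $y_i^{2\alpha}+y_j^{2\alpha}=2\big(\tfrac12(x_i^{\alpha}+x_j^{\alpha})\big)^{2}<x_i^{2\alpha}+x_j^{2\alpha}$, while $y_k=x_k$ for every $k\notin\{i,j\}$; hence $\Psi(\mathbf{y})<\Psi(\mathbf{x})$, contradicting the minimality of $\mathbf{x}$. Therefore $x_i=x_j$ whenever $i$ and $j$ lie in the same equivalence class, which is exactly the assertion of Lemma~\ref{LEMMA:KLM-equivalence-classes-extend}.

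I do not anticipate a real obstacle here: the only points that need care are the transitivity check that makes the ``equivalence classes'' in the statement meaningful (and shows that every within-class transposition is an automorphism) and the verification that $S$ is nonempty and compact so that a minimizer of $\Psi$ exists. A compactness-free alternative is also available: because every within-class transposition is an automorphism of $\mathcal{H}$, the multilinear polynomial $P_{Q,\mathcal{H}}$ is symmetric in the variables indexed by each class $P_t$, hence a nonnegative-coefficient polynomial combination of the elementary symmetric polynomials in those variables; replacing all variables of $P_t$ by their common $\alpha$-power mean keeps the vector in $\Delta_{\alpha}^{n-1}$ and does not decrease any elementary symmetric polynomial (by the power mean inequality together with Maclaurin's inequality), hence does not decrease $P_{Q,\mathcal{H}}$, and one processes the classes $P_1,\dots,P_k$ one at a time.
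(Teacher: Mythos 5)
Your proof is correct and follows the same overall strategy as the paper's: pick a nonnegative optimal vector minimizing a suitable potential, and if two coordinates in a common equivalence class differ, symmetrize them via Lemma~\ref{LEMMA:KLM-transposition-extend} to remain in $\mathrm{OPT}_{\alpha,Q}(\mathcal{H})$ while strictly decreasing the potential. The one genuine difference is the potential itself. The paper minimizes $\Phi(\mathbf{x}) = \sum_{j}\sum_{i\in P_j}\lvert x_i - \overline{x}_{P_j}\rvert$, where $\overline{x}_{P_j}$ is the per-class $\alpha$-power mean, whereas you minimize $\Psi(\mathbf{x}) = \sum_i x_i^{2\alpha}$. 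Your choice is cleaner: the strict decrease under the symmetrization~\eqref{equ:symm-vector} is immediate from strict convexity of $t\mapsto t^2$ applied to the $\alpha$-th powers, and it does not depend on which pair $\{i,j\}$ in the class you pick. With the paper's $\Phi$, the class mean is preserved (since $y_i^{\alpha}+y_j^{\alpha}=x_i^{\alpha}+x_j^{\alpha}$), but a strict decrease requires choosing $i,j$ on opposite sides of $\overline{x}_{P_\ell}$, and even then the verification is a little fiddly when $\alpha>1$ because $2y_i\ge x_i+x_j$; your $\Psi$ sidesteps this entirely. You also supply explicitly the compactness argument (nonnegative optimal vectors form a nonempty closed subset of the compact set $\Delta_{\alpha}^{n-1}$) that the paper leaves implicit, and your transitivity check that within-class transpositions generate an equivalence relation is a correct and worthwhile remark. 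The compactness-free alternative via Maclaurin's inequality is also sound, though more than is needed here.
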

\begin{proof}[Proof of Lemma~\ref{LEMMA:KLM-equivalence-classes-extend}]
    Choose $\mathbf{x} = (x_1, \ldots, x_n) \in \mathrm{OPT}_{\alpha, Q}(\mathcal{H})$ such that 
    \begin{align*}
        \Phi(\mathbf{x})
        \coloneqq \sum_{j\in [k]} \sum_{i \in P_j} \left| x_i - \overline{x}_{P_j}\right|
    \end{align*}
    is minimized, where 
    \begin{align*}
        \overline{x}_{P_j}
        \coloneqq \left(|P_j|^{-1} \sum_{i \in P_j} x_i^{\alpha}\right)^{1/\alpha}. 
    \end{align*}
    Note that it suffices to show that $\Phi(\mathbf{x}) = 0$. Suppose to the contrary that this fails. 
    Then there exists some $P_{\ell}$ such that $x_i \neq x_j$ for some $\{i,j\} \subseteq P_{\ell}$. 
    Let $\mathbf{y} = (y_1, \ldots, y_n)$ be defined as in~\eqref{equ:symm-vector}. 
    It follows from Lemma~\ref{LEMMA:KLM-transposition-extend} that $\mathbf{y} \in \mathrm{OPT}_{\alpha, Q}(\mathcal{H})$. 
    However, $\Phi(\mathbf{y}) < \Phi(\mathbf{x})$, which contradicts the choice of $\mathbf{x}$.
\end{proof}

\end{document}